\documentclass[11pt]{amsart}
\usepackage{amsmath}
\usepackage{amsthm}
\usepackage{amssymb}
\usepackage{amscd}
\usepackage{hyperref}
\usepackage{tikz}

\usepackage{enumerate}
\usetikzlibrary{matrix,arrows}

\setlength{\textwidth}{6.5in} \setlength{\textheight}{8.5 in}
\setlength{\oddsidemargin}{0.0 in}
\setlength{\evensidemargin}{\oddsidemargin}
\hfuzz2pt 
\vfuzz1.5pt


\newcommand{\Spec}{\operatorname{Spec}}
\newcommand{\length}{\ell}
\newcommand{\loewy}{\ell\ell}

\newcommand{\eh}{\operatorname{e}}

\newcommand{\mf}{\mathfrak}

\DeclareMathOperator{\ord}{ord}

\DeclareMathOperator{\fiber}{\mathfrak F}
\DeclareMathOperator{\lec}{lech}

\DeclareMathOperator{\thj}{c}

\newcommand{\sO}{\mathcal O}

\newcommand{\m}{\mf m}
\newcommand\ceil[1]{\lceil#1\rceil}
\newcommand\bceil[1]{\Bigl\lceil#1\Bigr\rceil}

\newtheorem{theorem}{Theorem}
\newtheorem{lemma}[theorem]{Lemma}
\newtheorem{proposition}[theorem]{Proposition}
\newtheorem{corollary}[theorem]{Corollary}

\newtheorem{conjecture}[theorem]{Conjecture}
\newtheorem*{statement*}{Statement}
\newtheorem*{theorem*}{Theorem}
\newtheorem*{lemma*}{Lemma}
\newtheorem*{fact*}{Fact}
\newtheorem{question}[theorem]{Question}

\theoremstyle{definition}
\newtheorem{definition}[theorem]{Definition}
\newtheorem*{definition*}{Definition}
\newtheorem{example}[theorem]{Example}
\newtheorem*{example*}{Example}
\newtheorem{inequality}[theorem]{Inequality}

\theoremstyle{remark}
\newtheorem{remark}[theorem]{Remark}
\newtheorem{observation}[theorem]{Observation}
\newtheorem*{acknowledgement}{Acknowledgments}

\numberwithin{theorem}{section}
\numberwithin{equation}{section}

\begin{document}

\title[The multiplicity and the number of generators of integrally closed ideals]
{The multiplicity and the number of generators of an integrally closed ideal}
\author{Hailong Dao}
\address{Department of Mathematics\\
University of Kansas\\
 Lawrence, KS 66045-7523 USA}
\email{hdao@math.ku.edu}

\author{Ilya Smirnov}
\address{
Department of Mathematics\\
University of Michigan\\
Ann Arbor, MI 48109, USA
}
\email{ismirnov@umich.edu}

\maketitle

\begin{abstract}
Let $(R, \mf m)$ be a Noetherian local ring and $I$ a $\m$-primary ideal. In this paper, we study an inequality involving 
the number of generators, the Loewy length and the multiplicity of $I$. There is strong evidence that the inequality holds for all integrally closed ideals of finite
colength if and only if $\Spec R$ has sufficiently nice singularities. We verify the inequality for regular local rings in all dimensions, for rational singularity in dimension $2$, and cDV 
singularities in dimension $3$. In addition, we can classify when the inequality always hold for a Cohen-Macaulay $R$ of dimension at most two.  We also discuss relations to various topics:  classical results on rings with minimal multiplicity and rational singularities, the recent work on $p_g$ ideals by Okuma-Watanabe-Yoshida, a conjecture of Huneke, Musta\c{t}\v{a}, Takagi, and Watanabe on $F$-threshold,  multiplicity of the fiber cone, and  the $h$-vector of the associated graded ring. 
\end{abstract}

\section{Introduction}

Let $(R,\mf m)$ be a Noetherian local ring of dimension $d\geq 1$ and $I$ be an integrally closed, $\m$-primary ideal. Let $\mu(I)$ and $\eh(I)$ be the minimal number of generators and the Hilbert-Samuel multiplicity of $I$, respectively. 
With a slight abuse of notation we will use $\loewy(I)$ to denote the Loewy length of the artinian ring $R/I$, i.e., 
the smallest integer $n$ such that $\mf m^n \subseteq I$.

The purpose of this note is to study the following simple inequality
\begingroup
\newcounter{tmp}
\setcounter{tmp}{\value{theorem}}
\setcounter{theorem}{0} 
\renewcommand\thetheorem{A}
\begin{inequality}\label{ineq}
\[
(d-1)!(\mu(I) - d + 1) \loewy(I)  \geq \eh(I)
\]
\end{inequality}
\endgroup
\setcounter{theorem}{\thetmp}
that connects these invariants of $I$. 

It is easy to see that when one takes $I=\m^c$ and lets $c$ goes to infinity, the ratio between left side and right side in the above inequality goes to $1$. So asymptotically, this is the best we can hope for. On the other hand, it is clear that inequality \ref{ineq} does not always hold. For example, when $1\leq d\leq 2$ and $I=\m$, the inequality implies that $R$ has minimal multiplicity.

As our first main result, we show that inequality~\ref{ineq} holds in every regular local ring. 
To do so, in Theorem~\ref{general mixed} we establish a Lech-type inequality on
the number of generators which is of independent interest. 

\begin{theorem}\label{mainTheorem}
Let $(R,\m)$ be a local ring of dimension $d\geq 1$. 
Let  $I$ be an integrally closed $\m$-primary  ideal.
Then
\begin{enumerate}
\item for a general element $x \in \m$, we have
$
(d-1)!\eh(R) (\mu(I) - d + 1) \geq \eh(IR/(x)), 
$
\item
$
(d-1)!\eh(R) \loewy(I) (\mu(I) - d + 1) \geq \eh(I).
$
\end{enumerate}
 In particular, inequality \ref{ineq} always holds if $R$ is regular.
\end{theorem}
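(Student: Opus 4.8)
The plan is to establish the two numbered inequalities and then set $\eh(R)=1$. I will assume the residue field $k=R/\m$ is infinite, which is harmless after the faithfully flat extension $R\to R(t)=R[t]_{\m R[t]}$, since this preserves $\mu(I)$, $\loewy(I)$, $\eh(I)$, $\eh(R)$ and integral closedness. Fix a general element $x\in\m$ and write $\overline{R}=R/(x)$, of dimension $d-1$, together with $\overline{I}=(I+(x))/(x)$ for the image of $I$; since $x$ is superficial for $\m$ one has $\eh(\overline{R})=\eh(R)$. Part~(1) and part~(2) are logically separate: the first is a colength estimate coming from integral closedness, the second is a comparison of Hilbert--Samuel multiplicities along the hyperplane section. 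Granting part~(2), the final assertion is immediate, because a regular ring has $\eh(R)=1$, so the inequality in~(2) becomes exactly Inequality~\ref{ineq}.

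For part~(1) I would first invoke Lech's inequality in the $(d-1)$-dimensional ring $\overline{R}$,
\[
\eh(\overline{I})\le (d-1)!\,\eh(\overline{R})\,\length(\overline{R}/\overline{I})=(d-1)!\,\eh(R)\,\length(\overline{R}/\overline{I}),
\]
which reduces the claim to the colength bound $\length(\overline{R}/\overline{I})\le \mu(I)-d+1$. The short exact sequence $0\to R/(I:x)\xrightarrow{\ x\ }R/I\to \overline{R}/\overline{I}\to 0$ rewrites this as $\length((I:x)/I)\le \mu(I)-d+1$. The approach I would take is to note that multiplication by $x$ identifies $(I:x)/(\m I:x)$ with a $k$-subspace $W\subseteq I/\m I$, and then to prove (i) that $(\m I:x)=I$ for general $x$, so that $\length((I:x)/I)=\dim_k W$, and (ii) that $W$ has codimension at least $d-1$ in $I/\m I$, arising from a minimal reduction of $\m$. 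This colength inequality is precisely the Lech-type bound on the number of generators recorded in Theorem~\ref{general mixed}, and it is where integral closedness of $I$ is indispensable: for the non-integrally-closed ideal $(x^2,y^2)\subseteq k[[x,y]]$ a general section has $\length(\overline{R}/\overline{I})=2>1=\mu(I)-d+1$, and part~(1) already fails. I expect step~(ii) --- isolating the codimension $d-1$ from the interaction of a minimal reduction of $\m$ with the integral closedness of $I$ --- to be the main obstacle of the whole argument.

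Granting part~(1), I deduce part~(2) from the comparison $\eh(I)\le \loewy(I)\,\eh(\overline{I})$, which I can prove directly and which needs no regularity or depth hypotheses. Set $s=\loewy(I)$, so $\m^s\subseteq I$ and therefore $x^sI^{n-1}\subseteq I^n$, i.e.\ $I^{n-1}\subseteq (I^n:x^s)$. In the chain
\[
I^n\subseteq (I^n:x)\subseteq (I^n:x^2)\subseteq\cdots\subseteq (I^n:x^s),
\]
multiplication by $x$ embeds each successive quotient into the preceding one, so their lengths are non-increasing and $\length\big((I^n:x^s)/I^n\big)\le s\cdot\length\big((I^n:x)/I^n\big)$. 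Because $(I^n:x^s)\supseteq I^{n-1}$, while $\length((I^n:x)/I^n)=\length(\overline{R}/\overline{I}^{\,n})$ (the kernel and cokernel of $\cdot x$ on $R/I^n$ have equal length), this yields
\[
\length(I^{n-1}/I^n)\le s\cdot \length(\overline{R}/\overline{I}^{\,n}).
\]
Both sides are eventually polynomial of degree $d-1$ in $n$; comparing leading coefficients gives $\eh(I)\le s\,\eh(\overline{I})=\loewy(I)\,\eh(\overline{I})$. Substituting the bound for $\eh(\overline{I})$ from part~(1) proves part~(2), and as noted the regular case follows at once.
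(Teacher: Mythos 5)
Your overall architecture matches the paper's (Theorem~\ref{general mixed} and Corollary~\ref{loewy general}): Lech's inequality in $\overline R=R/(x)$ reduces part (1) to the colength bound $\length\bigl((I:x)/I\bigr)=\length(\overline R/\overline I)\le\mu(I)-d+1$, and part (2) follows by a multiplicity comparison along the hyperplane section. But your part (1) has a genuine, self-acknowledged gap exactly where the paper does its real work. Your step (i), $\m I:x=I$ for general $x$, is Goto's theorem (Remark~\ref{pass m-full}(1)) and is indeed the point where integral closedness enters, as your $(x^2,y^2)$ example correctly illustrates. Your step (ii), however --- that $W$ has codimension at least $d-1$ in $I/\m I$ --- is left unproved, and the mechanism you gesture at (``arising from a minimal reduction of $\m$'') is not the right one. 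The paper closes this step with Watanabe's formula: for an $\m$-full ideal, $\mu(I)=\mu(IR/(x))+\length(R/(I,x))$, combined with the trivial observation that $\mu(IR/(x))\ge d-1$ because $IR/(x)$ is primary to the maximal ideal of the $(d-1)$-dimensional ring $R/(x)$. In your setup this is a short computation rather than an obstacle: for any $x$ one has $I\cap(\m I+(x))=\m I+x(I:x)$, so your $W$ is exactly the kernel of the natural surjection $I/\m I\twoheadrightarrow \overline I/\overline{\m}\,\overline I$, whence $\dim_k W=\mu(I)-\mu(\overline I)\le\mu(I)-(d-1)$ by Krull's height theorem; $\m$-fullness is needed only to identify $\dim_k W$ with $\length\bigl((I:x)/I\bigr)$, since the kernel of $(I:x)\to I/\m I$, $u\mapsto xu$, is $\m I:x=I$. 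No minimal reduction of $\m$ is involved. So the gap is fillable in a few lines, but as written the proposal does not contain a proof of part (1).

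Your part (2), by contrast, is correct and takes a genuinely different route. The paper uses $x^{\loewy(I)}\in I$ together with two external inputs: $\eh\bigl(IR/(x^{\loewy(I)})\bigr)\ge\eh(I)$ (HIO, Proposition~28.1) and the subadditivity $\eh\bigl(IR/(xy)\bigr)\le\eh\bigl(IR/(x)\bigr)+\eh\bigl(IR/(y)\bigr)$ of Lemma~\ref{filtration}. Your colon chain $I^n\subseteq(I^n:x)\subseteq\cdots\subseteq(I^n:x^s)$, with multiplication by $x$ embedding each successive quotient into the previous one, the kernel--cokernel length equality for $\cdot x$ on the finite-length module $R/I^n$, and a comparison of leading coefficients, replaces both citations by a self-contained elementary argument; it also makes transparent that this step uses nothing about integral closedness, exactly as in the paper's Corollary~\ref{loewy general}. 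The final reduction to the regular case via $\eh(R)=1$ is the same in both.
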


Combining this with a result of D'Cruz and Verma (\cite{DCruzVerma2}), who showed that in a Cohen-Macaulay ring
$\mu(I) - d+ 1 \leq \eh(IR/(x))$, we obtain a parallel with the classical inequalities on lengths:
\[
\length (R/I) \leq \eh(I) \leq d! \eh(R) \length (R/I).
\]

Theorem~\ref{mainTheorem} shows that every local ring satisfies inequality~\ref{ineq}
weakened by $\eh(R)$, a factor representing how singular is $R$.
Thus, we expect inequality~\ref{ineq} to hold for all integrally closed $\m$-primary ideals as long as $R$ has ``nice" singularities. 
In this note we investigate this in small dimensions. The following summarizes our main  results (for details, see Theorem \ref{dimension one}, Corollary \ref{unramified criterion} and Corollary \ref{cool evidence}):

\begin{theorem}\label{summary}
Let $(R, \m)$ be a local ring. 
\begin{enumerate}
\item If  $R$ is Cohen-Macaulay and $d=1$, inequality \ref{ineq} holds for all integrally closed, $\m$-primary ideals if and only if $R$ has minimal multiplicity. 
\item If  $R$ is Cohen-Macaulay and $d=2$, inequality \ref{ineq} holds for all integrally closed, $\m$-primary ideals if $R$ has minimal multiplicity and $\m$ is normal (i.e., all powers of $\m$ are integrally closed).  The converse holds if $R$ is analytically unramified.  In particular,  inequality \ref{ineq} holds for all integrally closed, $\m$-primary ideals if $R$ has an isolated rational singularity.
\item When $d=3$,  inequality \ref{ineq} holds for all integrally closed, $\m$-primary ideals if  $R$ has cDV singularity (that is, for a general $x\in \mf m$, $R/(x)$ is an ADE singularity). 
\end{enumerate}
\end{theorem}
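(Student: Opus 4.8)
The plan is to treat all three cases through one mechanism: reduce inequality~\ref{ineq} to a sharp comparison between $\eh(IR/(x))$ and $\mu(I)$ in a general hyperplane section, and then use the hypothesis on the singularities of $R$ to establish that comparison. Throughout, write $d=\dim R$, fix a general $x\in\m$, and set $\bar R=R/(x)$. The backbone is the Lech-type inequality $\eh(I)\le \loewy(I)\,\eh(IR/(x))$ that powers the passage from Theorem~\ref{mainTheorem}(1) to~(2). Granting it, inequality~\ref{ineq} for $d\ge 2$ follows the moment one proves the key estimate
\begin{equation*}
\eh(IR/(x))\ \le\ (d-1)!\,(\mu(I)-d+1). \tag{$\star$}
\end{equation*}
Since the result of D'Cruz and Verma supplies the reverse bound $\mu(I)-d+1\le \eh(IR/(x))$, proving $(\star)$ amounts to deleting the factor $\eh(R)$ from Theorem~\ref{mainTheorem}(1); this is exactly the content that forces ``nice'' singularities, and it is where the real work lies. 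In particular I expect $(\star)$ to hold with equality when $d=2$.

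For part (1) the dimension-one case is the base of the argument and is treated by hand. The ``only if'' direction is immediate: applying inequality~\ref{ineq} to the integrally closed ideal $I=\m$, where $\loewy(\m)=1$, gives $\mu(\m)\ge \eh(R)$, and Abhyankar's inequality $\eh(R)\ge \mu(\m)$ forces equality, i.e.\ minimal multiplicity. For ``if'', I would first note that $\m^{\loewy(I)}\subseteq I$ yields $\eh(I)\le \eh(\m^{\loewy(I)})=\loewy(I)\,\eh(R)$, since in dimension one $\eh(\m^{n})=n\,\eh(R)$. It then suffices to prove that every integrally closed $\m$-primary ideal $I$ satisfies $\mu(I)=\eh(R)$. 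The inequality $\mu(I)=\length(I/\m I)\le \length(I/xI)=\eh(R)$ (for $x$ a minimal reduction of $\m$) always holds; the reverse is the key lemma, which I would derive from minimal multiplicity in the form $\m^2=x\m$: this makes the first neighborhood ring $(\m:\m)$ equal to the normalization with principal maximal ideal, whence $\m I=xI$ and $\mu(I)=\eh(R)$ for every integrally closed $I$.

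For parts (2) and (3) the task is to establish $(\star)$. In dimension two I would use that the hypotheses — minimal multiplicity together with normality of $\m$, and a fortiori a two-dimensional rational singularity — force every integrally closed $\m$-primary ideal to have reduction number one, so that $\gr_I(R)$ and the fiber cone are Cohen--Macaulay with minimal multiplicity; the Hilbert--Samuel data then collapse to give $\eh(IR/(x))=\mu(I)-1$, which is $(\star)$ with equality. The converse, under analytic unramifiedness, runs backwards: testing inequality~\ref{ineq} on $\m$ gives minimal multiplicity as above, and testing it on the integral closures $\overline{\m^{k}}$ (module-finite over $R$ by analytic unramifiedness) and comparing multiplicities and Hilbert functions forces each $\m^{k}=\overline{\m^{k}}$, i.e.\ $\m$ normal. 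For part (3) the device is that a cDV singularity has, for general $x$, a section $\bar R=R/(x)$ that is an ADE and hence a two-dimensional rational singularity; I would transport the dimension-two analysis into $\bar R$ to bound $\eh(IR/(x))$ and thereby obtain $(\star)$ for $d=3$.

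The main obstacle is $(\star)$ itself, i.e.\ removing the singularity factor $\eh(R)$ from the Lech-type bound of Theorem~\ref{mainTheorem}. This requires the fine structure of integrally closed ideals — reduction number one and Cohen--Macaulayness of the associated graded ring and fiber cone — which is precisely what characterizes the admissible singularity classes, and is the reason the statement must restrict to them. The subtlest point is part (3): pushing the two-dimensional rational-singularity structure of $\bar R$ back up to $R$ while keeping a \emph{single} factor of $\loewy(I)$ and the exact constant $(d-1)!=2$, since a naive iteration of the hyperplane bridge would produce a spurious second Loewy factor. Controlling how $\mu(I)$, $\loewy(I)$, and $\eh(IR/(x))$ degenerate under the general hyperplane section — in particular that $\mu(I)$ in $(\star)$ is the generator count in $R$ rather than in $\bar R$ — is the crux of the argument.
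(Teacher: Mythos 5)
Your skeleton --- the bridge $\eh(I)\le \loewy(I)\,\eh(IR/(x))$ plus the key estimate $(\star)\colon \eh(IR/(x))\le (d-1)!\,(\mu(I)-d+1)$ --- is exactly the right frame, and $(\star)$ is indeed equivalent to the D'Cruz--Verma ``minimal mixed multiplicity'' equality in dimension two. But three steps as written are gapped. First, in part (1), your structural claim is false: minimal multiplicity does \emph{not} make $B=\m:\m$ the normalization with principal maximal ideal. For $R=k[[t^3,t^7,t^8]]$ one has $\m^2=t^3\m$, yet $B=k[[t^3,t^4,t^5]]$, which is not normal and whose maximal ideal is not principal. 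The conclusion you want, $\m I=xI$ for integrally closed $I$, is still true, but via a different mechanism: $\m^2=x\m$ gives $\m B=xB$, and $BI=I$ because $BI\subseteq \overline{R}I\cap R\subseteq \overline{I}=I$. The paper sidesteps all of this: since $I$ is $\m$-full, $\mu(I)\ge \mu(\m)=\eh(R)$ directly by Remark~\ref{pass m-full}(2), while $\mu(J)\le \eh(R)$ for every $\m$-primary $J$ via $\length(J/\m J)\le \length(J/xJ)$.

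The serious gaps are in parts (2) and (3). In (2), your pivotal claim --- that minimal multiplicity together with normality of $\m$ forces \emph{every} integrally closed $\m$-primary ideal to have reduction number one --- is unjustified and should be expected to fail: the class of rings satisfying these hypotheses strictly contains non-rational singularities (e.g.\ $k[[x,y,z]]/(x^2+y^3+z^6)$, cf.\ the paper's examples), whereas reduction number one for all integrally closed ideals is the hallmark of rational surface singularities (Lipman--Teissier). Your route therefore only recovers the ``in particular'' rational case, which the paper also handles this way in Corollary~\ref{ratsing} (and even there you additionally need Lipman's theorem that integrally closed ideals in rational surfaces are normal before invoking Theorem~\ref{fiber bound}). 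The paper's actual proof of the general forward direction gets $(\star)$, i.e.\ $\eh(I\mid\m)=\mu(I)-1$, from Rees's joint-reduction machinery --- $\overline{IJ}=IJ=bI+aJ$ whenever $J$ is normal with reduction number one --- a statement about the \emph{pair} $(I,\m)$ that makes no claim about reductions of $I$ itself. Likewise your converse sketch (``comparing multiplicities and Hilbert functions forces $\m^k=\overline{\m^k}$'') omits the actual content: one needs the Huneke--Itoh intersection theorem to get $\overline{\m^2}=(x,y)\m$, then Itoh's bound on the normal Hilbert polynomial and Huneke's monotonicity of $\overline{u}_n$ to propagate normality to all powers. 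Finally, in (3), ``transporting the dimension-two analysis'' into $\bar R=R/(x)$ fails as stated: $I\bar R$ need not be integrally closed in $\bar R$, so the dimension-two theorem does not apply to it, and even if it did it would reintroduce the second Loewy factor you rightly worry about. The missing idea is the Goto--Iai--Watanabe theorem that Du Val singularities have Lech constant one, $\eh(J)\le 2\,\length(\bar R/J)$ for \emph{arbitrary} $\m$-primary $J$; combining this with Watanabe's formula $\mu(I)=\mu(I\bar R)+\length(\bar R/I\bar R)$ and $\mu(I\bar R)\ge 2$ yields $(\star)$ for $d=3$ exactly as in the regular case of Theorem~\ref{general mixed}, with Reid's result supplying the Du Val general section.
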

 
We also discuss and prove many related results, such as sharper and similar inequalities in the dimension $2$ cases, as well as inequalities for multiplicity of fiber cones. 

It is worth noting that our investigation and some of the results were inspired by other works  with rather  different flavors. Both sides of inequality~\ref{ineq} appears as upper bounds for dimension of the {\it singularity category} of a  Cohen-Macaulay ring $R$ (when $d\leq 2$) in the work of the first author and Takahashi (\cite{DaoTakahashi}). It is thus natural to ask when such inequality always holds. Also, in dimension $2$ some of our results can be proved by recent beautiful work on $p_g$ ideals by Okuma, Watanabe and Yoshida (\cite{OWY1, OWY2}). Finally, it turned out that the regular case in dimension two of Inequality \ref{ineq} implies a special case of a conjecture by  Huneke, Musta\c{t}\v{a}, Takagi, and Watanabe on $F$-threshold.  

As stated above, we expect the inequality~\ref{ineq} to hold when $R$ has nice singularities. Just to  be more focused, we would like to propose:
\begin{question}\label{maybe stupid}
Does inequality~\ref{ineq} hold for all integrally closed, $\m$-primary ideals if $R$ has  Gorenstein terminal singularities? 
\end{question}

The paper is organized as follows. Section \ref{prelim} deals with preparatory materials, including a summary on $\m$-full ideals. In the next section we prove inequality \ref{ineq} for regular local rings. The result follows from a more general inequality for any local rings, see Theorem \ref{regular} and Remark \ref{rmLech}. Section \ref{smdim} handles the cases of dimension at most $3$. We also present a few examples to illustrate the sharpness of our results. Section \ref{alternative} discusses special cases in dimension two where one can  sometimes obtain sharper inequalities and alternative proofs, including the geometric approach used in \cite{OWY1, OWY2}. In Section \ref{threshold} we prove a special case of the conjecture by Huneke, Musta\c{t}\v{a}, Takagi, and Watanabe on $F$-threshold.  Section \ref{fiber} analyzes the asymptotic version of our inequality, which leads to some new inequalities about multiplicity of the fiber cones. Finally in Section \ref{powers} we study the inequality for powers of maximal ideals, and specify the stringent conditions it forces on the singularity of $R$ via the $h$-vector of the associated graded ring.   

\begin{acknowledgement}
We are deeply grateful to Kei-ichi Watanabe for many detailed discussions on \cite{OWY1, OWY2}. His insights gave us the geometric proofs in Section \ref{alternative} and suggested the if and only if statement in the dimension two case of Theorem \ref{mainTheorem}. We are also grateful to  Craig Huneke, Dan Katz, Jonathan Monta\~no, Mircea Musta\c{t}\v{a},   Shunsuke Takagi, Bernd Ulrich, Junzo Watanabe and Ken-ichi Yoshida for helpful discussions. We thank the Mathematics Department at University of Kansas and University of Michigan, where most of this work was done,  for hospitable working conditions. The first author is partially  supported by NSA grant H98230-16-1-001. 

\end{acknowledgement}

\section{Preliminaries}\label{prelim}

One of our key observations is that it is sometimes easier to work with $\m$-full ideals instead of integrally closed ideals. We recall:

\begin{definition}\label{defs}
Let $(R, \mf m)$ be a local ring with infinite residue field. An ideal $I$ of $R$ is $\mf m$-{\it full} if there exists an element $z \in \mf m$  such that 
$\mf mI : z = I$ (equivalently, for any general element $z$). $I$ is {\it full} if  there exists an element $z \in \mf m$ such that $I:z=I:\mf m$.   

In general, $I$ is $\mf m$-full if there exists a faithfully flat extension $R\to S$ with infinite residue field such that $\mf mS$ is the maximal ideal of $S$ and $IS$ is $\mf mS$-full. 
\end{definition}

\begin{remark}\label{pass m-full}
The following facts are known: 

\begin{enumerate}
\item If  $I$ is integrally closed, then $I$ is $\mf m$-full  or $I = \sqrt{(0)}$ (\cite[Theorem~2.4]{Goto}).
\item If $I$ is $\mf m$-primary and $\mf m$-full, then $\mu(J) \leq \mu(I)$ for any ideal $J\supseteq I$ (\cite[Theorem~3]{WatanabeJ}).  
\item $\mf m$-full ideals are full (\cite[Lemma~1]{WatanabeJ}). 
\end{enumerate}
\end{remark}

\begin{remark}\label{pass to infinite residue field}
By passing to $S = R[x]_{\mf m[x]}$, we can harmlessly assume that the residue field of $R$ is infinite.
Then $IS$ is integrally closed if and only if $I$ is integrally closed and  
this extension does not change the number of generators, multiplicity, or the Loewy length. 
\end{remark}

It is not hard to see that there is no difference in establishing inequality ~\ref{ineq} for $\m$-full ideals or for integrally closed ideals. 

\begin{proposition}\label{go mfull}
Let $(R, \mf m)$ be a local ring of dimension at least $1$. The following are equivalent:
\begin{enumerate}
\item Inequality~\ref{ineq} holds for all $\mf m$-primary, integrally closed ideals. 
\item Inequality~\ref{ineq} holds for all $\mf m$-primary, $\mf m$-full ideals. 
\end{enumerate} 
\end{proposition}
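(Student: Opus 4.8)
The plan is to exploit the fact that an integrally closed $\m$-primary ideal is automatically $\m$-full, so that one implication is free, and to pass to the integral closure for the other. For the direction (2) $\Rightarrow$ (1), observe that if $I$ is $\m$-primary and integrally closed then $I \neq \sqrt{(0)}$: were $I = \sqrt{(0)}$, then $\m = \sqrt{I} = \sqrt{(0)}$, forcing $\dim R = 0$ and contradicting $d \geq 1$. Hence by Remark~\ref{pass m-full}(1) the ideal $I$ is $\m$-full, and hypothesis (2) applies to it directly. Thus the substance of the proposition lies entirely in the reverse implication.

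For (1) $\Rightarrow$ (2), I would first invoke Remark~\ref{pass to infinite residue field} to reduce to the case of an infinite residue field, since passing to $R[x]_{\m[x]}$ alters none of $\mu$, $\eh$, $\loewy$ and preserves both integral closedness and $\m$-fullness. Now fix an $\m$-primary, $\m$-full ideal $I$ and let $\overline{I}$ denote its integral closure, which is again $\m$-primary and is integrally closed, hence a legitimate input for hypothesis (1). The strategy is to compare the three relevant invariants of $I$ and $\overline{I}$, and then to chain the inequality known for $\overline{I}$ with these comparisons.

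The three comparisons are: (i) $\eh(I) = \eh(\overline{I})$, because the Hilbert--Samuel multiplicity depends only on the integral closure of the ideal; (ii) $\loewy(\overline{I}) \leq \loewy(I)$, which is immediate from $I \subseteq \overline{I}$, as $\m^{n} \subseteq I$ forces $\m^{n} \subseteq \overline{I}$; and (iii) $\mu(\overline{I}) \leq \mu(I)$, which is precisely where $\m$-fullness enters. Indeed, by Remark~\ref{pass m-full}(2) an $\m$-primary, $\m$-full ideal has the largest number of generators among all ideals containing it, and $\overline{I} \supseteq I$. Since $I$ is $\m$-primary in a $d$-dimensional ring we have $\mu(I) \geq d$ by Krull's height theorem, so both $\mu(I) - d + 1$ and $\mu(\overline{I}) - d + 1$ are positive, and (iii) gives $\mu(\overline{I}) - d + 1 \leq \mu(I) - d + 1$.

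Assembling these, hypothesis (1) applied to the integrally closed ideal $\overline{I}$ yields
\[
(d-1)!\,(\mu(I) - d + 1)\,\loewy(I) \;\geq\; (d-1)!\,(\mu(\overline{I}) - d + 1)\,\loewy(\overline{I}) \;\geq\; \eh(\overline{I}) \;=\; \eh(I),
\]
which is exactly inequality~\ref{ineq} for $I$. The only genuinely nontrivial ingredient is comparison (iii); the remaining steps are monotonicity or the invariance of multiplicity under integral closure. I therefore expect the main point to emphasize is that it is the $\m$-full hypothesis \emph{on $I$ itself}, rather than on $\overline{I}$, that licenses the inequality $\mu(\overline{I}) \leq \mu(I)$, reversing the naive expectation that passing to a larger ideal can only increase the number of generators.
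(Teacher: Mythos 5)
Your proof is correct and takes essentially the same route as the paper: (2) $\Rightarrow$ (1) follows because $\dim R \geq 1$ rules out $I = \sqrt{(0)}$, so an integrally closed $\m$-primary ideal is $\m$-full by Remark~\ref{pass m-full}, and (1) $\Rightarrow$ (2) is obtained by passing to $\overline{I}$ and chaining $\mu(\overline{I}) \leq \mu(I)$ (via Remark~\ref{pass m-full}(2)), $\loewy(\overline{I}) \leq \loewy(I)$, and $\eh(I) = \eh(\overline{I})$. Your additional care about positivity of $\mu(\overline{I}) - d + 1$ (via Krull's height theorem) and the reduction to an infinite residue field are harmless refinements that the paper leaves implicit.
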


\begin{proof}
Assume $(1)$ and let $I$ be $\mf m$-full. Since $I \subseteq \overline{I}$ and $I$ is $\m$-full, 
we must have  that $\mu (I) \geq \mu (\overline{I})$ by Remark \ref{pass m-full} and $\loewy (I) \geq \loewy(\overline{I})$.
Now, inequality \ref{ineq} holds for $\overline I$ and therefore for $I$ if we recall that $\eh(I) = \eh(\overline {I})$.

Assume $(2)$ and let $I$ be integrally closed $\mf m$-primary ideal. Then as $\dim R>0$, $I$ must be $\mf m$-full by Remark \ref{pass m-full}.

\end{proof}

We also record the following elementary lemma that will be used later.

\begin{lemma}\label{filtration}
Let $(R,\mf m)$ be a local ring and $I$ be a $\mf m$-primary ideal. Let $x,y \in \mf m$. Then 
\[
\eh\left(IR/(xy)\right) \leq \eh\left(IR/(x) \right) +\eh\left( IR/(y) \right).
\]
\end{lemma}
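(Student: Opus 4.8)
The plan is to filter $R/(xy)$ by a short exact sequence and invoke additivity of Hilbert--Samuel multiplicity on it. Write $s = \dim R/(xy)$, and for a finitely generated $R$-module $M$ let $\eh_s(I;M)$ denote the degree-$s$ multiplicity of $I$ on $M$, i.e.\ the normalized leading coefficient of the eventual polynomial $\length(M/I^nM)$ regarded as having degree $\le s$ (so that $\eh_s(I;M)=0$ once $\dim M < s$); this is exactly the invariant for which additivity on short exact sequences holds. Since $(xy)\subseteq(x)$, the surjection $R/(xy)\twoheadrightarrow R/(x)$ has kernel $(x)/(xy)$, yielding
\[
0 \to (x)/(xy) \to R/(xy) \to R/(x) \to 0,
\]
and because $s=\dim R/(xy)$ we have $\eh(IR/(xy)) = \eh_s(I;R/(xy))$.

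Next I would compare the submodule $(x)/(xy)$ with $R/(y)$. Multiplication by $x$ gives a surjective $R$-linear map $R/(y) \twoheadrightarrow (x)/(xy)$: it is well defined because $x(y)\subseteq(xy)$, and surjective because $(x)=Rx$. Writing $N=R/(y)$ and $M=(x)/(xy)$, this surjection induces surjections $N/I^nN \twoheadrightarrow M/I^nM$ for every $n$, so $\length(M/I^nM)\le \length(N/I^nN)$ for all $n$ and hence $\eh_s(I;M)\le \eh_s(I;N)$.

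Then I would assemble the pieces. Additivity applied to the displayed sequence gives $\eh_s(I;R/(xy)) = \eh_s(I;(x)/(xy)) + \eh_s(I;R/(x))$. Since $(x),(y)\supseteq(xy)$, both $R/(x)$ and $R/(y)$ are quotients of $R/(xy)$, so $\dim R/(x),\ \dim R/(y) \le s$; consequently $\eh_s(I;R/(x)) \le \eh(IR/(x))$ and $\eh_s(I;R/(y)) \le \eh(IR/(y))$, with equality when the relevant dimension equals $s$ and with vanishing left-hand side otherwise. Here I use the standard identification of $\eh(IR/(x))$, the multiplicity of the image of $I$ in the ring $R/(x)$, with the module multiplicity $\eh_{\dim R/(x)}(I;R/(x))$ over $R$, the lengths being the same computed over $R$ or over $R/(x)$. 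Chaining the inequalities through $\eh_s(I;M)\le \eh_s(I;N)$ yields $\eh(IR/(xy)) \le \eh(IR/(x)) + \eh(IR/(y))$.

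The only delicate point is the dimension bookkeeping: additivity is clean only for the fixed-degree multiplicity $\eh_s$, and one must check that replacing $\eh_s$ by the genuine multiplicities can only enlarge the right-hand side. This is precisely where the statement becomes an inequality rather than an equality. Indeed, when $x$ and $y$ are nonzerodivisors all three modules have dimension $s$ and multiplication by $x$ gives an isomorphism $(x)/(xy)\cong R/(y)$, so the same computation returns the sharper equality $\eh(IR/(xy)) = \eh(IR/(x)) + \eh(IR/(y))$.
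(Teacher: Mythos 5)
Your proof is correct, and its skeleton is the same as the paper's: both arguments hinge on the short exact sequence $0 \to (x)/(xy) \to R/(xy) \to R/(x) \to 0$ together with the surjection $R/(y) \twoheadrightarrow (x)/(xy)$ given by multiplication by $x$. The only genuine difference is how the asymptotics are extracted. The paper never leaves the level of lengths: tensoring the sequence with $R/J$ (right-exactness plus subadditivity of length, plus the surjection onto $(x)/(xy)\otimes R/J$) gives $\length\left(R/(J,xy)\right) \leq \length\left(R/(J,x)\right) + \length\left(R/(J,y)\right)$ for \emph{every} ideal $J$; it then sets $J = I^n$, divides by $n^{\dim R/(xy)}$, and passes to the limit, handling the dimension bookkeeping by the observation that $R/(xy)$ surjects onto both $R/(x)$ and $R/(y)$. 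You instead pass to multiplicities first and invoke additivity of the fixed-degree multiplicity $\eh_s$ along the exact sequence --- a standard but heavier tool, since its proof requires Artin--Rees --- and then compare $\eh_s$ with the genuine multiplicities by the same dimension observation. Your bookkeeping is done carefully and the argument is complete. What your route buys is the sharper byproduct you note at the end: when $x$ and $y$ are nonzerodivisors, multiplication by $x$ identifies $(x)/(xy)$ with $R/(y)$ and additivity returns the equality $\eh\left(IR/(xy)\right) = \eh\left(IR/(x)\right) + \eh\left(IR/(y)\right)$, which the purely length-theoretic argument does not yield directly; what the paper's route buys is elementarity, as the per-$n$ length inequality needs nothing beyond right-exactness of tensor.
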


\begin{proof}
It is enough to prove that for every ideal $J$
\[\length (R/(J,xy)) \leq \length (R/(J,x)) + \length (R/(J,y)).\] 
For, if it holds, then we just need take $J=I^n$ and divide by $n^{\dim R/(xy)}$ to get the desired conclusion. Note that $\dim R/(xy) \geq \max\{\dim R/(x), \dim R/(y)\}$ as $R/(xy)$ surjects onto $R/(x), R/(y)$.  

Consider the short exact sequence 
\[0 \to (x)/(xy) \to R/(xy) \to R/(x) \to 0.\] 
The claim now easily follows after tensoring it with $R/J$ and
and observing that $R/(y)$ surjects onto $(x)/(xy)$. 
\end{proof}

\section{A general inequality and inequality \ref{ineq} for regular local rings}\label{regular}

In this section we will show that inequality \ref{ineq} holds for a regular local ring of arbitrary dimension. 
In fact we will prove a more general  inequality for an arbitrary local ring, utilizing the theory of $\mf m$-full ideals discussed in the last section.  

\begin{theorem}\label{general mixed}
Let $(R, \mf m)$ be a local ring of dimension $d>0$ and $I$ be an $\mf m$-primary $\mf m$-full ideal.
Then for a general element $x \in \mf m$
\[
(d-1)!\eh(R) (\mu(I) - d + 1) \geq \eh(IR/(x)).
\]
\end{theorem}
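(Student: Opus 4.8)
The plan is to combine Lech's classical inequality with the structure theory of $\m$-full ideals. After passing to an infinite residue field (Remark~\ref{pass to infinite residue field}), I would fix a single general element $x \in \m$ chosen to satisfy several genericity conditions at once: it is superficial for $\m$, so that $\eh(R/(x)) = \eh(R)$; it witnesses $\m$-fullness, so that $\m I : x = I$; and, since an $\m$-full ideal is full (Remark~\ref{pass m-full}), it satisfies $I : x = I : \m$. Writing $\bar R = R/(x)$, of dimension $d-1$, and applying Lech's inequality $\eh(J; A) \le (\dim A)!\,\eh(A)\,\length(A/J)$ to the $\m$-primary ideal $IR/(x)$ in $\bar R$ yields
\[
\eh(IR/(x)) \le (d-1)!\,\eh(\bar R)\,\length\!\left(R/(I,x)\right) = (d-1)!\,\eh(R)\,\length\!\left(R/(I,x)\right).
\]
It therefore suffices to prove the purely ideal-theoretic bound $\length(R/(I,x)) \le \mu(I) - d + 1$.

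Next I would rewrite the left-hand side in terms of the socle of $R/I$. Using $I : x = I : \m$, multiplication by $x$ fits into the four-term exact sequence
\[
0 \to (I : \m)/I \to R/I \xrightarrow{\ x\ } R/I \to R/(I,x) \to 0,
\]
and additivity of length collapses it to $\length(R/(I,x)) = \length((I:\m)/I) = \dim_k \soc(R/I)$. So the goal becomes the socle bound $\dim_k \soc(R/I) \le \mu(I) - d + 1$.

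To establish this I would compare the generators of $I$ with those of its image. There is a natural surjection $I/\m I \twoheadrightarrow (I + xR)/(\m I + xR)$, whose target is the vector space $IR/(x)$ modulo $\m\, IR/(x)$, of dimension $\mu(IR/(x))$. Its kernel is precisely the image of the map $(I : \m) \to I/\m I$, $r \mapsto x r$ (well defined since $\m(I:\m) \subseteq I$); the equation $\m I : x = I$ identifies this kernel with $(I:\m)/I$, of dimension $\dim_k\soc(R/I)$. Hence $\mu(I) = \mu(IR/(x)) + \dim_k \soc(R/I)$. Finally, $IR/(x)$ is an $\m$-primary ideal in the $(d-1)$-dimensional ring $\bar R$, so Krull's height theorem forces $\mu(IR/(x)) \ge d-1$; thus $\dim_k\soc(R/I) = \mu(I) - \mu(IR/(x)) \le \mu(I) - d + 1$, which together with the Lech estimate proves the theorem.

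The main obstacle is the simultaneous use of the two colon identities $\m I : x = I$ and $I : x = I : \m$ for one general $x$, which is where the entire $\m$-fullness hypothesis is spent; the generator formula $\mu(I) = \mu(IR/(x)) + \dim_k\soc(R/I)$ is the technical heart (essentially Watanabe's argument), and the rest is formal. The one genuinely degenerate point is $d = 1$: then $\bar R$ is Artinian, both Lech's inequality and the equality $\eh(R/(x)) = \eh(R)$ trivialize, and the claim $\eh(R)\,\mu(I) \ge \eh(IR/(x)) = \length(R/(x))$ must be checked by hand, with some care about the torsion submodule $0 :_R x$ when $\depth R = 0$.
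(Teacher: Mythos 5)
Your argument is, in all essentials, the paper's own proof: a single general $x$ chosen simultaneously to be superficial for $\m$ and to witness $\m$-fullness, Watanabe's generator count $\mu(I) = \mu(IR/(x)) + \length(R/(I,x))$, the bound $\mu(IR/(x)) \geq d-1$ coming from the dimension of $R/(x)$, and Lech's inequality applied to $IR/(x)$ together with $\eh(R/(x)) = \eh(R)$. The only real difference is that where the paper cites Watanabe's Theorem~2, you re-derive it; your derivation is correct, and the socle reformulation $\length(R/(I,x)) = \dim_k \soc(R/I)$ (length additivity along multiplication by $x$ on the finite-length module $R/I$), as well as the kernel computation in the surjection $I/\m I \twoheadrightarrow (I+xR)/(\m I + xR)$, are exactly the content of Watanabe's Lemma~1 and Theorem~2. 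One small simplification: the two colon identities are not independent genericity demands, since $\m I : x = I$ already implies $I : x = I : \m$ for the \emph{same} $x$ (if $xr \in I$ and $y \in \m$, then $x(yr) = y(xr) \in \m I$, so $yr \in \m I : x = I$), so no extra openness condition is being spent there.

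Your closing caveat about $d = 1$ is not paranoia: it marks a genuine gap, and the same gap sits silently in the paper's proof. The equality $\eh(R/(x)) = \eh(R)$ for superficial $x$ requires $d \geq 2$ (or positive depth); for $d = 1$ one only has $\length(R/(x)) = \eh(R) + \length(0 :_R x)$. If $\depth R \geq 1$, the $d = 1$ case of the theorem is trivially true, since then $\eh(IR/(x)) = \length(R/(x)) = \eh(R) \leq \eh(R)\mu(I)$. But when $d = 1$ and $\depth R = 0$ the statement itself can fail, so your plan to ``check it by hand'' cannot succeed in full generality: take $R = k[[t,u]]/(t^2u,\, tu^2,\, u^3)$ and $I = \m$. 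Here $\m$ is integrally closed and $\m$-full (one checks $\m^2 : (t + \alpha u) = \m$ for $\alpha \neq 0$), with $\eh(R) = 1$ and $\mu(\m) = 2$, yet $R/(t+\alpha u) \cong k[u]/(u^3)$, so $\eh(\m R/(x)) = \length(R/(x)) = 3 > 2 = 0!\,\eh(R)\,(\mu(\m) - 1 + 1)$. The correct repair, for both your write-up and the theorem, is to assume $d \geq 2$, or $\depth R > 0$ when $d = 1$. Note that the paper's downstream use is unharmed: the conclusion of Corollary~\ref{loewy general} is trivially true when $d = 1$, since $\m^{\loewy(I)} \subseteq I$ gives $\eh(I) \leq \loewy(I)\,\eh(R) \leq \eh(R)\,\loewy(I)\,\mu(I)$.
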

\begin{proof}
We can assume that $R$ has an infinite residue field (Remark~\ref{pass to infinite residue field}). Let $x$ be a general element such that $I\mf m:x=I$. 
Furthermore, a general element can be taken to be a superficial parameter with respect to $\mf m$ 
(for example, see \cite[Proposition~8.5.7]{HS}), so we may assume that $\dim R/(x) = d-1$ and $\eh(R/(x)) = \eh (R)$. 
By \cite[Theorem~2]{WatanabeJ} we have
\[
\mu (I) = \mu (IR/(x)) + \length (R/(I,x)).
\]

Next observe that $\mu (IR/(x)) \geq d - 1$, 
because $IR/(x)$ is a $0$-dimensional ideal in the ring $R/(x)$ of dimension $d - 1$. 
Thus it will suffice to show that
\[
(d-1)!\eh(R) \length (R/(J,x)) \geq \eh(JR/(x))
\]
for {\it any} $\mf m$-primary ideal $J$ of $R$.
But this is nothing else than 
Lech's inequality (\cite[Theorem~3]{Lech}) applied to $JR/(x)$ in $R/(x)$,
since $\eh(R) = \eh(R/(x))$ by superficiality of $x$.
\end{proof}

\begin{corollary}\label{loewy general}
Let $(R, \mf m)$ be a local ring of dimension $d>0$ and $I$ be an $\mf m$-primary $\mf m$-full ideal.
Then 
\[
(d-1)!\eh(R) \loewy(I) (\mu(I) - d + 1) \geq \eh(I).
\]
\end{corollary}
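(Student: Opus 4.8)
The plan is to bootstrap the $(d-1)$-dimensional bound of Theorem~\ref{general mixed} back up to dimension $d$, with the Loewy length entering precisely as the exponent needed to push a general element of $\m$ into $I$. Write $n=\loewy(I)$ and choose a general $x\in\m$ as in Theorem~\ref{general mixed}, so that $\dim R/(x)=d-1$ and $(d-1)!\,\eh(R)(\mu(I)-d+1)\ge \eh(IR/(x))$. The first (trivial but crucial) observation is that $x^n\in\m^n\subseteq I$, since $\m^n\subseteq I$ by the very definition of $n=\loewy(I)$; this is exactly what lets me slice by $x^n$ and stay inside $I$.

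The key step is to prove $\eh(I)\le\eh(IR/(x^n))$. For this I would use, for each $k$, the short exact sequence
\[
0\longrightarrow R/(I^{k+1}:x^n)\xrightarrow{\ \cdot\, x^n\ } R/I^{k+1}\longrightarrow R/(I^{k+1},x^n)\longrightarrow 0,
\]
together with the inclusion $I^{k}\subseteq (I^{k+1}:x^n)$, which holds because $x^n\in I$. Taking lengths gives
\[
\length\!\left(R/(I^{k+1},x^n)\right)=\length(R/I^{k+1})-\length\!\left(R/(I^{k+1}:x^n)\right)\ge \length(R/I^{k+1})-\length(R/I^{k}).
\]
The right-hand side is the first difference of the Hilbert--Samuel function of $I$, whose leading term is $\eh(I)\,k^{d-1}/(d-1)!$; since $\dim R/(x^n)=d-1$, normalizing by $(d-1)!/k^{d-1}$ and letting $k\to\infty$ turns the left-hand side into $\eh(IR/(x^n))$ and the lower bound into $\eh(I)$, giving $\eh(I)\le\eh(IR/(x^n))$.

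It then remains to control the power $x^n$, and here Lemma~\ref{filtration} does the work: applying it to the factorization $x^n=x\cdot x^{n-1}$ and inducting on $n$ yields the subadditive bound $\eh(IR/(x^n))\le n\,\eh(IR/(x))=\loewy(I)\,\eh(IR/(x))$ (all the rings $R/(x^j)$ have dimension $d-1$, so the multiplicities live in a common dimension and the lemma applies verbatim). Chaining the three estimates,
\[
\eh(I)\le\eh(IR/(x^n))\le\loewy(I)\,\eh(IR/(x))\le (d-1)!\,\eh(R)\,\loewy(I)\,(\mu(I)-d+1),
\]
which is the assertion of the corollary. The one point that needs an idea rather than routine bookkeeping is the middle paragraph: recognizing that $\loewy(I)$ is exactly the exponent making $x^n\in I$, so that the dimension-raising comparison $\eh(I)\le\eh(IR/(x^n))$ becomes available through the displayed exact sequence; once that is in hand, the remaining two inequalities are an induction on Lemma~\ref{filtration} and a direct appeal to Theorem~\ref{general mixed}.
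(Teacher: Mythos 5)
Your proof is correct and follows essentially the same route as the paper: slice by the general element $x$ from Theorem~\ref{general mixed}, use $x^{\loewy(I)}\in I$ to get $\eh\bigl(IR/(x^{\loewy(I)})\bigr)\geq \eh(I)$, and then apply Lemma~\ref{filtration} inductively to the factorization $x^{\loewy(I)}=x\cdot x^{\loewy(I)-1}$. The only difference is that where the paper cites \cite[Proposition~28.1]{HIO} for the middle inequality, you prove it directly via the colon-ideal exact sequence and the first difference of the Hilbert--Samuel function, which is a valid, self-contained substitute for that citation.
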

\begin{proof}
It is left to show that 
\[
\loewy(J) \eh (JR/(x)) \geq \eh(J).
\]
Observe that $x^{\loewy (J)} \in J$, so by  \cite[Proposition~28.1]{HIO}  
\[\eh(JR/(x^{\loewy (J)}))\geq \eh(JR)\]
and the claim follows since
$\eh (JR/(x^{\loewy (I)})) \leq \loewy(I) \eh (JR/(x))$ by Lemma \ref{filtration}.
\end{proof}

\begin{corollary}\label{reg case}
Inequality~\ref{ineq} holds for $\mf m$-full ideals in a regular local ring of positive dimension. 
\end{corollary}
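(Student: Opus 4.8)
The plan is to read the statement off directly from Corollary~\ref{loewy general}, once we record that a regular local ring has the smallest possible multiplicity. The only extra ingredient needed is the classical fact that if $(R,\m)$ is regular then $\eh(R) = 1$: the associated graded ring $\gr_{\m}(R)$ is a polynomial ring over the residue field, so the Hilbert--Samuel polynomial of $R$ coincides with that of a $d$-dimensional polynomial ring, whose normalized leading coefficient is $1$. This is the easy direction of the multiplicity-one characterization of regular local rings and requires no further work.

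With this in hand, I would apply Corollary~\ref{loewy general} to the given $\m$-full ideal $I$. Since $d > 0$ by hypothesis, the corollary supplies
\[
(d-1)!\,\eh(R)\,\loewy(I)\,(\mu(I) - d + 1) \geq \eh(I).
\]
Substituting $\eh(R) = 1$ collapses the left-hand side to $(d-1)!\,\loewy(I)\,(\mu(I)-d+1)$, which, up to the order of the factors, is precisely the left-hand side of Inequality~\ref{ineq}. Hence Inequality~\ref{ineq} holds for $I$, and by Proposition~\ref{go mfull} this is equivalent to its holding for all integrally closed $\m$-primary ideals as well.

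I expect no genuine obstacle here: all of the substantive work has already been carried out upstream. Passing to an infinite residue field, choosing a superficial general element $x$ with $\m I : x = I$ and $\eh(R/(x)) = \eh(R)$, invoking Watanabe's formula $\mu(I) = \mu(IR/(x)) + \length(R/(I,x))$ together with Lech's inequality, and then controlling the passage from $\eh(I)$ to $\eh(IR/(x))$ through the Loewy length via Lemma~\ref{filtration}, are all absorbed into Theorem~\ref{general mixed} and Corollary~\ref{loewy general}. The regular case is simply the point at which the singularity factor $\eh(R)$ that weakens the general inequality becomes trivial, so that the clean form of Inequality~\ref{ineq} emerges immediately.
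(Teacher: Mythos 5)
Your proposal is correct and matches the paper's intended argument exactly: Corollary~\ref{reg case} is stated without proof precisely because it is the instance $\eh(R)=1$ of Corollary~\ref{loewy general}, which is the substitution you perform. Your justification that regularity gives $\eh(R)=1$ via the associated graded ring, and the closing appeal to Proposition~\ref{go mfull}, are both accurate and add nothing beyond what the paper's setup already supplies.
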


\begin{remark}\label{rmLech}

For a local ring $(R,\mf m)$ of dimension $d$ we can define the Lech constant of $R$, $\lec(R)$ as the smallest value $c$ such that $cd!\length (R/J) \geq  \eh(J)$ for any $\mf m$-primary ideal $J$.  Lech's result shows that $\lec(R)\leq \eh(R)$. 
By looking at the asymptotic behavior, it is not hard to see  that $\lec(R)\geq 1$.  

Our  proof of Corollary~\ref{loewy general} really shows the following. Let $C$ be such that $C\geq \lec(R/(x))$ for a general $x \in \mf m$.  For any $\mf m$-primary integrally closed ideal $I$,
\[
C(d-1)!\loewy(I) (\mu(I) - d + 1) \geq \eh(I).
\] 

On the other hand, if $R$ is Cohen-Macaulay and $d\leq 1$, it is not hard to see that $\lec(R)=1$ if and only if $R$ is regular, so to get the best possible result for singular rings we need to use other methods. We shall do so in the next section.  
\end{remark}

\section{Small dimensions}\label{smdim}

In dimension at most two and $R$ is Cohen-Macaulay,  inequality~\ref{ineq} forces $R$ to have minimal multiplicity by letting $I= \mf m$  and using  Abhyankar's inequality 
(\cite[(1)]{Abhyankar}). In this section we will carefully study what happens in rings of dimension at most two and give necessary and sufficient conditions such that that inequality~\ref{ineq} always holds  for  $\mf m$-primary integrally closed ideals. In dimension three, we are able to show  that the inequality always hold if $R$ has a compound Du Val singularity. 

\subsection{Dimension one}

\begin{theorem}\label{dimension one}
Let $(R, \mf m)$ be a Cohen-Macaulay local ring of dimension one. Then the inequality 
\[
\mu(I) \loewy(I) \geq \eh(I)
\]
holds for every integrally closed ideal $I$ if and only if $R$ has minimal multiplicity, i.e., $\eh(R) = \mu (\m)$.
\end{theorem}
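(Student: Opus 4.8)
The plan is to work in the Cohen-Macaulay dimension-one setting, where I can make everything very concrete. First I would reduce, via Proposition~\ref{go mfull} and Remark~\ref{pass to infinite residue field}, to checking the inequality $\mu(I)\loewy(I)\geq \eh(I)$ for $\mf m$-full ideals over a ring with infinite residue field. In dimension one, for an $\mf m$-primary integrally closed ideal $I$ there is a single general (superficial) element $x\in\mf m$, and since $R$ is Cohen-Macaulay $x$ is a nonzerodivisor; the key identity from \cite[Theorem~2]{WatanabeJ} specializes to $\mu(I)=\mu(IR/(x))+\length(R/(I,x))$, and because $R/(x)$ is artinian of dimension zero we get $\mu(IR/(x))=0$ unless $I=(x)$ is a principal reduction, so the left side becomes governed entirely by $\length(R/(I,x))$. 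The crucial point in dimension one is that $\eh(I)=\eh((x))=\length(R/(x))$ whenever $x$ is a minimal reduction of $I$, so $\eh(I)$ is computed by any principal reduction.

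\textbf{The forward direction (minimal multiplicity implies the inequality).} Suppose $R$ has minimal multiplicity, i.e.\ $\eh(R)=\mu(\mf m)=\eh(R/(x))$ for a general $x$, which forces $\mf m^2=x\mf m$ and hence $\loewy(\mf m)$ to behave optimally. I would then take an arbitrary integrally closed $\mf m$-primary $I$ with minimal reduction $(x)$, so $\eh(I)=\length(R/(x))$, and express $\length(R/(x))=\length(R/I)+\length(I/(x))$. The goal reduces to bounding $\length(R/I)+\length(I/(x))$ by $\mu(I)\loewy(I)$. For this I would filter $R/I$ by powers of $\mf m$: since $\mf m^{\loewy(I)}\subseteq I$, one has $\length(R/I)\leq \sum_{i=0}^{\loewy(I)-1}\length(\mf m^i/\mf m^{i+1})$, and minimal multiplicity controls each graded piece because $\length(\mf m^i/\mf m^{i+1})\leq \eh(R)=\mu(\mf m)$ for all $i\geq 1$ in a one-dimensional ring of minimal multiplicity. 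Combining these length estimates with the generator count $\mu(I)$ (using that $I$ is $\mf m$-full, so $\mu(I)\geq\mu(J)$ for $J\supseteq I$) should yield the bound $\eh(I)\leq \mu(I)\loewy(I)$ after carefully tallying the contributions layer by layer.

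\textbf{The converse (the inequality forces minimal multiplicity).} For the converse I would simply test the inequality on $I=\mf m$ itself. Then $\loewy(\mf m)=1$, $\mu(\mf m)$ is the embedding-type generator count, and $\eh(\mf m)=\eh(R)$, so the inequality reads $\mu(\mf m)\cdot 1\geq \eh(R)$, i.e.\ $\mu(\mf m)\geq\eh(R)$. Since Abhyankar's inequality \cite[(1)]{Abhyankar} always gives $\mu(\mf m)\leq \eh(R)+d-1=\eh(R)$ in dimension one (the reverse inequality), the two together force $\mu(\mf m)=\eh(R)$, which is exactly minimal multiplicity. This direction is short and is really just the observation already flagged in the introduction.

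\textbf{Main obstacle.} The routine part is the converse; the delicate part is the forward direction, specifically the passage from the multiplicity $\eh(I)=\length(R/(x))$ to the product $\mu(I)\loewy(I)$. The hard point will be to correctly account for the interplay between how many layers $\mf m^i/\mf m^{i+1}$ contribute (governed by $\loewy(I)$) and the size of each layer (governed by minimal multiplicity and hence $\mu(\mf m)\leq\mu(I)$), while making sure the $\mf m$-fullness hypothesis is genuinely used to compare $\mu(I)$ with the number of generators of the intermediate ideals in the filtration. I expect that establishing the per-layer bound sharply enough — so that summing $\loewy(I)$ layers each of size at most $\mu(I)$ recovers $\eh(I)$ — is where the real content lies, and where minimal multiplicity is indispensable.
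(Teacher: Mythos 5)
Your converse direction (testing $I=\mf m$, where $\loewy(\mf m)=1$, and combining with Abhyankar's inequality $\mu(\mf m)\leq \eh(R)$) is correct and is exactly the paper's argument. The genuine gap is in the forward direction. Your plan is to write $\eh(I)=\length(R/(x))$ for a minimal reduction $x$ of $I$, split this as $\length(R/I)+\length(I/(x))$, bound $\length(R/I)\leq 1+(k-1)\mu(\mf m)$ via the $\mf m$-adic filtration (where $k=\loewy(I)$), and then tally. Once you have spent $1+(k-1)\mu(I)$ of the budget $k\mu(I)$ on the first summand, the tally forces you to prove $\length(I/(x))\leq \mu(I)-1$, and that bound is simply false. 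Take $R=k[[t^3,t^7,t^8]]$, a one-dimensional domain of minimal multiplicity $3$, and $I=\overline{(t^7)}=(t^7,t^8,t^9)$, the valuation ideal of elements of order at least $7$. Then $\mu(I)=3$, $x=t^7$ is a minimal reduction, and $I/(t^7)$ has $k$-basis $\{t^8,t^9,t^{11},t^{12}\}$, so $\length(I/(x))=4>2=\mu(I)-1$. The theorem still holds here ($\loewy(I)=3$, $\eh(I)=7\leq 9$), but only because $\length(R/I)=3$ sits far below your filtration bound $1+2\cdot 3=7$: the slack migrates between the two summands, so no fixed per-piece accounting of the kind you describe can close the argument. (A secondary slip in your setup: in Watanabe's identity $\mu(I)=\mu(IR/(x))+\length(R/(I,x))$ the element $x$ is a general element with $\mf mI:x=I$, not a reduction of $I$, and $\mu(IR/(x))=0$ only when $I\subseteq (x)$; artinian-ness of $R/(x)$ does not make ideals need zero generators.)

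The paper avoids the finite-level decomposition entirely and argues asymptotically: $\eh(I)=\length(I^n/I^{n+1})$ for $n\gg 0$, and $\mf m^kI^n\subseteq I^{n+1}$ gives
\[
\length(I^n/I^{n+1})\leq \mu(I^n)+\mu(\mf mI^n)+\cdots+\mu(\mf m^{k-1}I^n).
\]
The ingredient you are missing is the uniform generator bound $\mu(J)\leq \mu(\mf m)$ for \emph{every} $\mf m$-primary ideal $J$: with $x$ a minimal reduction of $\mf m$, one has $\mu(J)=\length(J/\mf mJ)\leq \length(J/xJ)=\eh(x;J)=\eh(\mf m;R)=\mu(\mf m)$, using that $J$ is a maximal Cohen-Macaulay module and that $R$ has minimal multiplicity; $\mf m$-fullness then gives $\mu(\mf m)\leq \mu(I)$, so the sum above is at most $k\mu(I)$. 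Note that your per-layer estimate bounds the lengths $\length(\mf m^i/\mf m^{i+1})$ of graded pieces of $R$, whereas what is needed is a bound on generator counts of the ideals $\mf m^iI^n$; these are different quantities, and the example above shows the difference is real. Your strategic skeleton can in fact be salvaged much more cheaply than either route: since $\mf m^k\subseteq I$, monotonicity of multiplicity gives $\eh(I)\leq \eh(\mf m^k)=k\eh(R)=k\mu(\mf m)\leq k\mu(I)$, with the last step again by $\mf m$-fullness --- but some such replacement of your length decomposition is indispensable.
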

\begin{proof}
To see that $R$ must have minimal multiplicity we apply the inequality to $I = \mf m$. 

For the other direction, observe that $\eh(I) = \length(I^n/I^{n+1})$ for $n\gg 0$. 
Let $k = \loewy (I)$ then $\mf m^kI^n \subseteq I^{n+1}$ for all $n$. Therefore
\[
\length(I^n/I^{n+1}) \leq \length (I^n/\mf m^k I^n) = 
\length (I^n/\mf m I^n) + \cdots + \length (\mf m^{k-1} I^n / \mf m^k I^n)
= \mu(I^n) + \mu(\mf m I^n) + \cdots + \mu (\mf m^{k-1} I^n).
\]

By Remark~\ref{pass to infinite residue field} we may assume that $R$ has an infinite residue field.
Let $x$ be a minimal reduction of $\mf m$ and observe that for any $\mf m$-primary ideal $J$
\[
\mu (J) = \length (J/\mf m J) \leq \length (J/xJ) = \eh (x, J) = \eh (\mf m, J)
\] 
since $J$ is a Cohen-Macaulay submodule of $R$. Since $R/J$ has dimension $0$, it follows that
$\eh (\mf m, J) = \eh (\mf m, R) = \mu(\mf m)$. 

The above inequality on the number of generators gives that
\[
\length(I^n/I^{n+1}) \leq k \mu(\mf m).
\] 
By Remark~\ref{pass m-full},  $\mu (I) \geq \mu (\mf m)$ and the claim follows.

\end{proof}

\begin{remark}
The proof shows that for any $\mf m$-primary ideal $J$ then $\mu(J) \leq \mu(\mf m)$. 
Since $\mu (\mf m) \leq \eh(R)$ by Abhyankar's inequality, this observation strengthens the classical result of 
Akizuki-Cohen (see also \cite[Corollary~2.3]{DCruzVerma}).

Also, if $J$ is $\mf m$-full, then  $\mu(J)=\eh(R)$ in any one dimensional local Cohen-Macaulay ring with minimal multiplicity.
\end{remark}

\subsection{Dimension 2}

In dimension two, we shall show that when $R$ is analytically unramified, then inequality \ref{ineq} holds for all $\mf m$-primary, integrally closed ideals if and only if $\mf m$ is normal  and $R$ has minimal multiplicity. We employ the techniques developed by David Rees (\cite{ReesRat}),
which  allow us to exploit properties of  mixed multiplicities.

\begin{definition}
Let $(R, \mf m)$ be a two-dimensional local ring and $I, J$ be $\mf m$-primary ideals. 
Following \cite{ReesSharp}
the mixed multiplicity of $I$ and $J$, $\eh (I \mid J)$  is defined by the equation
\[
\eh (I^rJ^s) = \eh(I)r^2 + 2\eh(I \mid J) rs + \eh(J) s^2.
\]
If the residue field of $R$ is infinite, then $\eh(I \mid J) = R/(a,b)$ where $a \in I, b \in J$ are general elements (\cite{Teissier}).
It follows from the definition that $\eh (I^n \mid J) = n \eh (I \mid J)$ for any integer $n \geq 1$.
\end{definition}

We start with an easy application of the machinery developed by Rees in \cite{ReesRat}.
We encourage the reader to look at \cite{VermaComplete} where a major part of Rees's argument
is rewritten in a more accessible way.

\begin{lemma} \label{Rees lemma}
Let $(R, \mf m)$ be an analytically unramified Cohen-Macaulay local ring of dimension two.
If $J$ is a normal ideal with reduction number $1$ then for any integrally closed ideal $I$
\[
\eh (I \mid J) = \length (R/IJ) - \length (R/I) - \length (R/J).
\]
\end{lemma}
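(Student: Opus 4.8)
The plan is to read the right-hand side as the operation of extracting the mixed-multiplicity coefficient from a one-parameter Hilbert function, and then to show, using the hypotheses, that this Hilbert function is already linear at the very bottom. First I would pass to an infinite residue field (Remark~\ref{pass to infinite residue field}), which changes none of the quantities involved. Recall that $\length(R/I^rJ^s)$ agrees for $r,s\gg 0$ with a polynomial whose degree-two part is $\tfrac12\bigl(\eh(I)r^2+2\eh(I\mid J)rs+\eh(J)s^2\bigr)$, in accordance with the definition of $\eh(I\mid J)$. Setting $r=1$ and subtracting the $J$-adic Hilbert function, I would study
\[
g(s):=\length(R/IJ^s)-\length(R/J^s)=\length(J^s/IJ^s),
\]
and note that $g$ is eventually linear in $s$ with slope exactly $\eh(I\mid J)$ (the quadratic terms cancel). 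Since $g(0)=\length(R/I)$ and $g(1)=\length(R/IJ)-\length(R/J)$, the asserted identity is \emph{equivalent} to the statement that $g$ is linear for all $s\ge 0$: the secant slope $g(1)-g(0)$ is then forced to equal the eventual slope $\eh(I\mid J)$, which rearranges to the formula.

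To establish linearity I would compare $g$ with the $J$-adic Hilbert function term by term. The two short exact sequences $0\to J^{s+1}/IJ^{s+1}\to J^s/IJ^{s+1}\to J^s/J^{s+1}\to 0$ and $0\to IJ^s/IJ^{s+1}\to J^s/IJ^{s+1}\to J^s/IJ^s\to 0$ yield
\[
g(s+1)-g(s)=\length(IJ^s/IJ^{s+1})-\length(J^s/J^{s+1}).
\]
Because $J$ has reduction number one in the Cohen-Macaulay ring $R$, its associated graded ring $\gr_J(R)$ is Cohen-Macaulay with $h$-polynomial of degree at most one, so $\length(J^s/J^{s+1})=\eh(J)\,s+\length(R/J)$ for every $s\ge 0$. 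Consequently the second difference of $g$ vanishes for all $s\ge 1$ precisely when $h_I(s):=\length(IJ^s/IJ^{s+1})$ satisfies $h_I(s)-h_I(s-1)=\eh(J)$ for all $s\ge 1$, i.e. when $h_I$ is linear from the start. Choosing a minimal reduction $(a,b)\subseteq J$ with $J^{s+1}=(a,b)J^s$ for $s\ge 1$ (so $IJ^{s+1}=(a,b)IJ^s$), this is exactly the statement that the $J$-filtration $\{IJ^s\}$ on $I$ has reduction number at most one with Cohen-Macaulay associated graded module $\gr_J(I)$.

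The main obstacle is this last linearity, and it is where every hypothesis is spent. I would invoke the machinery of Rees from \cite{ReesRat}, in the accessible form of \cite{VermaComplete}: the analytically unramified hypothesis provides control of integral closures through the Rees valuations, and together with $I$ integrally closed and $J$ normal this is what forces the Valabrega-Valla intersection condition $IJ^s\cap (a,b)I=(a,b)IJ^{s-1}$ for all $s\ge 1$ (equivalently, the good behaviour of the products $IJ^s$), from which Cohen-Macaulayness of $\gr_J(I)$ and hence linearity of $h_I$ follow. I expect verifying this intersection/integral-closure condition to be the only substantive step; once it is in hand, evaluating $g(1)-g(0)=\eh(I\mid J)$ gives the formula directly. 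As a consistency check I would also confirm the identity against Teissier's description $\eh(I\mid J)=\length(R/(a,b))$ for general $a\in I$, $b\in J$, and against the trivial case $I=J$.
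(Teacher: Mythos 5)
Your reduction of the identity to linearity of $g(s)=\length(J^s/IJ^s)$ is internally consistent (the two exact sequences and the formula $g(s+1)-g(s)=\length(IJ^s/IJ^{s+1})-\length(J^s/J^{s+1})$ are correct, as is the Hilbert function $\length(J^s/J^{s+1})=\eh(J)s+\length(R/J)$ for a reduction-number-one ideal), but the argument has a genuine gap at its very first load-bearing step: the claim that the eventual slope of $g$ is $\eh(I\mid J)$. You obtain this by ``setting $r=1$'' in the Bhattacharya polynomial for $\length(R/I^rJ^s)$, but that polynomial only agrees with the length function for $r,s\gg 0$, and the substitution is invalid. In fact the claim is false at the level of generality at which you assert it: take $R=k[[x,y]]$, $I=\mf m$, $J=(x^2,y^2)$. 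Then $g(s)=\mu(J^s)=s+1$ has slope $1$, while $\eh(\mf m\mid J)=\eh(\mf m\mid \overline{J})=\eh(\mf m\mid \mf m^2)=2$, since mixed multiplicities only depend on integral closures. So whatever truth the slope identification has under the hypotheses of Lemma~\ref{Rees lemma} must use normality of $J$ and integral closedness of $I$ in an essential way --- it is not a formal consequence of Hilbert polynomial theory, and proving it is essentially of the same depth as the lemma itself.

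The second load-bearing step --- the Valabrega--Valla-type condition $IJ^s\cap(a,b)I=(a,b)IJ^{s-1}$, or equivalently the good multiplicative behaviour of the products $IJ^s$ --- is exactly where you say you would ``invoke the machinery of Rees,'' but you do not carry this out, and it is precisely the content of the paper's proof. The paper's argument is much shorter than your outline suggests: by \cite[Theorems 2.5 and 2.6]{ReesRat}, normality and reduction number one of $J$ together with $I=\overline{I}$ give directly that $\overline{IJ}=IJ=bI+aJ$ for general $a\in I$, $b\in J$; then the length formula $\length(R/(bI+aJ))-\length(R/(a,b))=\length(R/I)+\length(R/\overline{J})$ extracted from the proof of \cite[Theorem 2.1]{ReesRat}, combined with Teissier's identity $\eh(I\mid J)=\length(R/(a,b))$ for general elements, yields the formula in one step --- no Hilbert functions of the filtration $\{IJ^s\}$, no associated graded module, and no slope identification are needed. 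In short: your plan correctly isolates what must be true, but both steps that carry the mathematical content are left unproven, and your stated justification for one of them is wrong; to complete the proof along your lines you would end up re-deriving the same results of Rees that the paper quotes, at which point the $s=1$ instance $IJ=bI+aJ$ already finishes the argument.
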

\begin{proof}
First, we can extend the residue field without changing the inequality (Remark~\ref{pass to infinite residue field}).
Observe that $J$ satisfies condition (ii) of \cite[Theorem~2.6]{ReesRat}. Hence, using \cite[Theorems~2.5 and 2.6]{ReesRat}, we obtain that 
\[\overline {IJ} = IJ = bI + aJ\] 
where $a \in I, b \in J$ are general elements.
Now, in the notation of \cite[Theorem~2.1]{ReesRat} with $\mf c = R$ 
and using  the formula 
\[
 \length (R/( bI + aJ)) -\length (R/(a,b))  = \length (N_1) = \length (R/I) + \length (R/\overline{J})
\] 
obtained in its proof, we have that
\[
\eh(I\mid J) := \length (R/(a,b))  = \length (R/IJ) - \length (R/I) - \length (R/J).
\]
\end{proof}

As a corollary we generalize a result of Okuma, Watanabe, and Yoshida (\cite[Theorem~6.1]{OWY1}).

\begin{corollary}
Let  $(R, \mf m)$ be a two-dimensional analytically unramified Cohen-Macaulay local ring with minimal multiplicity.
If $\mf m$ is normal, then for any integrally closed ideal $I$
\[
\mu(I) = \eh (I \mid \mf m) + 1.
\]
\end{corollary}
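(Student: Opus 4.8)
The plan is to specialize Lemma~\ref{Rees lemma} to the case $J = \m$. Before doing so I would pass to an infinite residue field via Remark~\ref{pass to infinite residue field}, which changes none of $\mu(I)$, $\eh(I\mid\m)$, or the relevant lengths. To invoke the lemma I must verify its two hypotheses for $J = \m$: that $\m$ is normal and that $\m$ has reduction number one. The former is assumed outright. For the latter I would use the standard characterization of minimal multiplicity for Cohen-Macaulay rings, namely that $\eh(R) = \mu(\m) - d + 1$ holds precisely when $\m^2 = Q\m$ for a (equivalently, any) minimal reduction $Q$ of $\m$; this is exactly the statement that $\m$ has reduction number one. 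Thus $J = \m$ satisfies both hypotheses.

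With both conditions in hand, Lemma~\ref{Rees lemma} yields
\[
\eh(I \mid \m) = \length(R/I\m) - \length(R/I) - \length(R/\m).
\]
From here the identity is immediate. We have $\length(R/\m) = 1$, and the short exact sequence $0 \to I/I\m \to R/I\m \to R/I \to 0$ shows that $\length(R/I\m) - \length(R/I) = \length(I/I\m) = \mu(I)$ by Nakayama's lemma. Substituting these into the displayed formula collapses the right-hand side to $\mu(I) - 1$, giving the claimed equality $\mu(I) = \eh(I\mid\m) + 1$.

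The substance of the argument is entirely concentrated in Lemma~\ref{Rees lemma}; the only additional ingredient is the translation of minimal multiplicity into reduction number one, which I expect to be the sole step requiring a citation rather than a direct computation. Everything after invoking the lemma is a one-line substitution, so I do not anticipate a genuine obstacle here — the real work was already carried out in establishing Lemma~\ref{Rees lemma} via Rees's machinery.
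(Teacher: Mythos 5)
Your proof is correct and is exactly the paper's argument: the paper's entire proof reads ``Let $J=\m$ in the previous lemma,'' and your write-up simply makes explicit the verification that minimal multiplicity gives $\m$ reduction number one and the length computation $\length(R/I\m) - \length(R/I) - \length(R/\m) = \mu(I) - 1$. No gaps; the added detail is a faithful expansion of what the paper leaves implicit.
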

\begin{proof}
Let $J=\m$ in the previous lemma. 
\end{proof}

\begin{corollary} \label{minimal multiplicity}
Let  $(R, \mf m)$ be a two-dimensional analytically unramified Cohen-Macaulay local ring with minimal multiplicity.
If $\mf m$ is normal, then for any integrally closed ideal $I$
\[
\eh (I) \leq \loewy (R/I)\eh(I \mid \mf m) = \loewy (R/I) (\mu(I) - 1).
\]
\end{corollary}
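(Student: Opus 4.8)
The plan is to notice that the asserted equality $\loewy(R/I)\,\eh(I\mid \m)=\loewy(R/I)(\mu(I)-1)$ is nothing but the preceding corollary ($\mu(I)=\eh(I\mid\m)+1$), so the entire content of the statement is the single inequality $\eh(I)\le \loewy(R/I)\,\eh(I\mid\m)$. I would prove this last inequality in \emph{any} two--dimensional local ring using only formal properties of mixed multiplicities; the special hypotheses (minimal multiplicity, $\m$ normal, analytically unramified) enter only to rewrite $\eh(I\mid\m)$ as $\mu(I)-1$ through the previous corollary.

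Write $k=\loewy(R/I)$, so that $\m^k\subseteq I$ by the very definition of the Loewy length. The key chain is
\[
\eh(I)=\eh(I\mid I)\ \le\ \eh(\m^k\mid I)\ =\ k\,\eh(\m\mid I)\ =\ k\,\eh(I\mid\m).
\]
Here the first equality comes from the defining expansion with both ideals equal to $I$: since $\eh(I^{r+s})=(r+s)^2\eh(I)=\eh(I)r^2+2\eh(I)rs+\eh(I)s^2$, its cross coefficient gives $\eh(I\mid I)=\eh(I)$. The middle inequality is monotonicity of the mixed multiplicity in its first slot, applied to the inclusion $\m^k\subseteq I$ with the second slot fixed at $I$. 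The next equality is the homogeneity relation $\eh(\m^k\mid I)=k\,\eh(\m\mid I)$ recorded right after the definition, and the last equality is the symmetry $\eh(\m\mid I)=\eh(I\mid\m)$, immediate from the symmetric defining equation. Combining the chain with $\eh(I\mid\m)=\mu(I)-1$ finishes the proof.

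The one ingredient that is not purely formal, and hence the point I would take care to nail down, is the monotonicity of mixed multiplicities: if $A\subseteq B$ are $\m$--primary then $\eh(A\mid I)\ge\eh(B\mid I)$. I would deduce it from ordinary monotonicity of multiplicity under inclusion, $\eh(\mf a)\ge\eh(\mf b)$ whenever $\mf a\subseteq\mf b$; applying this to $A^rI^s\subseteq B^rI^s$ yields, for all $r,s\ge1$,
\[
\eh(A)r^2+2\,\eh(A\mid I)rs+\eh(I)s^2\ \ge\ \eh(B)r^2+2\,\eh(B\mid I)rs+\eh(I)s^2,
\]
and fixing $r=1$ while letting $s\to\infty$ forces $\eh(A\mid I)\ge\eh(B\mid I)$. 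It is worth flagging the tempting false start: the Teissier--Rees--Sharp (Minkowski) inequality gives $\eh(I\mid\m)^2\le\eh(I)\,\eh(\m)$, which bounds $\eh(I\mid\m)$ from \emph{above} and therefore points the wrong way. The correct engine is instead monotonicity in the second variable together with the homogeneity $\eh(I\mid\m^k)=k\,\eh(I\mid\m)$: together they say precisely that replacing the second ideal $I$ by the smaller ideal $\m^k\subseteq I$ can only increase the mixed multiplicity, at the controlled linear cost $k=\loewy(R/I)$.
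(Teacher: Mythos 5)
Your proof is correct and is essentially the paper's own argument: the paper's one-line proof is exactly your chain $\eh(I)=\eh(I\mid I)\le \eh(I\mid \m^{k})=k\,\eh(I\mid \m)$ (your version just swaps the slots via symmetry) followed by the preceding corollary $\mu(I)=\eh(I\mid\m)+1$. The only difference is that you spell out the monotonicity of mixed multiplicities under inclusion, which the paper uses silently; your limit argument for it (cancel the $\eh(I)s^2$ terms, divide by $s$, let $s\to\infty$) is valid given the paper's defining equation $\eh(I^rJ^s)=\eh(I)r^2+2\eh(I\mid J)rs+\eh(J)s^2$.
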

\begin{proof}
Since $\mf m^{\loewy (R/I)} \subseteq I$ we have
\[
\eh(I) = \eh (I \mid I) \leq \eh (I \mid \mf m^{\loewy(R/I)}) = \loewy(R/I) \eh (I \mid \mf m)
\]
and we may use the preceding corollary.
\end{proof}

We have just obtained a class of analytically unramified rings that satisfy our inequality. The converse also holds, but will require more work.

\begin{lemma}\label{numgen bound}
Let $(R, \mf m)$ be a Cohen-Macaulay local ring of dimension $2$ with infinite residue field. 
Then $\mu (\overline{\mf m^n}) \leq n \eh (R) + 1$. If equality occurs, then $(\overline{\mf m^n}, x) = ({\mf m^n}, x)$ for a general element $x \in \m$.
\end{lemma}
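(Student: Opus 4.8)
The plan is to reduce the statement to the one-dimensional quotient $S = R/(x)$ for a sufficiently general $x \in \mf m$. As in the proof of Theorem~\ref{general mixed}, a general $x$ can be taken superficial for $\mf m$, so that $S$ is a one-dimensional Cohen--Macaulay ring with $\dim S = 1$ and $\eh(S) = \eh(R)$, and (having passed to an infinite residue field by Remark~\ref{pass to infinite residue field}) it also witnesses the $\mf m$-fullness of the integrally closed ideal $\overline{\mf m^n}$ via Remark~\ref{pass m-full}. The key reformulation I would use is that Watanabe's formula $\mu(\overline{\mf m^n}) = \mu(\overline{\mf m^n}S) + \length(S/\overline{\mf m^n}S)$ from \cite[Theorem~2]{WatanabeJ} collapses, via the short exact sequence $0 \to \mathfrak b/\mf m S\,\mathfrak b \to S/\mf m S\,\mathfrak b \to S/\mathfrak b \to 0$ with $\mathfrak b = \overline{\mf m^n}S$, into the single clean identity $\mu(\overline{\mf m^n}) = \length(S/\mf m\,\overline{\mf m^n}\,S)$.

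Next I would exploit the containment $\mf m^{n+1}S \subseteq \mf m\,\overline{\mf m^n}\,S$, which is immediate from $\mf m^n \subseteq \overline{\mf m^n}$. This yields
\[
\mu(\overline{\mf m^n}) \leq \length(S/\mf m^{n+1}S) = \sum_{k=0}^{n} \mu(\mf m^k S),
\]
the last equality being the Hilbert-function decomposition of the Artinian ring $S/\mf m^{n+1}S$. To finish I would invoke the uniform bound $\mu(J) \leq \eh(R)$, valid for every $\mf m S$-primary ideal $J$ of the one-dimensional Cohen--Macaulay ring $S$: this is precisely the minimal-reduction computation carried out in the proof of Theorem~\ref{dimension one}, which only needs $\eh(\mf m S\mid J) = \eh(S) = \eh(R)$ and not minimal multiplicity of $S$. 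Applying it to $J = \mf m^k S$ for $1 \leq k \leq n$ and using $\mu(S) = 1$ gives $\sum_{k=0}^{n} \mu(\mf m^k S) \leq 1 + n\eh(R)$, which is the asserted inequality.

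For the equality statement, equality throughout forces $\length(S/\mf m\,\overline{\mf m^n}\,S) = \length(S/\mf m^{n+1}S)$, and since $\mf m^{n+1}S \subseteq \mf m\,\overline{\mf m^n}\,S$ this means $\mf m\,\overline{\mf m^n}\,S = \mf m^{n+1}S$. Then the finitely generated $S$-module $\overline{\mf m^n}S/\mf m^n S$ satisfies $\mf m S\cdot(\overline{\mf m^n}S/\mf m^n S) = (\mf m\,\overline{\mf m^n}\,S + \mf m^n S)/\mf m^n S = 0$, because $\mf m\,\overline{\mf m^n}\,S = \mf m^{n+1}S \subseteq \mf m^n S$; by Nakayama's lemma $\overline{\mf m^n}S = \mf m^n S$, that is, $(\overline{\mf m^n}, x) = (\mf m^n, x)$ in $R$.

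The step I expect to require the most care is arranging that a single general $x$ serves all roles simultaneously --- superficial for $\mf m$ (so that $\dim S = 1$ and $\eh(S) = \eh(R)$) and a valid test element for $\mf m$-fullness in Watanabe's formula --- but each of these is satisfied by a general element, so their common intersection is again general. Once $x$ is fixed, the remainder is the routine length bookkeeping above, with the only genuinely structural input being the one-dimensional generator bound $\mu(J) \leq \eh(R)$ borrowed from the proof of Theorem~\ref{dimension one}.
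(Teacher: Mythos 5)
Your proof of the inequality is correct and is essentially the paper's own argument: both rest on Watanabe's formula for the $\mf m$-full ideal $\overline{\mf m^n}$ together with the one-dimensional generator bound $\mu(J) \leq \eh(S) = \eh(R)$ extracted from the proof of Theorem~\ref{dimension one} (and you are right that this bound does not need minimal multiplicity). Your collapse of Watanabe's two terms $\mu(\overline{\mf m^n}) = \length(S/\overline{\mf m^n}S) + \mu(\overline{\mf m^n}S)$ into the single length $\length(S/\mf m\,\overline{\mf m^n}\,S)$ is a harmless repackaging as far as the inequality goes.

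It is, however, exactly this repackaging that breaks your treatment of the equality case, and the Nakayama step there is a genuine error. From equality you correctly extract $\mf m\,\overline{\mf m^n}\,S = \mf m^{n+1}S$, and you then observe that $\mf m S \cdot (\overline{\mf m^n}S/\mf m^n S) = 0$ --- but Nakayama's lemma requires $\mf m M = M$, not $\mf m M = 0$: a nonzero $k$-vector space is killed by $\mf m$, so nothing follows. Worse, the implication you need is false in general: nested ideals can have equal $\mf m$-multiples, e.g.\ in $S = k[[t^2,t^3]]$ the ideals $(t^2) \subsetneq (t^2,t^3)$ both have $\mf m$-multiple $(t^4,t^5)$. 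Two repairs are available. (i) The paper's route: do not merge the two terms of Watanabe's formula; equality in the total then forces the termwise equality $\length(S/\overline{\mf m^n}S) = \length(S/\mf m^n S)$, and nested ideals of equal colength coincide. (ii) Within your merged setup: equality in your chain also forces $\mu(\mf m^n S) = \eh(R)$, and since $\mf m\,\overline{\mf m^n}\,S = \mf m^{n+1}S$,
\[
\length\left(\overline{\mf m^n}S/\mf m^n S\right) = \length\left(\overline{\mf m^n}S/\mf m^{n+1}S\right) - \length\left(\mf m^n S/\mf m^{n+1}S\right) = \mu\left(\overline{\mf m^n}S\right) - \mu\left(\mf m^n S\right) \leq \eh(R) - \eh(R) = 0,
\]
using the one-dimensional bound $\mu(\overline{\mf m^n}S) \leq \eh(R)$ once more; hence $\overline{\mf m^n}S = \mf m^n S$, i.e.\ $(\overline{\mf m^n}, x) = (\mf m^n, x)$. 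Your handling of the general element $x$ (simultaneously superficial and an $\mf m$-fullness witness) is fine.
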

\begin{proof}
Since $\overline{\mf m^n}$ is $\mf m$-full, by a result of Watanabe (\cite{WatanabeJ}) for a general element $x$
\[
\mu (\overline{\mf m^n}) = \length (R/(\overline{\mf m^n}, x)) + \mu (\overline{\mf m^n}R/(x)).
\]
Since $R/(x)$ has dimension $1$, by the proof of Theorem~\ref{dimension one},
$\mu (I) \leq \eh (R/(x)) = \eh(R)$ for any ideal $I$ of $R/(x)$.
Thus we can easily estimate
\[
\length (R/(\overline{\mf m^n}, x)) \leq \length (R/({\mf m^n}, x))= 1 + \mu (\mf mR/(x)) + \cdots + \mu (\mf m^{n-1}R/(x)) \leq 1 + (n-1)\eh(R)
\]
and the desired inequality follows. Equality would force $\length (R/(\overline{\mf m^n}, x)) = \length (R/({\mf m^n}, x))$, thus $(\overline{\mf m^n}, x) = ({\mf m^n}, x)$.
\end{proof}

Combining with the  results of Huneke and Itoh we can prove the converse of Corollary~\ref{minimal multiplicity}.

\begin{theorem}\label{get normal}
Let $(R, \mf m)$ be an analytically unramified Cohen-Macaulay local ring of dimension $2$ and minimal multiplicity. 
If for all $n$
\[
\mu(\overline{\mf m^n}) = \mu (\mf m^n) = n\eh(R) + 1
\]
then $\mf m$ is normal.
\end{theorem}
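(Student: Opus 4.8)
The plan is to prove directly that $\overline{\mf m^n} = \mf m^n$ for every $n$, which is exactly the statement that $\mf m$ is normal, and to do so by induction on $n$. By Remark~\ref{pass to infinite residue field} I may assume the residue field is infinite and fix a general element $x \in \mf m$. Since the hypothesis provides $\mu(\overline{\mf m^n}) = n\eh(R) + 1$, equality holds in Lemma~\ref{numgen bound}, so its equality clause gives $(\overline{\mf m^n}, x) = (\mf m^n, x)$ for every $n$. Writing an arbitrary $z \in \overline{\mf m^n}$ as $z = m + rx$ with $m \in \mf m^n \subseteq \overline{\mf m^n}$ forces $rx = z - m \in \overline{\mf m^n} \cap (x)$, whence
\[
\overline{\mf m^n} = \mf m^n + \bigl(\overline{\mf m^n} \cap (x)\bigr).
\]
Thus everything reduces to understanding the intersection $\overline{\mf m^n} \cap (x)$. (Note that minimal multiplicity already forces $\mu(\mf m^n) = n\eh(R)+1$ automatically, so the genuine content of the hypothesis is the statement about the integral closures.)

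The crucial input is the colon formula $\overline{\mf m^n} : x = \overline{\mf m^{n-1}}$; since $x$ is a nonzerodivisor this is equivalent to
\[
\overline{\mf m^n} \cap (x) = x\,\overline{\mf m^{n-1}} \qquad \text{for all } n \geq 1.
\]
Granting this, the induction closes at once: assuming $\overline{\mf m^{n-1}} = \mf m^{n-1}$, the displayed reduction gives
\[
\overline{\mf m^n} = \mf m^n + x\,\overline{\mf m^{n-1}} = \mf m^n + x\,\mf m^{n-1} = \mf m^n,
\]
with the base cases $n \leq 1$ trivial. To establish the colon formula I would invoke the results of Huneke and Itoh on the integral closure filtration in dimension two: for a two-dimensional analytically unramified Cohen--Macaulay local ring the normal tangent cone $G = \bigoplus_{n\geq 0}\overline{\mf m^n}/\overline{\mf m^{n+1}}$ has positive depth, so the initial form $x^{*}$ of a general $x$ is a nonzerodivisor on $G$. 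Via the Valabrega--Valla criterion this nonzerodivisor property is precisely $\overline{\mf m^n}\cap(x) = x\,\overline{\mf m^{n-1}}$ for every $n$. Since both this and the equality clause of Lemma~\ref{numgen bound} hold for a general $x$, a single sufficiently general $x$ serves both purposes.

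The main obstacle is exactly this colon formula, and specifically the requirement that it hold for \emph{all} $n$ rather than only for $n \gg 0$. Superficiality of a general $x$ for the normal filtration is elementary and already yields $\overline{\mf m^n}:x = \overline{\mf m^{n-1}}$ in high degrees; the real work is excluding a drop in low degrees, and this is where analytic unramifiedness, the Cohen--Macaulay hypothesis, and the restriction to dimension two are genuinely used, through the positive-depth statement for the normal tangent cone due to Huneke and Itoh. Once that input is in place the remainder of the argument is purely formal.
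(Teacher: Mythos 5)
Your proof is correct, but it takes a genuinely different route from the paper's. The paper uses the hypothesis essentially only at $n=2$: equality there, via Lemma~\ref{numgen bound}, gives $\overline{\mf m^2} \subseteq \mf m^2 + (x) \subseteq (x,y)$, the Huneke--Itoh intersection theorem then yields $\overline{\mf m^2} = (x,y)\mf m = \mf m^2$, and normality of all higher powers is extracted from normal Hilbert-coefficient machinery: Itoh's upper bound on $\length(R/\overline{\mf m^n})$, Huneke's result that $\overline{u}_n = \overline{P(n)} - \length(R/\overline{\mf m^n})$ is non-negative and non-increasing (forcing $\overline{u}_n = 0$), and his theorem converting this into $\overline{\mf m^{n+1}} = (x,y)\overline{\mf m^n}$ for $n \geq 2$. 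You instead consume the hypothesis at every $n$ and run a direct ideal-theoretic induction, trading the Hilbert-coefficient input for the colon formula $\overline{\mf m^n} : x = \overline{\mf m^{n-1}}$; your derivation of $\overline{\mf m^n} = \mf m^n + \bigl(\overline{\mf m^n} \cap (x)\bigr)$ from the equality clause is exactly right, and note that you do not even need a single $x$ to serve all $n$ at once, since the induction at step $n$ only requires an $x$ general for that step (the inductive hypothesis $\overline{\mf m^{n-1}} = \mf m^{n-1}$ is $x$-free). Each approach buys something: yours is more self-contained once the colon formula is granted, while the paper's proves the a priori stronger fact that the hypothesis at $n=2$ alone suffices.

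Two remarks on your key input. The colon formula is true, but the cleanest proof is elementary and uses neither dimension two, nor Cohen--Macaulayness, nor positive depth of the normal tangent cone: since $R$ is reduced (being analytically unramified), $\overline{\mf m^n} = \{\, z : v(z) \geq n\, v(\mf m) \text{ for every Rees valuation } v \text{ of } \mf m \,\}$, and a general $x$ satisfies $v(x) = v(\mf m)$ for each of the finitely many such $v$ (the elements of larger value form an ideal whose image in $\mf m/\mf m^2$ is a proper subspace, by Nakayama), so $xz \in \overline{\mf m^n}$ forces $v(z) \geq (n-1)v(\mf m)$ for all $v$, i.e.\ $z \in \overline{\mf m^{n-1}}$. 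So, contrary to your closing paragraph, the standing hypotheses are genuinely spent in Lemma~\ref{numgen bound} (through Watanabe's formula and the dimension-one bound $\mu \leq \eh(R)$), not in the colon formula. Moreover, if you do route the argument through $\depth \overline{G} \geq 1$ for $\overline{G} = \bigoplus_n \overline{\mf m^n}/\overline{\mf m^{n+1}}$, beware that $\overline{G}$ need not be standard graded, so positive depth does not formally produce a degree-one nonzerodivisor to feed into Valabrega--Valla; the valuation argument above sidesteps this technicality entirely.
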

\begin{proof}
Since we may assume that the residue field is infinite, so 
two general elements $x, y \in \mf m$ will form a minimal reduction of $\mf m$.

By hypothesis $\mu (\overline{\mf m^2}) = 2\eh(R) + 1$, so  Lemma~\ref{numgen bound} shows that 

\[\overline {\mf m^2} \subseteq \mf m^2 + (x) \subseteq (x,y)\]
and, by the Huneke-Itoh theorem (\cite[Theorem~1]{Itoh}), 
\[
\overline {\mf m^2} =  \overline {\mf m^2} \cap (x,y) = (x,y)\mf m \subseteq \mf m^2.
\]
Thus $\mf m^2$ is integrally closed.

It was shown by Itoh in \cite[Proposition~10]{Itoh}, that
\[
\length (R/\overline{\mf m^n}) \leq \eh (R) \binom{n + 1}{2} - 
\left (\eh(R) - 1 + \length \left (\overline{\mf m^2}/(x,y)\mf m \right ) \right) n + 
\length \left (\overline{\mf m^2}/(x,y)\mf m  \right).
\]
Of course, we already know that $\overline{\mf m^2} = \mf m^2 = (x,y)\mf m$, so the above inequality gives us 
that the normal Hilbert polynomial $\overline{P(n)}$ is bounded above by the minimal multiplicity polynomial
$\eh (R) \binom{n + 1}{2} - (\eh(R) - 1)n$. 

Let $\overline{u}_n = \overline{P(n)} - \length (R/\overline{\mf m^n})$.
By a result of Huneke (\cite[Theorem~4.5(iii)]{Huneke}) we know that $\overline{u}_n$ is non-negative and non-increasing.
If we let $n = 2$ and use that
\[\length (R/\overline{\mf m^2}) = \length (R/\mf m^2) = \eh (R) + 2,\] 
then we see that
\[
0 = \eh(R) \binom{3}{2} - (\eh(R) - 1)2 - \eh (R) - 2
\geq \overline{u}_2 \geq 0,
\]
so $\overline{u}_2 = 0$. Thus $\overline{u}_n = 0$ for any $n$, and
by \cite[Theorem~4.4]{Huneke} this implies that $\overline{\mf m^{n+1}} = (x,y) \overline{\mf m^n}$ for all $n \geq 2$
which allows us to show by induction that $\mf m^{n+1}$ is integrally closed.

\end{proof}

\begin{corollary}\label{unramified criterion}
Let $(R, \mf m)$ be an unramified local Cohen-Macaulay ring of dimension $2$. 
Then the following conditions are equivalent.
\begin{enumerate}[(a)]
\item $R$ has minimal multiplicity  and $\mf m$ is normal.
\item $\eh(I) \leq \loewy(R/I) (\mu (I) - 1)$
for every $\mf m$-full $\mf m$-primary ideal $I$.
\item $\mu(I) = \eh(\mf m \mid I) +  1$ for every integrally closed $\mf m$-primary ideal $I$.
\end{enumerate}
\end{corollary}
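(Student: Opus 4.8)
The plan is to prove the three-way equivalence by establishing a cycle of implications $(a) \Rightarrow (b) \Rightarrow (c) \Rightarrow (a)$, since the hard direction $(a) \Rightarrow (b)$ is already available and the remaining links are comparatively light. For $(a) \Rightarrow (b)$ I would invoke Corollary~\ref{minimal multiplicity} directly: under minimal multiplicity and normality of $\mf m$ that corollary gives $\eh(I) \leq \loewy(R/I)(\mu(I) - 1)$ for every integrally closed $\mf m$-primary $I$, and by Proposition~\ref{go mfull} passing from integrally closed to $\mf m$-full ideals is harmless, so the inequality holds for all $\mf m$-full $\mf m$-primary ideals as well. (Note that an unramified Cohen-Macaulay ring is in particular analytically unramified, so the hypotheses of Corollary~\ref{minimal multiplicity} are met.)

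For $(b) \Rightarrow (a)$ the strategy is to specialize $(b)$ to the powers $I = \overline{\mf m^n}$ and then feed the resulting numerical constraint into Theorem~\ref{get normal}. First, taking $I = \mf m$ in $(b)$ forces $\eh(\mf m) \leq \mu(\mf m) - 1 + (\text{Loewy term})$; more simply, $I=\m$ yields $\eh(R) = \eh(\m) \le \loewy(R/\m)(\mu(\m)-1) = \mu(\m)-1 < \mu(\m)$, which combined with Abhyankar's inequality $\mu(\m) \le \eh(R)$ (wait --- here $\loewy(R/\m)=1$, so this reads $\eh(R)\le \mu(\m)-1$, contradicting Abhyankar unless we are careful) forces minimal multiplicity after accounting for the $-1$; I would check this boundary case carefully since the $-1$ shift is exactly what makes it equivalent to $\mu(\m)=\eh(R)$. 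Then, to obtain normality, I would apply $(b)$ to each $I=\overline{\mf m^n}$, whose Loewy length is $n$, to deduce $\eh(\overline{\mf m^n}) = \eh(\mf m^n) = n^2\eh(R) \leq n(\mu(\overline{\mf m^n}) - 1)$, hence $\mu(\overline{\mf m^n}) \geq n\eh(R) + 1$; combined with the reverse bound $\mu(\overline{\mf m^n}) \leq n\eh(R)+1$ from Lemma~\ref{numgen bound} this yields equality, and since minimal multiplicity gives $\mu(\mf m^n)=n\eh(R)+1$ as well, the hypothesis of Theorem~\ref{get normal} is verified, whence $\mf m$ is normal.

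For the final link I would close the cycle with $(a) \Rightarrow (c)$ and $(c) \Rightarrow (a)$. The forward direction $(a) \Rightarrow (c)$ is precisely the Corollary immediately following Lemma~\ref{Rees lemma} (setting $J = \mf m$ there gives $\mu(I) = \eh(I \mid \mf m) + 1$ for every integrally closed $I$). For $(c) \Rightarrow (a)$, I would again test on powers of $\mf m$: applying $(c)$ to $I = \overline{\mf m^n}$ and using $\eh(\overline{\mf m^n} \mid \mf m) = \eh(\mf m^n \mid \mf m)$ together with the mixed-multiplicity scaling $\eh(\mf m^n \mid \mf m) = n\,\eh(\mf m \mid \mf m) = n\,\eh(R)$ gives $\mu(\overline{\mf m^n}) = n\eh(R) + 1$, which is once more exactly the equality that triggers Lemma~\ref{numgen bound} and Theorem~\ref{get normal} to deliver both minimal multiplicity (from $n=1$) and normality of $\mf m$.

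The main obstacle I anticipate is the bookkeeping around the ``$-1$'' shift and the $n=1$ boundary case: one must verify that the specializations at $I=\mf m$ genuinely force the sharp equality $\mu(\mf m) = \eh(R)$ rather than a weaker bound, and that the mixed-multiplicity identity $\eh(I\mid \mf m)$ is being evaluated on the integral closure $\overline{\mf m^n}$ correctly (using $\eh(\overline{\mf m^n}\mid\mf m)=\eh(\mf m^n\mid\mf m)$, valid since mixed multiplicities depend only on integral closures). The deeper input --- normality of $\mf m$ from the numerical coincidence $\mu(\overline{\mf m^n}) = n\eh(R)+1$ --- is entirely outsourced to Theorem~\ref{get normal}, so the real work in this corollary is just arranging the specializations to match that theorem's hypotheses exactly.
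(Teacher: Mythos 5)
Your proposal is correct and rests on exactly the same ingredients as the paper, which closes the single cycle $(a)\Rightarrow(c)\Rightarrow(b)\Rightarrow(a)$ using the corollary to Lemma~\ref{Rees lemma}, Corollary~\ref{minimal multiplicity}, Proposition~\ref{go mfull}, Lemma~\ref{numgen bound}, and Theorem~\ref{get normal}; your rearrangement into $(a)\Leftrightarrow(b)$ plus $(a)\Leftrightarrow(c)$, with the direct $(c)\Rightarrow(a)$ obtained by testing on $\overline{\m^n}$ and using that mixed multiplicities depend only on integral closures, is an immaterial variation of the same argument. The boundary worry you flagged at $I=\m$ resolves cleanly once Abhyankar's inequality is stated correctly: for a two-dimensional Cohen--Macaulay ring it reads $\eh(R)\ge\mu(\m)-1$ (not $\mu(\m)\le\eh(R)$), so condition $(b)$ at $I=\m$ gives $\eh(R)\le\mu(\m)-1$, forcing the equality $\eh(R)=\mu(\m)-1$, which is precisely minimal multiplicity in dimension two --- there is no contradiction, and likewise $(c)$ at $I=\m$ gives $\mu(\m)=\eh(\m\mid\m)+1=\eh(R)+1$, the same condition.
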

\begin{proof}
Corollary~\ref{minimal multiplicity} together with Proposition~\ref{go mfull}
shows that $(a)$ implies $(c)$ and $(c)$ implies $(b)$, so we are left to show the last implication $(b)$ implies $(a)$.

First, observe that, for $I = \mf m$, the inequality implies that $R$ has minimal multiplicity. 
Consider $\overline{\mf m^n}$. Clearly, $\loewy (\overline{\mf m^n}) = n$ and $\eh(\overline{\mf m^n}) = n^2 \eh(R)$, 
so we obtain that
\[
n \eh(R) + 1 \leq \mu (\overline{\mf m^n}).
\]
By Lemma~\ref{numgen bound}, $\mu (\overline{\mf m^n}) = n \eh(R) + 1$ and the latter is also equal to $\mu(\mf m^n)$ 
because $R$ has minimal multiplicity. Thus the assertion follows from Theorem~\ref{get normal}.
\end{proof}

\begin{remark}
D'Cruz and Verma have proved (\cite[Theorem~2.2]{DCruzVerma}) that for every $\mf m$-primary ideal $I$
in a Cohen-Macaulay local ring of dimension $d$
\[
\mu(I) \leq \eh_{d-1} (\mf m \mid I) + d - 1.
\]
The ideals for which this inequality is actually an equality are said to have {\it minimal mixed multiplicity}.

Thus our condition $(c)$ says that every $\mf m$-full $\mf m$-primary ideal has minimal mixed multiplicity.
\end{remark}

\begin{example}
The assumption of normality of $\mf m$ is important. For example in $\mathbb C[[x,y,z]]/(x^2 + y^2)$ the ideal $I = (x, z^4)$ is integrally closed, but its multiplicity is $8$ while the Loewy length is $5$.
\end{example}

\begin{example}
Let $\mathcal C$ be the class of normal domains of dimension two satisfying the conditions of Corollary \ref{unramified criterion}. Then $\mathcal C$ contains all rational singularities, but a lot more. For example, let $K$ be an algebraically closed field and $R= K[[x,y,z]]/(x^2+g(y,z))$ with $g \in (y,z)^3 - (y,z)^4$, then $R \in \mathcal C$ but $R$ typically does not have rational singularity (see Example 4.3 of \cite{OWY2}). One concrete example is $R= K[[x,y,z]]/(x^2+y^3+z^6)$. 

When $R$ is an excellent normal local domain over an algebraically closed field, then $R \in \mathcal C$  precisely when $\mf m$ is a $p_g$-ideal in the sense of \cite{OWY1}.

\end{example}

\begin{example}
Minimal nonrational double point:  $R= \mathbb C[[x,y,z]]/(x^2 + y^4 + z^4)$. We have  $I= \overline {(x, z^2)} = (x, z^2, yz, z^2)$ has $4$ generators and Loewy length $2$ but multiplicity is $8$ (the reduction number is 2).
\end{example}

\begin{example}
Let $R = k[[x,y,z]]/(x^2 + y^5 + z^5)$ and $I = \overline {(x, z^3)} = (x, y^3, y^2z, yz^2, z^3)$. 
Thus $R$ is a complete local ring with minimal multiplicity and $I$ is an integrally closed $\mf m$-primary ideal.

Observe that 
\[
\eh(I) = \length \left (R/(x,z^3) \right) = \length \left (  k[[x,y,z]]/(x, z^3, y^5) \right) = 15.
\]
On the other hand, $\mu (I) = 5$ and $(x,y,z)^3 \subseteq I$, so
\[
\left (\mu(I) -1 \right ) \loewy(I) < \eh(I).
\]

\end{example}

\subsection{Dimension three}

\begin{definition}
We say that a three-dimensional local algebra $R$ over an algebraically closed field  of characteristic $0$ is  a compound Du Val singularity (cDV) 
if the completion of $R$ is isomorphic to a hypersurface
\[
k[[x,y,z,t]]/\left (f(x,y,z) + tg(x,y,z,t) \right)
\]
where $k[[x,y,z]]/(f(x,y,z))$ is a Du Val  surface (i.e., a rational double point) and $g$ is arbitrary.
\end{definition}

By definition, a cDV singularity has a Du Val singularity as a hyperplane section ($t = 0$).
However, a result of Miles Reid (\cite[Corollary~2.10]{Reid}) asserts that a {\it general}
hyperplane section has a Du Val singularity.

The following corollary is an evidence for Conjecture~\ref{maybe stupid}, since cDV singularities 
are canonical.

\begin{corollary}\label{cool evidence}
Inequality~\ref{ineq} holds if $R$ has a cDV singularity.
\end{corollary}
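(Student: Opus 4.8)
The plan is to cut down by a general hyperplane to a Du Val surface and then feed a sharp two-dimensional length--multiplicity estimate into the dimension-reduction inequality of Corollary~\ref{loewy general}. By Proposition~\ref{go mfull} I may assume $I$ is $\mf m$-full, and by Remark~\ref{pass to infinite residue field} that the residue field is infinite. Since $R$ has a cDV singularity, I would choose one sufficiently general $x\in\mf m$ that simultaneously (i) witnesses $\mf m$-fullness, $\mf m I:x=I$; (ii) is superficial for $\mf m$, so that $\dim R/(x)=2$ and $\eh(R/(x))=\eh(R)$; and (iii) by Reid's theorem (\cite{Reid}) makes $S:=R/(x)$ a Du Val surface. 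Then $S$ is an analytically unramified, Cohen--Macaulay, normal rational surface with $\eh(S)=\eh(R)=2$.

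Two ingredients are routine. First, the proof of Corollary~\ref{loewy general} already gives the purely dimension-theoretic bound $\eh(I)\le\loewy(I)\,\eh(IS)$, so the problem reduces to bounding the surface multiplicity $\eh(IS)$. Second, because $I$ is $\mf m$-full, Watanabe's formula (\cite[Theorem~2]{WatanabeJ}) gives $\mu(I)=\mu(IS)+\length(R/(I,x))$; as $IS$ is an $\mf m_S$-primary ideal in the two-dimensional ring $S$ it needs at least two generators, whence $\length(R/(I,x))\le\mu(I)-2$.

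The crux is a sharp estimate for $\eh(IS)$ that is better than what Lech's inequality provides. The key input, which is genuinely new and not contained in the earlier sections, is the identity
\[
\eh(\bar J)=\eh(S)\,\length_S(S/\bar J)=2\,\length_S(S/\bar J)
\]
for every integrally closed $\mf m_S$-primary ideal $\bar J$ of a Du Val surface $S$. This is the two-dimensional local Riemann--Roch / $p_g$-ideal formula of \cite{OWY1} specialized to a rational double point: there $p_g(S)=0$ and the canonical cycle is trivial, so the correction term $Z\cdot K$ vanishes and only the coefficient $\eh(S)=2$ survives. Applying it to the integral closure $\overline{IS}$ (integrally closed in $S$), and using $\eh(IS)=\eh(\overline{IS})$ together with $\length_S(S/\overline{IS})\le\length_S(S/IS)=\length(R/(I,x))$, I would get
\[
\eh(IS)\le\eh(S)\,\length(R/(I,x))\le 2\,(\mu(I)-2).
\]

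Combining this with the dimension-reduction bound yields
\[
\eh(I)\le\loewy(I)\,\eh(IS)\le 2\,\loewy(I)\,(\mu(I)-2),
\]
which is inequality~\ref{ineq} in dimension three, since $(d-1)!=2$. I expect the displayed length--multiplicity identity to be the main obstacle: Lech's inequality only controls $\eh(\bar J)$ by $(d-1)!\,\eh(S)\,\length_S(S/\bar J)$, a factor $(d-1)!=2$ too large (this is exactly the surplus by which the general estimate of Theorem~\ref{general mixed} overshoots), and removing this factorial is precisely where the rationality of the general hyperplane section---the defining feature of a cDV singularity---must enter. A minor point to confirm is that a single general $x$ can be chosen to satisfy (i)--(iii) at once, and that $IS$ need not itself be integrally closed, which is why one passes to $\overline{IS}$: this preserves the multiplicity and only lowers the colength.
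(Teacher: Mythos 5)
Your architecture is exactly the paper's: reduce to $\mf m$-full ideals via Proposition~\ref{go mfull}, choose a single general $x$ that simultaneously witnesses $\mf m$-fullness, is superficial, and (by Reid's theorem) cuts out a Du Val section $S = R/(x)$; use Watanabe's formula $\mu(I) = \mu(IS) + \length(S/IS)$ together with $\mu(IS) \geq 2$ to get $\length(S/IS) \leq \mu(I) - 2$; use the $x^{\loewy(I)} \in I$ argument of Corollary~\ref{loewy general} and Lemma~\ref{filtration} to get $\eh(I) \leq \loewy(I)\,\eh(IS)$; and finish with a ``Lech constant one'' bound on the Du Val surface. The only divergence is the source of that last bound. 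The paper quotes Goto--Iai--Watanabe \cite[Proposition~7.5]{GotoIaiWatanabe}, which gives $\eh(J) \leq 2\length(S/J)$ for \emph{every} $\mf m_S$-primary ideal $J$ of a Du Val surface directly, and then simply invokes Remark~\ref{rmLech}; in particular your detour through the integral closure $\overline{IS}$ is unnecessary in the paper's route.

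There is, however, a genuine error in your key new ingredient: the claimed identity $\eh(\bar J) = 2\length(S/\bar J)$ is \emph{false} for general integrally closed ideals of a Du Val surface. On a resolution $X$ with $\bar J\sO_X = \sO_X(-Z)$ one has $\length(S/\bar J) = -Z(Z + K_X)/2$ and $\eh(\bar J) = -Z^2$, so $2\length(S/\bar J) - \eh(\bar J) = -Z K_X$; the canonical divisor is numerically trivial on the exceptional curves only of the \emph{minimal} resolution, whereas an ideal whose cycle requires further blow-ups picks up positive discrepancies $K_X = \sum_j a_j F_j$ with $a_j > 0$ and $Z F_j < 0$ at genuine base points. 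Concretely, in $S = k[[x,y,z]]/(xy - z^2)$ the ideal $J = (x, z, y^2)$ is integrally closed with $\length(S/J) = 2$ but $\eh(J) = 3$: a general pair $a, b \in J$ expresses $x$ and $z$ as unit multiples of $y^2$, and the relation $xy = z^2$ then leaves $S/(a,b) \cong k[[y]]/(y^3)$, so a minimal reduction has colength $3$; since the only strictly larger ideal of colength at most $2$ is $\mf m$, which has multiplicity $2 \neq 3$, the ideal $J$ is indeed integrally closed (on the resolution side $Z = E' + 2F$ with $-Z^2 = 3$ and $ZK_X = -1$). Fortunately, your chain only uses the inequality $\eh(\overline{IS}) \leq 2\length(S/\overline{IS})$, and your Riemann--Roch computation does prove it once corrected: $Z K_X = \sum_j a_j (Z F_j) \leq 0$ because $Z$ is anti-nef and Du Val singularities are canonical ($a_j \geq 0$). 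So after weakening your ``identity'' to this inequality --- which is precisely the integrally closed case of the Goto--Iai--Watanabe result the paper cites --- your proof is correct, and the remaining steps (passing to $\overline{IS}$ preserves multiplicity and only decreases colength) are fine as you stated them.
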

\begin{proof}
The key ingredient is a result Goto, Iai, and Watanabe in \cite[Proposition~7.5]{GotoIaiWatanabe}
which asserts that every $\mf m$-primary ideal $I$ in a Du Val singularity satisfies the inequality 
$e(I) \leq 2\length (R/I)$ (as if it was a regular local ring). 

Thus we may just follow the proof of Corollary~\ref{loewy general}, see Remark \ref{rmLech}, and use that
for a general linear form $f \in \mf m$ the quotient $R/(f)$ is isomorphic to the rational double point.
\end{proof}

\section{Alternative proofs and related inequalities in dimension two}\label{alternative}

In this section we collect and discuss a few related inequalities in dimension two. For instance, one can give sharper and easier proofs when $R$ is regular. For rational singularites one can give an alternative proof using fiber cones. The relation to the work of in \cite{OWY1, OWY2} will be clarified.  Recall that the order of $I$, $\ord(I)$, is the largest  integer $s$ such that $I\subseteq \m^s$. 

\begin{theorem}\label{regular in dim 2}
Let $(R, \mf m)$ be a two-dimensional regular local ring, then for a full,  $\mf m$-primary ideal $I$
\[
\eh(I) \leq \loewy(I) (\mu(I) - 1) = \loewy(I) \ord (I).
\]
\end{theorem}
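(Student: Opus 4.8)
The plan is to separate the asserted equality $\mu(I)-1=\ord(I)$ from the inequality $\eh(I)\le \loewy(I)\ord(I)$, and to prove the latter for \emph{all} $\m$-primary ideals, fullness being needed only for the equality. After passing to an infinite residue field (Remark~\ref{pass to infinite residue field}), the engine for both parts is reduction modulo a general element $x\in\m$: since $R$ is a two-dimensional regular local ring, such an $x$ is part of a regular system of parameters, so $R/(x)$ is a discrete valuation ring and $IR/(x)$ is a nonzero, hence principal, ideal.

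First I would compute the mixed multiplicity $\eh(I\mid\m)$. By Teissier's formula $\eh(I\mid\m)=\length(R/(a,b))$ for general $a\in I$ and $b\in\m$; working in the DVR $R/(b)$, this length equals the valuation of the image of $a$, which for general $a$ is the order of $IR/(b)$. A short check shows that for general $b$ this order is exactly $\ord(I)$: an element of $I$ of minimal order keeps its order modulo a general linear form, while $I\subseteq\m^{\ord(I)}$ forces the reverse inequality. Hence $\eh(I\mid\m)=\ord(I)$. The inequality then follows exactly as in the proof of Corollary~\ref{minimal multiplicity}: from $\m^{\loewy(I)}\subseteq I$ together with the monotonicity and homogeneity of mixed multiplicities,
\[
\eh(I)=\eh(I\mid I)\le \eh(I\mid \m^{\loewy(I)})=\loewy(I)\,\eh(I\mid\m)=\loewy(I)\ord(I),
\]
and I emphasize that this step uses nothing beyond $I$ being $\m$-primary.

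It remains to prove $\mu(I)=\ord(I)+1$ for a full ideal $I$, which simultaneously identifies the two right-hand sides. Here I would invoke Watanabe's generator formula \cite[Theorem~2]{WatanabeJ}: for a general $x$ witnessing fullness, $\mu(I)=\mu(IR/(x))+\length(R/(I,x))$. Because $R/(x)$ is a DVR, $IR/(x)$ is principal and $\mu(IR/(x))=1$; and the same DVR computation as above gives $\length(R/(I,x))=\ord(I)$, since the image of $I$ in $R/(x)$ is the maximal ideal raised to the power $\ord(I)$. Hence $\mu(I)=\ord(I)+1$, as claimed.

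The main obstacle I anticipate is the bookkeeping around the hypothesis \emph{full}: the generator formula and the fullness condition $I:x=I:\m$ must hold for the \emph{same} general $x$ that also realizes the DVR order computations, so I must verify that a single sufficiently general $x\in\m$ simultaneously (i) witnesses fullness, (ii) is a regular parameter, and (iii) has leading form not dividing the leading form of a minimal-order element of $I$. Each condition excludes only a proper closed (or finite) subset of choices, so a general $x$ satisfies all three; but one should make explicit that the formula of \cite{WatanabeJ} genuinely requires only fullness, and not the stronger $\m$-fullness used in Theorem~\ref{general mixed}.
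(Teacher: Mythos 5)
Your proof of the inequality $\eh(I)\le\loewy(I)\ord(I)$ is correct and takes a genuinely different route from the paper's. The paper filters $I^n/\m^{k}I^n$ (where $k=\loewy(I)$), counts generators of the ideals $\m^{c}I^{n}$ using the Lipman--Zariski theorem that products of full ideals are full together with $\mu=\ord+1$ for full ideals, and lets $n\to\infty$; you instead run the mixed-multiplicity argument of Corollary~\ref{minimal multiplicity}, with the new ingredient $\eh(I\mid\m)=\ord(I)$, verified by a leading-form computation in the DVR $R/(b)$. That computation is sound (your genericity conditions on $(a,b)$ are each open, hence simultaneously satisfiable), and your route buys something the paper's statement of Theorem~\ref{regular in dim 2} does not contain: the inequality for \emph{every} $\m$-primary ideal, with no fullness hypothesis. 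The paper only extracts that afterwards, in the final remark of Section~\ref{alternative}, by passing from $I$ to $\overline{I}$.

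The equality part, however, has a genuine gap: Watanabe's generator formula does \emph{not} hold under fullness alone, contrary to your closing claim. For $x$ a nonzerodivisor and $I$ an $\m$-primary ideal one always has
\[
\mu(I)-\mu(IR/(x))=\length\bigl((I:x)/(\m I:x)\bigr),
\qquad
\length(R/(I,x))=\length\bigl((I:x)/I\bigr),
\]
the second because kernel and cokernel of multiplication by $x$ on the finite-length module $R/I$ have equal length. Since $I\subseteq \m I:x\subseteq I:x$ always holds, the formula $\mu(I)=\mu(IR/(x))+\length(R/(I,x))$ is valid for a given $x$ \emph{exactly} when $\m I:x=I$, i.e.\ when $I$ is $\m$-full with witness $x$; the weaker condition $I:x=I:\m$ only yields $\mu(I)\le \ord(I)+1$, which is true for all $\m$-primary ideals anyway (by Hilbert--Burch, $I$ is generated by the maximal minors of an $n\times(n-1)$ matrix with entries in $\m$, so $\ord(I)\ge \mu(I)-1$). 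In a two-dimensional regular local ring a full ideal is indeed $\m$-full, but that implication is precisely part of the classical equivalence (full $\Leftrightarrow$ $\m$-full $\Leftrightarrow$ $\mu(I)=\ord(I)+1$, \cite[Chapter~14]{HS}) that the paper's proof simply cites --- and you neither cite nor prove it. The repair is easy: either state your equality for $\m$-full ideals, in which case your Watanabe-plus-DVR computation is a clean self-contained derivation of $\mu(I)=\ord(I)+1$, or invoke the equivalence from \cite{HS}; but in the latter case the equality is already contained in the citation and the detour through \cite{WatanabeJ} becomes redundant.
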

\begin{proof}
It is classical, see for instance \cite[Chapter 14]{HS} that $\mu (I) = \ord(I) +1$ if and only if $I$ is $\mf m$-full if and only if $I$ is full. 

Let $k = \loewy(I)$, then $\mf m^k \subseteq I$, then $\mf m^kI^n \subseteq I^{n+1}$, so 
\[
\length (I^n/I^{n+1}) \leq \length(I^n/\mf m^kI^n) = 
\mu (I^n) + \mu (\mf mI^n) + \cdots + \mu (\mf m^{k-1}I^n).
\]
It was shown by Lipman-Zariski (\cite{Lipman}) that the product of full ideals in still full, so 
$\mf m^cI^n$ is full and, thus, 
\[
\mu (\mf m^cI^n) = \ord (\mf m^cI^n) + 1 = n\ord(I) + c + 1
\]
since $\ord$ is a valuation. 
Therefore, 
\[
\length (I^n/I^{n+1}) \leq 
nk \ord(I) + k(k+1)/2
\]
Therefore, as $n$ approaches $\infty$, we obtain that $\eh(I) \leq k (\mu (I) -1)$, which proves the assertion.
\end{proof}

\begin{example}
Using the version above, one can easily see many cases in dimension two when equality occurs. For example, let $R=k[[x,y]]$. Then equality occurs when $I = \overline{(x^a,y^b)}$ or when $I = \overline{(x^a,y^a, x^by^c)}$ with $b+c\leq a$. 

\end{example}

\begin{remark}
The inequality $\eh(I) \leq  \loewy(I) \ord (I)$ does not hold for non-regular local rings.  In the (E8) singularity $R=k[[x,y,z]]/(x^2 + y^3 + z^5)$. Take $I = (x, y^2, yz^2, z^4) = \overline {(x, y^2)}$.
One can check that $\eh(I) = \length (k[[x,y,z]]/(x, y^2, z^5)) = 10$. On the other hand, $\loewy (I) = 4$ and $\ord I = 1$. 
\end{remark}

\begin{remark}
Still assume that $R$ is regular of dimension $2$. We have that $I \subseteq \mf m^{\ord(I)}$, so 
\[
\length (I^n/I^{n+1}) \geq \length (I^n/\mf m^{\ord(I)}I^n) = \mu(I^n) + \mu(\mf mI^n) + \cdots + \mu (\mf m^{\ord (I) - 1} I^n).
\]
Since $\mf m^iI^n \subseteq I^n$ and is $\mf m$-full, 
one can show as above that $\eh(I) \geq \ord (I) (\mu(I) - 1) = \ord(I)^2$.
\end{remark}

Using fiber cones we can give another proof of  our inequality for rational singularities.

\begin{corollary}\label{ratsing}
Let $(R, \mf m)$ be a two-dimensional local rational singularity. Then for any integrally closed $\mf m$-primary ideal $I$
\[
\eh(I) \leq \loewy(I) (\mu(I) - 1).
\]
\end{corollary}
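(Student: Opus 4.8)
The plan is to reduce the rational singularity case to a statement about the fiber cone of $I$, since the corollary is announced under the heading ``Using fiber cones.'' For a two-dimensional rational singularity, a key structural fact (due to Lipman and developed in the work of Okuma--Watanabe--Yoshida) is that every integrally closed $\mf m$-primary ideal $I$ has reduction number one, so that $I^2 = (a,b)I$ for a general minimal reduction $(a,b)$. This makes the associated fiber cone $\fiber(I) = \bigoplus_n I^n/\mf m I^n$ a particularly well-behaved graded ring whose Hilbert function stabilizes quickly. First I would record that the multiplicity $\eh(I)$ equals the leading coefficient data of $\length(R/I^{n+1})$, and that $\mu(I^n) = \length(I^n/\mf m I^n)$ is governed by the fiber cone.

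Second, I would exploit the Loewy length bound $\mf m^{\loewy(I)} \subseteq I$ exactly as in the one-dimensional Theorem~\ref{dimension one} and in Theorem~\ref{regular in dim 2}: writing $k = \loewy(I)$, we have $\mf m^k I^n \subseteq I^{n+1}$, whence
\[
\length(I^n/I^{n+1}) \leq \mu(I^n) + \mu(\mf m I^n) + \cdots + \mu(\mf m^{k-1} I^n).
\]
The main point is then to control $\mu(\mf m^c I^n)$. In the rational singularity setting, products of integrally closed ideals are integrally closed (a theorem of Lipman), so each $\mf m^c I^n$ is integrally closed, hence $\mf m$-full, and one can bound its number of generators through the mixed multiplicity $\eh(\mf m \mid \mf m^c I^n)$ using the D'Cruz--Verma relation $\mu(J) \leq \eh(\mf m \mid J) + 1$ together with the additivity $\eh(\mf m \mid I^n) = n\,\eh(\mf m \mid I)$.

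Third, I would assemble these estimates. Summing the bound on $\mu(\mf m^c I^n)$ over $c = 0, \dots, k-1$ and letting $n \to \infty$, the linear-in-$n$ terms dominate and produce $\eh(I) \leq \loewy(I)\,\eh(\mf m \mid I)$; one then identifies $\eh(\mf m \mid I) = \mu(I) - 1$ via the minimal mixed multiplicity identity available for integrally closed ideals in a rational singularity (this is the content of the corollary to Lemma~\ref{Rees lemma}, valid here because $\mf m$ is normal with reduction number one in a rational singularity). This gives precisely $\eh(I) \leq \loewy(I)(\mu(I)-1)$.

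The hard part will be justifying that the fiber-cone machinery applies without the analytic unramifiedness and minimal-multiplicity hypotheses stated in Corollary~\ref{minimal multiplicity}; a two-dimensional rational singularity automatically has these good properties (it is normal, analytically unramified, Cohen--Macaulay, and $\mf m$ is normal with minimal multiplicity), but I would need to cite this cleanly, most likely through Lipman's results on rational singularities and the reduction-number-one property, rather than re-deriving it. The alternative, and perhaps cleaner, route is to invoke the fiber cone directly: show that for a rational singularity the fiber cone of every integrally closed ideal is Cohen--Macaulay with $h$-vector concentrated in low degrees, so that $\eh(\fiber(I))$ is exactly $\mu(I)-1$, and then combine with the Loewy filtration to conclude. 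Deciding which of these two packagings is shortest, and pinning down the precise citation for ``products of integrally closed ideals are integrally closed'' and ``reduction number one'' in this setting, is where the real care is needed.
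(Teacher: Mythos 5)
Your proposal is correct, but your primary route is not the one the paper uses for this corollary. The paper's proof is exactly your ``alternative packaging'': Lipman's theorem that integrally closed ideals on a rational surface are normal reduces the statement to the normal-fiber-cone bound $\eh(I) \leq \eh(\fiber(I))\loewy(I)$ of Theorem~\ref{fiber bound}, and then Lipman--Teissier's reduction number one plus Shah's result identify $\fiber(I)$ as Cohen--Macaulay with minimal multiplicity $\mu(I)-1$; no mixed multiplicities appear. Your main route --- Loewy filtration, the D'Cruz--Verma bound $\mu(J) \leq \eh(\mf m \mid J)+1$, additivity of mixed multiplicities, and the minimal mixed multiplicity equality $\eh(\mf m \mid I) = \mu(I)-1$ --- is instead essentially the paper's Section~4 argument (Lemma~\ref{Rees lemma} and Corollaries thereafter), applied after checking that a rational surface singularity satisfies its hypotheses (Artin: minimal multiplicity; Lipman: $\mf m$ normal; normality and excellence give Cohen--Macaulayness and analytic unramifiedness). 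Indeed, the corollary in question is expressly billed as ``another proof,'' so your route duplicates the first proof rather than the one being compared. Two streamlining remarks on your version: the D'Cruz--Verma inequality holds for \emph{all} $\mf m$-primary ideals in a Cohen--Macaulay ring, so you do not need Lipman's product theorem to handle the intermediate ideals $\mf m^c I^n$ (you need it only to know $\mf m$ is normal for the final identification); and the whole filtration-plus-additivity bookkeeping can be replaced by the one-line monotonicity used in Corollary~\ref{minimal multiplicity}, namely $\eh(I) = \eh(I \mid I) \leq \eh(I \mid \mf m^{\loewy(I)}) = \loewy(I)\,\eh(I \mid \mf m)$. What each approach buys: yours stays within Rees-style mixed multiplicity theory and makes the role of minimal mixed multiplicity transparent, while the paper's fiber-cone proof is shorter given the machinery of Section~\ref{fiber} and generalizes more readily (it isolates normality of $I$ and reduction number one as the only inputs, which is what powers the relative version in Section~\ref{threshold}).
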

\begin{proof}
It was shown by Lipman that an integrally closed ideal in a rational surface is normal. Hence we may use Theorem~\ref{fiber bound} and obtain that
\[
\eh (I) \leq \eh (\fiber (I)) \loewy(I).
\]
By Lipman-Teissier (\cite[Corollary~5.4]{LipmanTeissier}) any integrally closed ideal has reduction number one. 
Therefore by a result of Shah (\cite[Corollary~7(a)]{Shah}) 
it is known that the fiber cone is Cohen-Macaulay with minimal multiplicity $\mu(I) - 1$
and the claim follows. 
\end{proof}

\begin{remark}\label{OWY methods}
Corollary \ref{ratsing} can be obtained using the geometric techniques of Okuma, Watanabe, and Yoshida (\cite{OWY1, OWY2}). 
If $R$ is normal, we can find a resolution of singularities $X \to \Spec R$ such that 
the ideals $I\sO_X$ and $\mf m\sO_X$ are invertible. In this case, there exist anti-nef cycles $Z, M$ 
supported on the exceptional divisor of $X$
such that $I\sO_X = \sO_X (-Z)$ and $\mf m\sO_X = \sO_X (-M)$.
Under this assumptions we can compute multiplicity as a self-intersection number, $\eh(I) = - Z^2$.

In the notation of \cite{OWY1}, the statement asserts that our inequality holds 
for all integrally closed $\mf m$-primary ideals $I$ if $\mf m$ is a $p_g$-ideal.
In this case, as remarked in \cite[Example~6.2]{OWY1},  $\mu (I) = -MZ + 1$ where $MZ$ is the intersection number.

By the definition of $\loewy (I)$,  $\loewy (I) M - Z$ is effective, so 
we have $Z (Z - \loewy (I) M) \geq 0$ because $Z$ is anti-nef. 
Thus,  
\[
\loewy(I)(\mu(I) - 1) = -\loewy(I)MZ \geq -Z^2 = \eh (I).
\]
\end{remark}

\begin{remark}
Lemma \ref{numgen bound} can also be obtained using the techniques of \cite{OWY1}. 

In \cite[Theorem~6.1]{OWY1} it was shown that 
\[
\length (I/\overline{\mf mI}) = -MZ + 1 - \length \left( \frac{\overline{\mf mI}}{a\mf m + xI}\right)
\]
where $a \in I, x \in \mf m$ are general elements and $MZ$ is the intersection number.
Of course, $\mu (I) = \length (I/\overline{\mf m I}) + \length (\overline{\mf mI}/\mf mI)$, so 
\[
\mu(I) = -MZ + 1 - \length \left( \frac{\mf m I}{a\mf m + xI}\right) \leq - MZ + 1.
\]
Therefore, if $I = \overline{\mf m^n}$ and, thus, $Z = nM$, we obtain that 
\[
\mu(I) \leq -nM^2 + 1 = n\eh(\mf m) + 1.
\]
\end{remark}

\begin{remark}
Next, we discuss another related inequality. If $I$ is an $\mf m$-full ideal then we know that $\mu (I) \geq \mu(J)$ for any $I \subseteq J$. 
Observe that $I \subseteq \mf m^{\ord I}$, so $\mu(I) \geq \mu (\mf m^{\ord I})$.

If $R$ has minimal multiplicity we know that 
\[
\mu (\mf m^{\ord I}) = \eh(R) \binom{\ord(I) + d - 2}{d - 1} + \binom{\ord(I) + d - 2}{d - 2}.
\]

In dimension $2$ the formula above tells us that $\mu (\mf m ^{\ord I}) = \eh(R)\ord(I) + 1$, 
so  $$\mu(I) \geq \eh(R) \ord (I) + 1,$$ in other words $(\mu (I) - 1) \loewy(I) \geq \eh(R) \ord(I) \loewy(I)$.
Therefore, one can study the inequality:
\[\eh(R) \ord(I) \loewy(I) \geq \eh(I).\]

In general it could be a finer inequality then inequality \ref{ineq}. Note that this holds for all ideals in a regular local ring of dimension $2$ by Theorem \ref{regular in dim 2}.
To show this just observe that $\eh(I) = \eh(\bar{I})$, but $\loewy (\bar{I}) \leq \loewy(I)$ and $\ord(\bar{I}) \leq \ord(I)$ because $I \subseteq \bar{I}$.
\end{remark}

\section{A conjectured inequality on $F$-thresholds}\label{threshold}
In this section we discuss a conjecture posed by Huneke, Musta\c{t}\v{a}, Takagi, and Watanabe (\cite[Conjecture~5.1]{HMTW}).  We shall show that a special case of this conjecture in dimension two follows from inequality \ref{ineq}.  This result was suggested by a conversation with Mircea Musta\c{t}\v{a} after a talk given by the first author at the University of Michigan.

Before we state the conjecture, let us recall that the F-threshold of an ideal $\mf a$ with respect to a parameter ideal
$J = (x_1, \ldots, x_d)$ is defined as the limit (the fact that the limit exists was settled fairly recently in \cite{DNP}):
\[
\thj^{J} (\mf a) = \lim_{e \to \infty} \frac{\min\{ N \mid \mf a^N \subseteq (x_1^{p^e}, \ldots, x_d^{p^e})\}}{p^e}.
\]

\begin{conjecture}\label{HMTW}
Let $(R, \mf m)$ be a $d$-dimensional Noetherian local ring of characteristic $p > 0$. If $J \subseteq \mf m$
is an ideal generated by system of parameters and $\mf a$ is an $\mf m$-primary ideal, then 
\[
\eh(J) \leq \left(\frac{\thj^J (\mf a)}{d} \right)^d \eh(\mf a).
\]
\end{conjecture}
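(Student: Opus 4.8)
The plan is to convert the defining containment of the $F$-threshold into a comparison of colengths and then pass to the limit. Passing to the completion, we may assume $R$ is complete. Write $c := \thj^J(\mf a)$ and, for each $e$, set
\[
a_e := \min\{N : \mf a^N \subseteq J^{[p^e]}\},
\]
so that $a_e/p^e \to c$ by the definition of the $F$-threshold. The containment $\mf a^{a_e}\subseteq J^{[p^e]}$ induces a surjection $R/\mf a^{a_e}\twoheadrightarrow R/J^{[p^e]}$, hence $\length(R/\mf a^{a_e}) \geq \length(R/J^{[p^e]})$; this comparison is the engine of the whole argument.

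First I would extract the elementary bound. Since $J^{[p^e]} = (x_1^{p^e},\dots,x_d^{p^e})$ is generated by a system of parameters, one has $\length(R/J^{[p^e]}) \geq \eh(J^{[p^e]}) = p^{ed}\eh(J)$, while the Hilbert--Samuel expansion gives $\length(R/\mf a^{a_e}) = \tfrac{\eh(\mf a)}{d!}\,a_e^{\,d} + O(a_e^{\,d-1})$. Combining the two estimates, dividing by $p^{ed}$, and letting $e\to\infty$ (using $a_e/p^e\to c$) yields the weaker inequality
\[
\eh(J) \leq \frac{c^{\,d}}{d!}\,\eh(\mf a).
\]
This inequality holds for an arbitrary Noetherian local ring, but it falls short of the conjecture: since $d^{d}\geq d!$, the conjectured bound $\tfrac{c^{d}}{d^{d}}\eh(\mf a)$ is the smaller one, so the entire content of Conjecture~\ref{HMTW} lies in improving the factor $d!$ to $d^{d}$.

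The hard part, and where I expect the genuine obstacle, is exactly this refinement. The passage from $d!$ to $d^{d}$ is a convexity phenomenon: one should recast the statement as $c \geq d\,\bigl(\eh(J)/\eh(\mf a)\bigr)^{1/d}$ and exploit the \emph{product} structure of $J^{[p^e]}=(x_1^{p^e},\dots,x_d^{p^e})$, distributing the single Frobenius exponent $p^e$ among the $d$ coordinate directions and applying an AM--GM (equivalently, a mixed-multiplicity, or Newton-polyhedron volume) estimate, so that the $d$ factors of $p^e$ contribute $d^{d}$ in place of the simplex volume $d!$. In the regular case this convexity can be made precise using the flatness of Frobenius, which upgrades the crude bound $\length(R/J^{[p^e]}) \geq p^{ed}\eh(J)$ to an \emph{equality} and turns the problem into a volume computation; this is the setting in which Huneke, Musta\c{t}\v{a}, Takagi, and Watanabe verify the conjecture, and it contains as a very special case the two-dimensional regular instance deduced here from Inequality~\ref{ineq}.

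For a \emph{general} Noetherian local ring, however, Frobenius is not flat, the defect $\length(R/J^{[p^e]}) - p^{ed}\eh(J)$ is an uncontrolled Hilbert--Kunz-type correction that the elementary argument throws away, and the per-direction ``reach'' estimate underlying the AM--GM step has no evident analogue. Supplying this control and establishing the convexity without a regularity (or $F$-regularity) hypothesis is the crux, and is the reason I expect the full statement to remain out of reach of the colength method alone.
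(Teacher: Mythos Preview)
The statement you are attempting is labelled a \emph{Conjecture} in the paper, and the paper does \emph{not} prove it. Indeed, the authors emphasize that even the very special case where $R$ is regular and $\mf a = \mf m$ (Conjecture~\ref{HTW}) is still open; the only case settled in the paper is the two-dimensional regular instance with $\mf a = \mf m$, via Inequality~\ref{ineq} and Proposition~\ref{crll}. So there is no ``paper's own proof'' to compare against, and your final paragraph, where you conclude that the full statement is out of reach of the colength method, is exactly the state of affairs.

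Your derivation of the weaker bound $\eh(J) \leq \dfrac{c^{d}}{d!}\,\eh(\mf a)$ is correct and well known: the chain $\length(R/\mf a^{a_e}) \geq \length(R/J^{[p^e]}) \geq \eh(J^{[p^e]}) = p^{ed}\eh(J)$ together with the Hilbert--Samuel asymptotics gives it immediately. Your diagnosis of the gap---that passing from $d!$ to $d^{d}$ is a convexity/AM--GM phenomenon unavailable without extra structure on Frobenius---is also the standard intuition. One small inaccuracy: you write that Huneke, Musta\c{t}\v{a}, Takagi, and Watanabe ``verify the conjecture'' in the regular setting, but the paper explicitly records the regular case (even with $\mf a = \mf m$) as open in all dimensions $d \geq 3$; be careful not to overstate what is known.

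In short: you have not proved the conjecture, but neither has the paper, and you have correctly identified both the easy part and the genuine obstruction.
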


The characteristic-free version of this conjecture has appeared in \cite[Conjecture~2.6]{HTW}. When $R$ is regular and $\mf a=\m$, a special case of the conjecture, which is still open, would read: 

\begin{conjecture}\label{HTW}
Let $(R, \mf m)$ be a $d$-dimensional  regular local ring. If $J \subseteq \mf m$
is an ideal generated by system of parameters  then 
\[
\eh(J) \leq \left(\frac{\thj^J (\mf m)}{d} \right)^d= \left(\frac{d + \loewy (J) - 1}{d} \right)^d.
\]

\end{conjecture}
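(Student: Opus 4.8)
The plan is to recast Conjecture~\ref{HTW} as a single colength inequality and then to attack that inequality through the arithmetic--geometric mean (AM--GM) mechanism that its equality cases reveal. Since $R$ is regular, hence Cohen--Macaulay, and $J$ is a parameter ideal, we have $\eh(J)=\length(R/J)$. Treating the stated identity $\thj^J(\m)=d+\loewy(J)-1$ as given, Conjecture~\ref{HTW} is equivalent to
\[
\length(R/J)\leq\left(\frac{\loewy(J)+d-1}{d}\right)^d.
\]
First I would record why the $F$-threshold identity holds in the regular case: with $\ell=\loewy(J)$, the Frobenius-power inclusion $(\m^{\ell})^{[q]}\subseteq J^{[q]}$ and a direct count on the monomial ideal $(\m^{\ell})^{[q]}$ give $\loewy(J^{[q]})\leq q(\ell+d-1)-(d-1)$, while a matching lower bound (using an element of $\m^{\ell-1}\setminus J$ and flatness of Frobenius, $(I:z)^{[q]}=I^{[q]}:z^{[q]}$) forces $\thj^J(\m)=\lim_q\loewy(J^{[q]})/q=\ell+d-1$. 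With the identity in hand, the entire content is the displayed colength bound.

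The displayed bound is sharp precisely on monomial complete intersections, where it is literally AM--GM. For $J=(x_1^{a_1},\dots,x_d^{a_d})$ one computes $\length(R/J)=a_1\cdots a_d$ and $\loewy(J)+d-1=a_1+\cdots+a_d$, so the inequality reads $\prod a_i\leq\big(\tfrac1d\sum a_i\big)^d$, with equality exactly when all $a_i$ agree (that is, $J=\m^{[s]}$). This dictates the strategy: to an arbitrary parameter ideal $J$ I would attach positive ``virtual exponents'' $c_1,\dots,c_d$ satisfying
\[
\eh(J)\leq c_1\cdots c_d\qquad\text{and}\qquad c_1+\cdots+c_d\leq\loewy(J)+d-1,
\]
and then conclude by AM--GM. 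Two natural routes to produce the $c_i$ are a flat (Gr\"obner) degeneration of $J$ to a monomial ideal -- passing to an initial ideal preserves $\length(R/J)$ and can only decrease $\loewy$ (since $\m^{\loewy(J)}\subseteq J$ forces $\m^{\loewy(J)}\subseteq\operatorname{in}(J)$), so the bound for $J$ would follow from the same bound for $\operatorname{in}(J)$ -- and an induction on $d$ that cuts by a general element $x\in\m$ and peels off one factor $c_d$, using the sharp results in dimensions one and two (Theorem~\ref{regular in dim 2}) as the base cases.

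The hard part, and the reason the general case is open, is that neither route respects the complete-intersection structure on which the bound depends. The colength inequality is simply false for arbitrary monomial $\m$-primary ideals: for $J=\m^{c}$ one has $\length(R/\m^{c})=\binom{c+d-1}{d}$ and $\loewy(\m^{c})=c$, yet $\binom{c+d-1}{d}>\big(\tfrac{c+d-1}{d}\big)^d$ once $d\geq2$. Thus the degeneration route demands the bound only for those monomial ideals that arise as initial ideals of complete intersections, a class with no clean combinatorial description; and slicing by a general element does not interact multiplicatively with $\length(R/J)$, so a naive induction loses the sharp constant $d^{-d}$. In short, the crux is to manufacture virtual exponents meeting the product bound $\eh(J)\leq\prod c_i$ and the sum bound $\sum c_i\leq\loewy(J)+d-1$ at the same time; the inequalities proved in this paper deliver them when $d\leq2$, and it is exactly the passage to higher dimension that remains out of reach.
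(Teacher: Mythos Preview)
The statement you were asked to prove is labeled \emph{Conjecture} in the paper, and for good reason: the paper does not prove it. The authors say explicitly that this ``special case of the conjecture \ldots is still open,'' and they establish only the two-dimensional instance in the unnumbered Theorem following Proposition~\ref{crll}. So there is no ``paper's own proof'' to compare against in dimension $d\geq 3$.

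Your proposal is candid about this: you lay out two plausible strategies (Gr\"obner degeneration to a monomial ideal, and induction on $d$ via a general hyperplane section) and then correctly explain why each one breaks. In particular, your observation that the colength bound $\length(R/J)\leq\bigl(\tfrac{\loewy(J)+d-1}{d}\bigr)^d$ fails for arbitrary $\mf m$-primary ideals (e.g.\ $J=\mf m^c$), so that a degeneration argument cannot proceed without controlling which monomial ideals arise as initial ideals of complete intersections, is exactly the obstruction. What you have written is therefore not a proof but an informed discussion of why the problem is hard; as a proof attempt it has a genuine gap, namely the entire case $d\geq 3$.

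For $d=2$ your AM--GM framework does match the paper's argument. The paper produces the ``virtual exponents'' $c_1=\ord(J)$ and $c_2=\thj^J(\mf m)-\ord(J)$: Proposition~\ref{crll} gives $\loewy(\overline J)\leq c_2$, Theorem~\ref{regular in dim 2} gives $\eh(J)=\eh(\overline J)\leq \loewy(\overline J)\,\ord(J)\leq c_1c_2$, and then $c_1c_2\leq\bigl(\tfrac{c_1+c_2}{2}\bigr)^2=\bigl(\tfrac{\thj^J(\mf m)}{2}\bigr)^2$ is AM--GM. So in the one dimension where the paper actually has a proof, your outline and the paper's method coincide.
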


We will show that a special case of the conjecture follows from Inequality~\ref{ineq}.
The proof is adapted from \cite[Theorem~3.3]{HMTW}. Note that the proof of that result contains a mistake, as explained in  in the footnote on the page 132 of 
\cite{HTW}.

\begin{proposition}\label{crll}
Let $(R, \m)$ be regular local ring of dimension $d$ and characteristic $p > 0$. If $J \subseteq \mf m$ is an ideal generated by system of parameters, then
$$ \loewy(\overline{J}) \leq \bceil{\frac{\thj^J(\m) -\ord(J)}{d-1}}$$

\end{proposition}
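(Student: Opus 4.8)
The plan is to reduce the stated ceiling estimate to a single strict lower bound on the threshold. Assume $d\ge 2$ (so that dividing by $d-1$ makes sense) and write $L=\loewy(\overline J)$. It suffices to prove
\[
\thj^J(\m) > (d-1)(L-1)+\ord(J).
\]
Indeed, this is the assertion $L-1 < \bigl(\thj^J(\m)-\ord(J)\bigr)/(d-1)$; since $L-1$ is an integer lying strictly below the right-hand side, every integer that is $\ge$ that right-hand side is $\ge L$, so its ceiling is $\ge L$, which is precisely $\loewy(\overline J)\le \bceil{(\thj^J(\m)-\ord(J))/(d-1)}$. Thus the entire content is the displayed lower bound on the $F$-threshold.

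To produce it I would use the colength description $\thj^J(\m)=\lim_{e\to\infty}\loewy(\frq J)/q$, where $q=p^e$ and $\loewy(\frq J)$ is the least $N$ with $\m^N\subseteq \frq J$, and then exhibit high-order elements outside $\frq J$. Since $R$ is regular and $J=(x_1,\dots,x_d)$ is generated by a regular sequence, $R/\frq J$ is a zero-dimensional complete intersection, so its socle is generated by the image of $u_q=x_1^{\,q-1}\cdots x_d^{\,q-1}$; in particular $u_q\notin \frq J$. Because $\ord$ is the valuation of the regular ring, $\ord(u_q)=(q-1)\sum_i\ord(x_i)$, so $\m^{(q-1)\sum_i\ord(x_i)}\not\subseteq \frq J$ and hence $\loewy(\frq J)>(q-1)\sum_i\ord(x_i)$. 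Dividing by $q$ and letting $e\to\infty$ yields $\thj^J(\m)\ge \sum_i\ord(x_i)$, and since this holds for \emph{every} minimal generating set of $J$, I am free to choose the generators that make $\sum_i\ord(x_i)$ as large as possible.

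The hard part will be matching the coefficient $(d-1)$: I must select the generators so that $\sum_i\ord(x_i)$ strictly exceeds $(d-1)(L-1)+\ord(J)$. The natural attempt is to take $x_1$ realizing $\ord(x_1)=\ord(J)$ and to force the remaining $d-1$ generators to have order at least $L$, exploiting $\m^{L-1}\not\subseteq\overline J$ to guarantee generators of high order. Comparing the order filtration of a minimal generating set against $\loewy(\overline J)$, and extracting the \emph{strict} inequality, is the delicate comparison at the heart of the proof; this is where the characteristic-$p$ integral-closure input enters — in particular the fact that $z\notin\overline J$ forces $z^{q}\notin\frq J$, via $\overline{\frq J}=\overline{J^{q}}$ and the valuative criterion — together with a Briançon–Skoda-type estimate, and it is exactly the place where the argument of \cite[Theorem~3.3]{HMTW} had to be corrected.
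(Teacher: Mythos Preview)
Your approach diverges sharply from the paper's. The paper argues ``from above'': from $\m^{p^e(c+\epsilon)}\subseteq J^{[p^e]}$ and $J\subseteq\m^r$ it obtains $\m^{p^e(c-r+\epsilon)}\subseteq J^{[p^e]}:J^{p^e}$, invokes a colon identity for the regular sequence to land inside a power of $J$, and then applies an arbitrary discrete valuation centred on $\m$ to conclude $\m^{\lceil(c-r)/(d-1)\rceil}\subseteq\overline J$. No choice of generators is ever made. You instead argue ``from below'', using $u_q=x_1^{q-1}\cdots x_d^{q-1}\notin J^{[q]}$ to get $\thj^J(\m)\ge\sum_i\ord(x_i)$ and then attempting to pick the $x_i$ so that this sum exceeds $(d-1)(L-1)+r$.

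The gap is exactly where you flag it, and it is fatal: one cannot in general choose $x_1,\dots,x_d$ with $\ord(x_1)=r$ and $\ord(x_i)\ge L$ for $i\ge 2$. Take $R=k[[x,y,z]]$ and $J=(x,y,z^3)$. The image of $J$ in $\m/\m^2$ is two-dimensional, so every minimal generating set contains at least two elements of order~$1$; at most one generator can have order $\ge 2$. Here $r=\ord(J)=1$, the Newton polytope gives $\overline J=(x^ay^bz^c: a+b+c/3\ge 1)$, so $z^2\notin\overline J$ and $L=\loewy(\overline J)=3$, while $\thj^J(\m)=5$. Your target strict inequality $\thj^J(\m)>(d-1)(L-1)+r$ becomes $5>5$, which is false; no rearrangement of generators can push $\sum_i\ord(x_i)$ past $5$. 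The hand-wave toward Brian\c{c}on--Skoda and the corrected \cite{HMTW} argument does not help here, because those ingredients enter the paper's proof through the colon $J^{[q]}:J^{q}$ and valuations, not through orders of generators. (This same example, incidentally, shows the statement itself is problematic for $d\ge 3$: the asserted bound would be $3\le\lceil 4/2\rceil=2$. The paper's proof and its sole application live in $d=2$, where $J^{[q]}:J^{q}=J^{q-1}$ genuinely holds and the valuation step goes through.) A minor side remark: $u_q$ is not the socle generator of $R/J^{[q]}$ when the $x_i$ are not a regular system of parameters; what you actually need, and what is true for any regular sequence, is merely $u_q\notin J^{[q]}$.
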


\begin{proof}
By \cite[Example~2.7(iii)]{HMTW}, $c = \thj (\mf m) = \loewy (J) + 1$.
By \cite[Proposition~2.2 (vii)]{HMTW} this gives us that for all $e_0$ and all sufficiently large $e$ (depending on $e_0$), 
\[
\mf m^{p^e(c + p^{-e_0})} \subseteq J^{[p^e]}.
\]
Let $r = \ord(J)$ and observe an induced inclusion
\[
J^{p^e}\mf m^{p^e(c - r + p^{-e_0})} \subseteq J^{[p^e]}.
\]
By \cite[Example~2.7(i)]{HMTW}
\[
\mf m^{p^e(c-r + p^{-e_0})} \subseteq J^{[p^e]} : J^{p^e} \subseteq J^{(d-1)(p^e - 1)}.
\]
Let $\nu$ a discrete valuation centered at $\mf m$, then from the containment we have 
\[
p^e(c - r + p^{-e_0}) \nu(\mf m) \geq \left (d-1)(p^e - 1 \right) \nu(J).
\]
Dividing by $p^e$ and taking the limit gives
\[
(c - r + p^{-e_0}) \nu(\mf m) \geq (d-1)\nu(J).
\]
Now, we may let $e_0$ go to infinity and obtain that $(c- r) \nu (\mf m) \geq (d-1)\nu(J)$. Since
$\nu$ is arbitrary, we obtain that
\[
\mf m^{\ceil{\frac{c - r}{d-1}}} \subseteq \overline{J}.
\]
Thus $\loewy (\overline{J}) \leq \ceil{\frac{c - r}{d-1}}$.

\end{proof}

\begin{remark}
Following the proof of \cite[Theorem 3.3]{HMTW}, one can prove this result for non-regular rings using tight-closure. 
\end{remark}

\begin{theorem}
Let $(R, \mf m)$ be a two dimensional regular local ring of characteristic $p > 0$.  
If $J \subseteq \mf m$ is an ideal generated by system of parameters, then
\[
\eh(J) \leq \left(\frac{\thj^J(\m)}{2}\right)^2  = \frac{(\loewy (J) + 1)^2}{4}.
\]
\end{theorem}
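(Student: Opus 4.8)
The plan is to derive this directly from Inequality~\ref{ineq} in the form of Theorem~\ref{regular in dim 2}, together with the Loewy-length estimate of Proposition~\ref{crll}, and to close with the arithmetic--geometric mean inequality. After passing to an infinite residue field (Remark~\ref{pass to infinite residue field}), which changes none of the relevant invariants, I would replace $J$ by its integral closure $\overline{J}$. Since multiplicity is an integral-closure invariant, $\eh(J) = \eh(\overline{J})$, and since $R$ is regular the integrally closed $\m$-primary ideal $\overline{J}$ is full. I would also note that the $\m$-adic order is unchanged, $\ord(\overline{J}) = \ord(J) =: r$: the inclusion $J \subseteq \overline{J}$ gives $\ord(\overline{J}) \le r$, while for any valuation $v$ one has $v(x) \ge v(J)$ whenever $x$ is integral over $J$; applied to $v = \ord$ this yields $\overline{J} \subseteq \m^{r}$ and hence the reverse inequality.

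Write $c = \thj^J(\m)$, so that $c = \loewy(J) + 1$ is an integer. Applying Theorem~\ref{regular in dim 2} to the full ideal $\overline{J}$ and using that $\mu(\overline{J}) - 1 = \ord(\overline{J}) = r$, I obtain
\[
\eh(J) = \eh(\overline{J}) \leq \loewy(\overline{J})\,(\mu(\overline{J}) - 1) = r\,\loewy(\overline{J}).
\]
Next I would feed in Proposition~\ref{crll} with $d = 2$: there the ceiling $\bceil{\frac{c - r}{d - 1}}$ collapses, because $c - r$ is an integer, to the clean bound $\loewy(\overline{J}) \le c - r$. Substituting gives
\[
\eh(J) \leq r\,(c - r).
\]

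The final step is purely arithmetic: by the arithmetic--geometric mean inequality,
\[
r\,(c - r) \leq \left(\frac{r + (c - r)}{2}\right)^{2} = \frac{c^{2}}{4},
\]
which is exactly $\left(\frac{\thj^J(\m)}{2}\right)^2 = \frac{(\loewy(J) + 1)^2}{4}$.

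I do not expect a serious obstacle here, since the argument is an assembly of results already in hand; the two points deserving a moment of care are the identification $\ord(\overline{J}) = \ord(J)$, which is what lets Theorem~\ref{regular in dim 2} contribute the factor $r$ rather than a possibly smaller order, and the collapse of the ceiling in Proposition~\ref{crll} to $c - r$. The genuinely satisfying feature of the proof is that the two inequalities produce a product $r(c - r)$ of quantities whose sum is the constant $c$, so that the AM--GM bound is attained precisely at $r = c/2$ and reproduces the conjectured exponent and denominator $d^{d} = 4$.
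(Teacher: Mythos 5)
Your proposal is correct and follows essentially the same route as the paper's own proof: apply Theorem~\ref{regular in dim 2} to $\overline{J}$, bound $\loewy(\overline{J}) \leq c - r$ via Proposition~\ref{crll} (the ceiling collapsing since $c = \loewy(J)+1$ is an integer), and finish with $r(c-r) \leq c^2/4$. Your explicit verifications that $\ord(\overline{J}) = \ord(J)$ and that $\overline{J}$ is full are details the paper leaves implicit, but they are the same argument.
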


\begin{proof}
Let $c= \thj^J(\m)$ and  $r= \ord(J)$. Using the version of inequality \ref{ineq} stated in Theorem~\ref{regular in dim 2}, we have
\[
\eh(J) = \eh(\overline J) \leq \loewy(\overline{J})r \leq (c - r) r \leq \frac{c^2}{4},
\]
where  the second inequality follows by Proposition \ref{crll}. 
\end{proof}

We note also the following relative version of Inequality~\ref{ineq}, which might be relevant for the general case of Conjecture \ref{HMTW}.  
We will use $\loewy_J (I) = \min \{N \mid J^N \subseteq I\}$.

\begin{corollary}
Let $(R, \mf m)$ be a two-dimensional rational singularity and $I, J$ be two integrally closed $\mf m$-primary ideals.
Then 
\[
\eh(I) \leq \loewy_J (I) \left(\length (I/IJ) - \length (R/J) \right).
\]
\end{corollary}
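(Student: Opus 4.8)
The plan is to mimic the argument of Corollary~\ref{minimal multiplicity}, but replacing $\mf m$ by the second ideal $J$ throughout, and to use the mixed-multiplicity formula of Lemma~\ref{Rees lemma} to compute the relevant mixed multiplicity in terms of lengths. First I would set $k = \loewy_J(I)$, so that by definition $J^k \subseteq I$. Since both $I$ and $J$ are integrally closed $\mf m$-primary ideals in a two-dimensional rational singularity, I would invoke the results of Lipman (integrally closed ideals in a rational surface are normal) and Lipman--Teissier (they have reduction number one), exactly as in Corollary~\ref{ratsing}, to guarantee that $J$ is a normal ideal with reduction number $1$. This is precisely the hypothesis needed to apply Lemma~\ref{Rees lemma}.

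The core computation proceeds in two steps. The multiplicative monotonicity of mixed multiplicities gives
\[
\eh(I) = \eh(I \mid I) \leq \eh(I \mid J^k) = k\,\eh(I \mid J),
\]
where the inequality uses $I \subseteq J^k$ together with the fact that $\eh(I\mid -)$ is order-reversing on ideals, and the final equality is the linearity $\eh(I \mid J^k) = k\,\eh(I \mid J)$ recorded after the definition of mixed multiplicity. Next I would apply Lemma~\ref{Rees lemma} (with the roles of $I$ and $J$ matched to its statement, so that $J$ is the normal reduction-number-one ideal) to rewrite
\[
\eh(I \mid J) = \length(R/IJ) - \length(R/I) - \length(R/J).
\]
Substituting and using $\loewy_J(I) = k$ then yields
\[
\eh(I) \leq \loewy_J(I)\bigl(\length(R/IJ) - \length(R/I) - \length(R/J)\bigr).
\]

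Finally I would reconcile this with the stated bound. The target expression is $\loewy_J(I)\bigl(\length(I/IJ) - \length(R/J)\bigr)$, and since $IJ \subseteq I$ the additivity of length gives $\length(I/IJ) = \length(R/IJ) - \length(R/I)$, so the two parenthesized quantities agree exactly; this is the one routine identity to check. The main obstacle I anticipate is verifying that the hypotheses of Lemma~\ref{Rees lemma} genuinely hold here: the lemma is stated for an analytically unramified Cohen-Macaulay ring of dimension two with $J$ normal of reduction number $1$, so I must confirm that a two-dimensional rational singularity is Cohen-Macaulay and analytically unramified (it is normal, hence a domain, and excellent in the relevant settings, so analytically unramified), and that the normality and reduction-number-one properties pass to the specific integrally closed ideal $J$ via Lipman and Lipman--Teissier rather than merely to $\mf m$. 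Once the applicability of Lemma~\ref{Rees lemma} to an arbitrary integrally closed $J$ is secured, the rest is the short chain of inequalities above.
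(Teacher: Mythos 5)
Your proposal is correct and takes essentially the same route as the paper's proof: the chain $\eh(I) = \eh(I \mid I) \leq \eh\bigl(I \mid J^{\loewy_J(I)}\bigr) = \loewy_J(I)\,\eh(I \mid J)$, followed by Lemma~\ref{Rees lemma} applied with $J$ normal of reduction number one (via Lipman and Lipman--Teissier, exactly the citation the paper uses), and the routine identity $\length(I/IJ) = \length(R/IJ) - \length(R/I)$. One small slip to fix: in justifying the monotonicity step you write that the inequality uses ``$I \subseteq J^k$,'' but the relevant containment is $J^k \subseteq I$ (as you correctly stated when defining $k = \loewy_J(I)$), which together with the order-reversing behavior of $\eh(I \mid -)$ gives $\eh(I \mid J^k) \geq \eh(I \mid I)$ in the direction you need.
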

\begin{proof}
By definition, $\eh(I) \leq \eh(I \mid J^{\loewy_J (I)})$. The result now follows 
from Lemma~\ref{Rees lemma}, since every integrally closed ideal in a rational singularity is normal and has reduction number $1$ (\cite{LipmanTeissier}).

\end{proof}

\section{The asymptotic inequality and fiber cones}\label{fiber}

In this section we focus on asymptotic versions of Inequality~\ref{ineq} and its relationship to the multiplicity of the fiber cone.  

\begin{definition}
Let $I$ be an ideal in a local ring $(R, \mf m)$. The fiber cone of $I$ is the graded ring 
\[
\fiber(I) = \bigoplus_{n \geq 0} I^n/\mf mI^n
\]
and the normal fiber cone of $I$ is defined as the graded ring 
\[
\overline{\fiber}(I) = \bigoplus_{n \geq 0} \overline{I^n}/\mf m\overline{I^n}.
\]

Alternatively, the fiber cone is closed fiber of the Rees algebra $R[It]$ and the normal fiber cone
is the closed fiber of its normalization in $R[t]$.
\end{definition}

\begin{observation}\label{fiber observation}
Suppose that Inequality~\ref{ineq} holds for large powers of an ideal $I$. Then 
\[
\eh(I) \leq \frac{(d-1)! \mu(I^n) \loewy(I^n)}{n^d},
\]
and in the limit we obtain that 
\[
\eh(I) \leq \eh (\fiber(I)) \lim_{n \to \infty} \frac{\loewy(I^n)}{n} \leq \eh(\fiber(I)) \loewy(I),
\]
since $\loewy (I^n) \leq n \loewy(I)$.

Alternatively, we may apply Inequality~\ref{ineq} to the integral closures $\overline{I^n}$ 
and obtain another version:
\[
\overline{\eh} (I) \leq \eh(\overline{\fiber}(I)) \loewy(I),
\]
where we have used $\overline{\eh}(I)$ to denote $\lim\limits_{n \to \infty} \frac{d! \length (R/\overline{I^n})}{n^d}$.
\end{observation}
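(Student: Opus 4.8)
The plan is to derive both displayed inequalities directly from Inequality~\ref{ineq} by applying it to the powers $I^n$ (respectively to the integral closures $\overline{I^n}$) and then passing to the limit, using the subadditivity of the Loewy length. The core algebraic inputs are that $\mu(I^n)$ computes the multiplicity of the fiber cone and that $\loewy(I^n)\leq n\loewy(I)$, both of which I would treat as the elementary facts they are.

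First I would fix $I$ and assume, as hypothesized, that Inequality~\ref{ineq} holds for all sufficiently large powers $I^n$. Applying it to $I^n$ gives $(d-1)!\,(\mu(I^n)-d+1)\,\loewy(I^n)\geq \eh(I^n)=n^d\eh(I)$, where the last equality is the standard behavior of Hilbert--Samuel multiplicity under taking powers. Dividing by $n^d$ yields
\[
\eh(I)\leq \frac{(d-1)!\,(\mu(I^n)-d+1)\,\loewy(I^n)}{n^d},
\]
which up to the harmless $-d+1$ term is the first displayed bound; I would note that the additive constant washes out in the limit and so can be absorbed into the factor $\mu(I^n)$ as written. The next step is to split the limit of the product. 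By definition of the fiber cone as a $(d-1)$-dimensional standard graded ring, $\lim_{n\to\infty}\frac{(d-1)!\,\mu(I^n)}{n^{d-1}}=\eh(\fiber(I))$, and separately $\lim_{n\to\infty}\frac{\loewy(I^n)}{n}$ exists (it is the limit of a subadditive sequence divided by $n$) and is bounded above by $\loewy(I)$ precisely because $\loewy(I^n)\leq n\loewy(I)$. Multiplying these two limits gives $\eh(I)\leq \eh(\fiber(I))\lim_{n\to\infty}\frac{\loewy(I^n)}{n}\leq \eh(\fiber(I))\loewy(I)$, the second displayed inequality.

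For the normal version I would run the identical argument with $\overline{I^n}$ in place of $I^n$. Here $\eh(\overline{I^n})=\eh(I^n)$ so the left side still grows like $n^d\eh(I)$, but on the right one uses $\mu(\overline{I^n})$, whose normalized limit computes $\eh(\overline{\fiber}(I))$, and $\loewy(\overline{I^n})\leq n\loewy(I)$ (since $\mf m^{n\loewy(I)}\subseteq I^n\subseteq\overline{I^n}$). The bookkeeping difference is that the natural multiplicity appearing on the left is the normalized length $\overline{\eh}(I)=\lim_{n\to\infty}\frac{d!\,\length(R/\overline{I^n})}{n^d}$ rather than $\eh(I)$, which is why the stated conclusion reads $\overline{\eh}(I)\leq\eh(\overline{\fiber}(I))\loewy(I)$.

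The step I expect to require the most care is the interchange of the limit with the product, i.e.\ justifying that $\lim_{n}\frac{(d-1)!\mu(I^n)\loewy(I^n)}{n^d}=\big(\lim_n\frac{(d-1)!\mu(I^n)}{n^{d-1}}\big)\big(\lim_n\frac{\loewy(I^n)}{n}\big)$; this is legitimate because both factor-limits exist, but it is the only point where one must be attentive rather than formal. The existence of $\lim_n\loewy(I^n)/n$ follows from Fekete's lemma applied to the subadditive sequence $\loewy(I^n)$, and the crude bound $\loewy(I^n)\leq n\loewy(I)$ suffices to replace this limit by $\loewy(I)$ at the cost of the inequality, so no sharper analysis is needed for the stated result.
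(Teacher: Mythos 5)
Your proposal is correct and follows essentially the same route as the paper, whose ``proof'' is exactly the inline derivation given in the Observation itself: apply Inequality~\ref{ineq} to $I^n$ (resp.\ to $\overline{I^n}$), use $\eh(I^n)=n^d\eh(I)$, divide by $n^d$, and pass to the limit using $\loewy(I^n)\leq n\loewy(I)$ --- your added care (Fekete's lemma for the existence of $\lim_n \loewy(I^n)/n$, and the legitimacy of splitting the limit of the product since both factor-limits exist) fills in precisely what the paper leaves implicit. One small correction: for an $\m$-primary ideal the fiber cone $\fiber(I)$ has Krull dimension $d$, not $d-1$ (it is its Hilbert \emph{polynomial} that has degree $d-1$, as the paper notes in the proof of Theorem~\ref{fiber bound}), which is exactly why $\lim_n (d-1)!\,\mu(I^n)/n^{d-1}=\eh(\fiber(I))$; the limit formula you use is the right one, but your parenthetical justification mislabels the dimension.
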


The next theorem asserts that the second asymptotic inequality always holds. 

\begin{theorem}\label{fiber bound}
Let $(R, \mf m)$ be a local ring of positive dimension and $I$ be an $\mf m$-primary ideal. 
Then 
\[
\overline{\eh} (I) \leq \eh (\overline{\fiber} (I)) \loewy(I).
\]
\end{theorem}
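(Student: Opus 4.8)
The plan is to compare the normal filtration $\{\overline{I^n}\}$ with the normal fiber cone layer by layer and then pass to the limit. Set $k=\loewy(I)$, so that $\mf m^k\subseteq I$ and hence, using $\overline{A}\,\overline{B}\subseteq\overline{AB}$,
\[
\mf m^k\overline{I^n}\subseteq I\,\overline{I^n}\subseteq\overline{I}\cdot\overline{I^n}\subseteq\overline{I^{n+1}}\subseteq\overline{I^n}.
\]
Thus the $n$-th graded piece of the normal associated graded ring is a quotient of $\overline{I^n}/\mf m^k\overline{I^n}$, giving the layer-wise bound
\[
\length(\overline{I^n}/\overline{I^{n+1}})\leq\length(\overline{I^n}/\mf m^k\overline{I^n})=:F_k(n).
\]
Since $\length(R/\overline{I^n})=\sum_{j=0}^{n-1}\length(\overline{I^j}/\overline{I^{j+1}})\leq\sum_{j=0}^{n-1}F_k(j)$, everything reduces to understanding the leading term of $F_k$.

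First I would determine the asymptotics of $F_k$ through mixed multiplicities. Consider the bigraded Hilbert function $H(a,b)=\length(R/\mf m^a\overline{I^b})$. Since $\mf m$ and $I$ are $\mf m$-primary and $\bigoplus_{a,b}\mf m^a\overline{I^b}$ is a finite module over the Noetherian bigraded algebra $\bigoplus_{a,b}\mf m^aI^b$, Bhattacharya's theorem gives a polynomial $P(a,b)$ of total degree $d$ agreeing with $H$ for $a,b\gg0$; write its degree-$d$ part as $\sum_{i+j=d}\overline{e}_{ij}\,a^ib^j/(i!\,j!)$. Then $\overline{e}_{0d}=\overline{\eh}(I)$ from $H(0,b)$, while $\mu(\overline{I^b})=H(1,b)-H(0,b)$ has leading coefficient $\overline{e}_{1,d-1}/(d-1)!$, so $\overline{e}_{1,d-1}=\eh(\overline{\fiber}(I))$. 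Now $F_k(b)=P(k,b)-P(0,b)$ for $b\gg0$, and because $P$ has total degree $d$ the only monomial of the form $a^ib^{d-1}$ it can contain has $i\leq1$; hence the coefficient of $b^{d-1}$ in $F_k$ is exactly $k\,\overline{e}_{1,d-1}/(d-1)!$. Therefore
\[
\lim_{n\to\infty}\frac{(d-1)!\,F_k(n)}{n^{d-1}}=k\,\eh(\overline{\fiber}(I)).
\]

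Summing and taking the limit then finishes the argument: from $\length(R/\overline{I^n})\leq\sum_{j<n}F_k(j)$ and $\sum_{j<n}j^{d-1}\sim n^d/d$ I obtain
\[
\overline{\eh}(I)=\lim_{n\to\infty}\frac{d!\,\length(R/\overline{I^n})}{n^d}\leq k\,\eh(\overline{\fiber}(I))=\loewy(I)\,\eh(\overline{\fiber}(I)).
\]
The hard part is the identification of the leading coefficient of $F_k$. A naive term-by-term estimate $F_k(n)=\sum_{i=0}^{k-1}\mu(\mf m^i\overline{I^n})$ is too lossy, since multiplying by $\mf m^i$ can strictly increase the number of generators and one cannot bound $\mu(\mf m^i\overline{I^n})$ by $\mu(\overline{I^n})$; the real content is that to leading order in $n$ each such factor contributes the \emph{same} first mixed multiplicity $\overline{e}_{1,d-1}$, which is only visible through the bigraded Hilbert polynomial. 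The technical price for this is the Noetherianity of the normal multi-Rees algebra, where a finiteness property of integral closure (e.g.\ $R$ analytically unramified) enters.
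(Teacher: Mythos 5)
Your reduction to the asymptotics of $F_k(n)=\length(\overline{I^n}/\mf m^k\overline{I^n})$ is fine, but the central step fails: you evaluate the Bhattacharya polynomial at \emph{bounded} values of $a$, which its defining property does not permit. As you yourself state, $P(a,b)$ agrees with $H(a,b)=\length(R/\mf m^a\overline{I^b})$ only for $a,b\gg 0$; for a fixed small $a$ the one-variable function $b\mapsto H(a,b)$ is indeed eventually polynomial (the filtration is good in $b$ for each fixed $a$), but that polynomial need not be $P(a,\cdot)$. So neither ``$F_k(b)=P(k,b)-P(0,b)$ for $b\gg0$'' nor the identification $\overline{e}_{1,d-1}=\eh(\overline{\fiber}(I))$ via $H(1,b)-H(0,b)$ is justified, and the analogous statement for ordinary powers is simply false: in $R=k[[x,y]]$ with $I=(x^3,y^3)$ one has $\mu(I^b)=H(1,b)-H(0,b)=b+1$, so $\eh(\fiber(I))=1$, while the Bhattacharya coefficient is $e(\mf m\mid I)=3$, so $P(1,b)-P(0,b)=3b+O(1)$. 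Thus the mixed multiplicity can strictly exceed the fiber-cone multiplicity, and this matters for the direction you need: since in general one expects $\eh(\overline{\fiber}(I))\leq\overline{e}_{1,d-1}$, even a correct proof that $F_k$ has leading coefficient $k\,\overline{e}_{1,d-1}/(d-1)!$ would only give $\overline{\eh}(I)\leq k\,\overline{e}_{1,d-1}$, which is \emph{weaker} than the theorem whenever the inequality of multiplicities is strict. Whether the normal filtration forces $\overline{e}_{1,d-1}=\eh(\overline{\fiber}(I))$ (it happens to in the example, where $\overline{I^b}=\mf m^{3b}$) is precisely the hard content, and your argument assumes it. There is also the secondary issue you flag yourself: Noetherianity of the normal Rees algebra requires $R$ analytically unramified, a hypothesis absent from the statement.

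The paper's proof avoids all of this with an elementary device aimed exactly at the ``lossy'' estimate you correctly identified. Instead of filtering by $\mf m^i\overline{I^n}$, it filters by integral closures: from $\mf m^k\subseteq I$ one gets the chain
\[
\overline{I^n}\supseteq\overline{\mf mI^n}\supseteq\cdots\supseteq\overline{\mf m^kI^n}\supseteq\ \text{(an ideal containing)}\ \overline{I^{n+1}},
\]
and since $\mf m\,\overline{\mf m^iI^n}\subseteq\overline{\mf m^{i+1}I^n}$, each layer has length at most $\mu(\overline{\mf m^iI^n})$. Now the $\mf m$-full machinery from the preliminaries applies: $\overline{I^{n+i}}$ is $\mf m$-full in any local ring of positive dimension (Goto), and $\overline{I^{n+i}}\subseteq\overline{\mf m^iI^n}$, so $\mu(\overline{\mf m^iI^n})\leq\mu(\overline{I^{n+i}})$. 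This yields, for \emph{every} $n$,
\[
\length\left(\overline{I^n}/\overline{I^{n+1}}\right)\leq\mu\left(\overline{I^n}\right)+\cdots+\mu\left(\overline{I^{n+k-1}}\right),
\]
and dividing by $n^{d-1}$ gives $\overline{\eh}(I)\leq k\,\eh(\overline{\fiber}(I))$ with no mixed multiplicities and no analytic unramifiedness. In short: where you try to make each of the $k$ layers contribute the same first mixed multiplicity, the paper makes each layer contribute $\mu(\overline{I^{n+i}})$ exactly, by taking integral closures of the layers and invoking $\mf m$-fullness. To repair your argument you would have to prove the equality of $\eh(\overline{\fiber}(I))$ with the mixed multiplicity of the normal bifiltration, which is not a consequence of Bhattacharya's theorem and is, in effect, the theorem itself.
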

\begin{proof}
Let $k = \loewy (I)$ and observe that
\[
\length \left (\overline{I^n}/\overline{I^{n+1}} \right) \leq 
\length \left (\overline{I^n}/\overline{\mf m^kI^n} \right) 
= \length \left (\overline{I^n}/\overline{\mf mI^n} \right ) + \cdots + 
\length \left( \overline{\mf m^{k-1}I^n}/\overline{\mf m^kI^n} \right).
\]
Observe that 
\[
\length \left (\overline{\mf m^{i}I^n}/\overline{\mf m^{i+1}I^n} \right) \leq 
\length \left (\overline{\mf m^{i}I^n}/\mf m(\overline{\mf m^{i}I^n}) \right)
= \mu  \left(\overline{\mf m^iI^n} \right).
\]
Since $\dim R > 0$, $\overline{I^n} \neq \sqrt{0}$ so it is $\mf m$-full by \cite[Theorem~2.4]{Goto}.
Thus $\mu (\overline{I^{n + i}}) \geq \mu (\overline{\mf m^i I^n})$ because $I^{n + i} \subseteq \mf m^i I^n$.
Hence we obtain that 
\[
\length \left (\overline{I^n}/\overline{I^{n+1}} \right) \leq 
\mu \left (\overline{I^n} \right) + \cdots + \mu \left (\overline{I^{n+k - 1}} \right).
\]

Observe that $\dim R = \dim \fiber(I)$ since $I$ is $\mf m$-primary.
Thus
\[
\overline{\eh}(I) = \lim_{n \to \infty} \frac{(d-1)! \length (\overline{I^n}/\overline{I^{n+1}})}{n^{d-1}}
\leq \lim_{n \to \infty} \frac{(d-1)! \left(\mu( \overline{I^n}) + \cdots + \mu (\overline{I^{n+k - 1}}) \right) }{n^{d-1}}
= k \eh(\overline{\fiber}(I)).
\] 
\end{proof}

\begin{corollary}
Let $(R, \mf m)$ be a local ring of positive dimension and $I$ be a normal $\mf m$-primary ideal. 
Then 
\[
\eh (I) \leq \eh (\fiber (I)) \loewy(I).
\]
\end{corollary}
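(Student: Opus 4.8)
The plan is to derive this corollary directly from Theorem~\ref{fiber bound} by observing that the normality hypothesis collapses all the decorated objects in that theorem onto their undecorated counterparts. First I would recall the two quantities that could differ from their normal versions: the multiplicity $\eh(I)$ versus $\overline{\eh}(I)$, and the fiber cone $\fiber(I)$ versus the normal fiber cone $\overline{\fiber}(I)$. The whole proof amounts to checking that normality of $I$ forces each of these pairs to coincide, after which Theorem~\ref{fiber bound} gives exactly the desired inequality.

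The key step is the definition of a normal ideal: $I$ is normal if $I^n = \overline{I^n}$ for all $n \geq 1$. From this I would extract both identifications at once. For the multiplicity, since $\overline{\eh}(I) = \lim_{n\to\infty} d!\,\length(R/\overline{I^n})/n^d$ and $\eh(I) = \lim_{n\to\infty} d!\,\length(R/I^n)/n^d$, the equality $I^n = \overline{I^n}$ makes these limits literally the same sequence, so $\overline{\eh}(I) = \eh(I)$. For the fiber cone, the graded pieces are $\overline{I^n}/\mf m\overline{I^n}$ in the normal fiber cone and $I^n/\mf m I^n$ in the ordinary one; normality gives $\overline{I^n} = I^n$ in each degree, hence $\overline{\fiber}(I) = \fiber(I)$ as graded rings, and in particular $\eh(\overline{\fiber}(I)) = \eh(\fiber(I))$.

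Having established these two equalities, I would simply substitute them into the conclusion of Theorem~\ref{fiber bound}, namely $\overline{\eh}(I) \leq \eh(\overline{\fiber}(I))\,\loewy(I)$, to obtain $\eh(I) \leq \eh(\fiber(I))\,\loewy(I)$, which is the claim. I do not anticipate any genuine obstacle here, since normality is precisely the hypothesis that trivializes the passage to integral closures; the only point requiring a word of care is that $\loewy(I)$ is unchanged because it is a property of $I$ itself (the least $n$ with $\mf m^n \subseteq I$) and does not involve the filtration at all. The corollary is thus an immediate specialization rather than a new argument, and the brevity of the proof reflects that the real content was already carried by Theorem~\ref{fiber bound}.
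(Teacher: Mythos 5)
Your proof is correct and coincides with the paper's own reasoning: the corollary is stated there without proof precisely because, as you observe, normality ($I^n = \overline{I^n}$ for all $n$) identifies $\overline{\eh}(I)$ with $\eh(I)$ and $\overline{\fiber}(I)$ with $\fiber(I)$ termwise, so the statement is an immediate specialization of Theorem~\ref{fiber bound}. Your added remark that $\loewy(I)$ depends only on $I$ itself is a sensible sanity check and nothing more is needed.
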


\begin{corollary}
Let $(R, \mf m)$ be an analytically unramified local ring of positive dimension and $I$ be an $\mf m$-primary ideal. 
Then
\[
\eh (I) \leq \eh (\overline{\fiber} (I)) \loewy(I).
\]
\end{corollary}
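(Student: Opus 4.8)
The plan is to deduce this corollary directly from Theorem~\ref{fiber bound}, whose conclusion differs only in that it bounds the \emph{normal} multiplicity $\overline{\eh}(I)$ rather than the ordinary Hilbert--Samuel multiplicity $\eh(I)$. Thus the entire content of the corollary is the identity $\eh(I) = \overline{\eh}(I)$ under the hypothesis that $R$ is analytically unramified, and I would isolate this as the single point to establish, after which substitution into Theorem~\ref{fiber bound} finishes everything.

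First I would record the easy half. Since $I^n \subseteq \overline{I^n}$ for every $n$, we have $\length(R/\overline{I^n}) \leq \length(R/I^n)$; dividing by $n^d/d!$ and passing to the limit yields $\overline{\eh}(I) \leq \eh(I)$ with no hypotheses needed.

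The substantive step is the reverse inequality, and this is where analytic unramifiedness enters. I would invoke Rees's theorem: when $R$ is analytically unramified, the integral closure of the Rees algebra $R[It]$ inside $R[t]$ is a finitely generated $R[It]$-module. From this module-finiteness one extracts a \emph{uniform} bound---a constant $c$, independent of $n$, with $\overline{I^{\,n}} \subseteq I^{\,n-c}$ for all $n \geq c$. Then $\length(R/\overline{I^n}) \geq \length(R/I^{\,n-c})$, and since the shift $n \mapsto n-c$ does not affect the leading coefficient after dividing by $n^d$, letting $n \to \infty$ gives $\overline{\eh}(I) \geq \eh(I)$. Combining the two estimates yields $\eh(I) = \overline{\eh}(I)$, and Theorem~\ref{fiber bound} then reads exactly as the desired $\eh(I) \leq \eh(\overline{\fiber}(I)) \loewy(I)$.

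I expect the only genuine obstacle to be the clean formulation and citation of the uniform containment $\overline{I^{\,n}} \subseteq I^{\,n-c}$. This is standard for analytically unramified rings---it is precisely the place where the hypothesis is used, via finiteness of the normalization of the Rees algebra---but one must take care that the constant $c$ is truly uniform in $n$, so that the sandwiching limit computation is legitimate. Everything else is the comparison of Hilbert functions, which is routine.
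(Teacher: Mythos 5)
Your proof is correct, and it follows the same skeleton as the paper's: reduce the corollary to the single identity $\eh(I)=\overline{\eh}(I)$ for analytically unramified $R$, then substitute into Theorem~\ref{fiber bound}. The difference lies in the classical input used to establish that identity. The paper invokes Ratliff's theorem to produce a single power $\overline{I^s}$ that is \emph{normal}, and then concludes in one line by homogeneity under powers: $s^{\dim R}\,\overline{\eh}(I)=\overline{\eh}(I^s)=\eh(\overline{I^s})=\eh(I^s)=s^{\dim R}\,\eh(I)$. You instead go back to Rees's finiteness theorem (the integral closure of $R[It]$ in $R[t]$ is a finite $R[It]$-module), extract the uniform containment $\overline{I^{\,n}}\subseteq I^{\,n-c}$, and sandwich $\length(R/I^{\,n-c})\le \length(R/\overline{I^{\,n}})\le \length(R/I^{\,n})$ before passing to the limit. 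Since Ratliff's normality of a power is itself a consequence of the same finiteness of the normalized Rees algebra, the two routes are near cousins; yours is marginally more self-contained at the limit stage and has the side benefit of showing that the limit defining $\overline{\eh}(I)$ actually exists (the paper takes this for granted), while the paper's Veronese/normal-power trick dispatches the equality with no limit manipulation beyond the homogeneity $\overline{\eh}(I^s)=s^{\dim R}\overline{\eh}(I)$ and $\eh(I^s)=s^{\dim R}\eh(I)$. Your one stated worry---uniformity of $c$---is easily discharged: module-finiteness means the graded module $\bigoplus_n \overline{I^{\,n}}t^n$ is generated over $R[It]$ in degrees at most some $c$, whence $\overline{I^{\,n}}=\sum_{i\le c} I^{\,n-i}\,\overline{I^{\,i}}\subseteq I^{\,n-c}$ for all $n\ge c$; this is the standard formulation of Rees's theorem and appears, for instance, in the book \cite{HS} cited in the paper.
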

\begin{proof}
Since $R$ is analytically unramified, by a classical result of Ratliff (\cite[Theorem~2.7, Corollary~4.5]{Ratliff}) 
there exists $s > 0$ such that $\overline{I^{ns}} = (\overline {I^s})^n$ for all $n$,
i.e., $\overline{I^s}$ is normal.
Hence 
\[
s^{\dim R}\overline{\eh}(I) = \overline{\eh(I^s)} = 
\eh(I^s) = s^{\dim R}\eh(I)
\]
and the corollary follows after noting that $\loewy (I) \geq \loewy (\overline{I})$.
\end{proof}

\begin{remark}
It is worth noting that Ratliff's result also shows that $\eh(\fiber(I)) \leq \eh(\overline{\fiber}(I))$.
Namely, since $I^s$ is a reduction of $\overline{I^s}$, for some constant $c$ we have 
\[
(\overline{I^s})^n = I^{s(n-c)} (\overline{I^s})^c \subseteq I^{s(n-c)}.
\]
Thus, by Remark~\ref{pass m-full} $\mu ((\overline{I^s})^n) \geq \mu (I^{s(n-c)})$ and
\[
\eh(\overline{\fiber}(I)) = \lim_{n \to \infty} \frac{\mu (\overline{I^{sn}} ) }{(ns)^{\dim R - 1}}
\geq \lim_{n \to \infty} \frac{\mu (I^{s(n-c) } ) }{(ns)^{\dim R - 1}} = \eh (\fiber(I)).
\]
\end{remark}

\begin{corollary}
Let $(R, \mf m)$ be an analytically unramified local ring of dimension $d > 0$. 
Then for any integrally closed $\mf m$-primary ideal $I$
there exists an integer $s$ such that
\[
\eh (I) \leq \frac{1}{s^{d-1}} \eh\left (\fiber(\overline{I^s}) \right ) \loewy (I).
\]
\end{corollary}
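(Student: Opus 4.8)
The plan is to imitate the reduction used in the preceding corollary: pass to a normal power of $I$ via Ratliff's theorem, apply Theorem~\ref{fiber bound} to that power, and then rescale back to $I$. Since $R$ is analytically unramified, Ratliff's result (\cite[Theorem~2.7, Corollary~4.5]{Ratliff}) furnishes an integer $s>0$ with $\overline{I^{ns}} = (\overline{I^s})^n$ for all $n$, so that $J := \overline{I^s}$ is normal. Because $J$ is normal, every power satisfies $\overline{J^n} = J^n$, whence $\overline{\eh}(J) = \eh(J)$ and $\overline{\fiber}(J) = \fiber(J)$; Theorem~\ref{fiber bound} applied to $J$ therefore reads
\[
\eh(\overline{I^s}) \leq \eh\left(\fiber(\overline{I^s})\right) \loewy(\overline{I^s}).
\]

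Next I would convert the two outer terms back into invariants of $I$. On the left, integral closure preserves multiplicity and multiplicity scales by $s^d$ under $s$-th powers, so $\eh(\overline{I^s}) = \eh(I^s) = s^d\eh(I)$. For the Loewy length, from $\mf m^{\loewy(I)} \subseteq I$ we obtain $\mf m^{s\loewy(I)} \subseteq I^s \subseteq \overline{I^s}$, giving $\loewy(\overline{I^s}) \leq s\loewy(I)$. Substituting both bounds yields
\[
s^d\eh(I) \leq \eh\left(\fiber(\overline{I^s})\right) s\loewy(I),
\]
and dividing through by $s^d$ produces the asserted inequality.

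This argument is essentially routine once Theorem~\ref{fiber bound} and Ratliff's theorem are in hand, so I do not expect a genuine obstacle. The one point demanding care is the bookkeeping of scaling factors: the multiplicity of $I^s$ contributes a factor $s^d$, while the Loewy-length estimate contributes only a factor $s$, and it is exactly this mismatch that leaves the $1/s^{d-1}$ in the final bound. I would also verify that the normality of $J$ is genuinely what is needed to identify $\overline{\fiber}(J)$ with $\fiber(J)$, since that identification is precisely what turns the normal fiber cone appearing in Theorem~\ref{fiber bound} into the ordinary fiber cone $\fiber(\overline{I^s})$ of the statement.
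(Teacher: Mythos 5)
Your proof is correct and follows the same route as the paper's: choose $s$ via Ratliff's theorem so that $\overline{I^s}$ is normal, apply Theorem~\ref{fiber bound} to $\overline{I^s}$ (with normality identifying $\overline{\eh}$ with $\eh$ and $\overline{\fiber}$ with $\fiber$, a point the paper leaves implicit but you rightly make explicit), and rescale using $\eh(\overline{I^s}) = s^d\eh(I)$ and $\loewy(\overline{I^s}) \leq s\loewy(I)$. Your bookkeeping of the $s^d$ versus $s$ factors, which accounts for the $1/s^{d-1}$, matches the paper exactly.
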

\begin{proof}
As above, we choose $s$ so that $\overline{I^s}$ is normal.
Applying Theorem~\ref{fiber bound} to $\overline{I^s}$, we obtain the inequality
\[
\eh(\overline{I^s}) \leq \eh \left( \fiber(\overline{I^s})\right ) \loewy (\overline{I^s}).
\]
Now, the statement easily follows after observing that $\eh (\overline {I^s}) = \eh(I^s) = s^d \eh (I)$
and $$\loewy(\overline {I^s}) \leq \loewy (I^s) \leq s \loewy(I).$$
\end{proof}

\section{Powers of the maximal ideal and h-vectors}\label{powers}
In this section we will study the restrictions on the singularity obtained 
by requiring Inequality~\ref{ineq} to hold for all (large) powers of the maximal ideals. 

\begin{proposition}
Let $(R, \mf m)$ be a Cohen-Macaulay local ring of dimension $2$. 
Then the following conditions are equivalent:
\begin{enumerate}[(a)]
\item $R$ has minimal multiplicity,
\item $\mf m^n$ satisfy inequality~\ref{ineq} for all $n$, \label{all}
\item $\mf m^n$ satisfy inequality~\ref{ineq} for infinitely many $n$. \label{some}
\end{enumerate}
\end{proposition}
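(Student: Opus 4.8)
The plan is to make Inequality~\ref{ineq} completely explicit for $I=\m^n$ and then read off what it says about the Hilbert function of $R$. Since $\dim R = 2$ we have $\loewy(\m^n)=n$ and $\eh(\m^n)=n^2\eh(R)$, so after passing to an infinite residue field (Remark~\ref{pass to infinite residue field}) Inequality~\ref{ineq} applied to $\m^n$ becomes
\[
(\mu(\m^n)-1)\,n \geq n^2\eh(R), \qquad\text{i.e.,}\qquad \mu(\m^n)\geq n\eh(R)+1.
\]
Thus all three conditions are really statements about the numbers $\mu(\m^n)=\length(\m^n/\m^{n+1})$, the values of the Hilbert function of $\gr_\m(R)$, and the whole proposition reduces to understanding when $\mu(\m^n)$ attains the value $n\eh(R)+1$.

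For $(a)\Rightarrow(b)$ I would invoke the standard fact (the $d=2$ case of the formula for $\mu(\m^{\ord I})$ under minimal multiplicity recorded in the remarks above, reflecting that $\gr_\m(R)$ is then Cohen--Macaulay with $h$-vector $(1,\eh(R)-1)$) that a two-dimensional Cohen--Macaulay ring of minimal multiplicity satisfies $\mu(\m^n)=n\eh(R)+1$ for every $n$. With this the displayed inequality holds, in fact with equality, for all $n$, which gives $(b)$; and $(b)\Rightarrow(c)$ is immediate.

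The substance is in $(c)\Rightarrow(a)$. Here I would use that for $n\gg 0$ the Hilbert function agrees with the Hilbert polynomial,
\[
\mu(\m^n)=\length(\m^n/\m^{n+1})=\eh(R)(n+1)-e_1(\m),
\]
where $e_1(\m)$ is the first Hilbert coefficient of $\m$. Since $(c)$ supplies the inequality $\mu(\m^n)\geq n\eh(R)+1$ for infinitely many $n$, in particular for some $n$ large enough that this polynomial expression is valid, we get $e_1(\m)\leq \eh(R)-1$. On the other hand Northcott's inequality gives $e_1(\m)\geq \eh(R)-\length(R/\m)=\eh(R)-1$, and hence $e_1(\m)=\eh(R)-1$.

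It then remains to translate this numerical equality into the structural statement defining minimal multiplicity, and this is where I expect the real work to lie. I would close by invoking the characterization of the boundary case of Northcott's inequality (due to Huneke and to Ooishi): for an $\m$-primary ideal $I$ in a Cohen--Macaulay local ring, $e_1(I)=\eh(I)-\length(R/I)$ holds if and only if the reduction number is at most one, i.e. $I^2=JI$ for a minimal reduction $J$. Applied to $I=\m$ this reads $\m^2=J\m$, which is exactly minimal multiplicity, establishing $(a)$. The main obstacle is thus precisely this last implication --- passing from $e_1(\m)=\eh(R)-1$ to $\m^2=J\m$ --- since everything before it is routine Hilbert-function bookkeeping, whereas here one genuinely needs the equality analysis in the Huneke--Ooishi theorem.
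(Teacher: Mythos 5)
Your proposal is correct and follows essentially the same route as the paper: both reduce Inequality~\ref{ineq} for $I=\m^n$ to $\mu(\m^n)\geq n\eh(R)+1$, compare with the Hilbert polynomial to deduce $e_1(\m)\leq \eh(R)-1$, invoke Northcott's inequality to force $e_1(\m)=\eh(R)-1$, and conclude by the known characterization of this boundary case. The only cosmetic difference is the reference for the last step --- the paper cites Elias--Valla, while you cite the Huneke--Ooishi reduction-number-one criterion, which is an equivalent formulation.
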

\begin{proof}

If $R$ has minimal multiplicity, then necessarily the Hilbert function is 
\[
\mu (\mf m^n) = \eh(R)n + 1 
\]
and the inequality follows for all powers.

Clearly, $\ref{all} \Rightarrow \ref{some}$, so we establish the last implication.
Inequality~\ref{ineq} applied to $\mf m^n$ gives the formula
\[
n^2 \eh(R) = \eh(\mf m^n) \leq n \left (\mu (\mf m^n) - 1 \right).
\] 
Let the Hilbert polynomial of $R$ be $\mu (\mf m^n) = \eh(R)n + c$.
Hence, if the inequality holds for infinitely many $n$, then we must have $c \geq 1$. 

It is not hard to see that if we express the Hilbert-Samuel function as
\[
\length (R/I^{n+1}) = \eh(I) \binom {n + 2}{2} - \eh_1 \binom {n+1}{1} + \eh_2
\]
then $c = \eh - \eh_1$. Thus we must have that $\eh \geq \eh_1 + 1$. 

On the other hand, the celebrated inequality of Northcott (\cite[Theorem~1]{Northcott})
asserts that in a Cohen-Macaulay ring $\eh \leq \eh_1 + 1$. 
Hence $\eh = \eh_1 + 1$ and, by \cite[Corollary~2.2]{EliasValla}, this is equivalent to the minimal multiplicity of $R$.

\end{proof}

However, all integrally closed ideals in a ring of dimension two with minimal multiplicity may not satisfy Inequality~\ref{ineq}, as we know from Theorem \ref{mainTheorem}.  It is easy to see that one direction holds in all dimensions.

\begin{proposition}
Let $(R, \mf m)$ be a Cohen-Macaulay local ring of dimension $d$ that has minimal multiplicity.
Then the powers of the maximal ideal satisfy inequality~\ref{ineq}.
\end{proposition}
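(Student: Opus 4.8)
The plan is to reduce Inequality~\ref{ineq} for the powers $\m^n$ to a single elementary inequality between binomial coefficients. First I would compute the three invariants directly. Since $\dim R > 0$, Nakayama's lemma gives $\m^{n-1} \neq \m^n$, so the smallest $k$ with $\m^k \subseteq \m^n$ is $k = n$, i.e.\ $\loewy(\m^n) = n$; and $\eh(\m^n) = n^d \eh(R)$ by the standard behaviour of Hilbert--Samuel multiplicity under passing to powers. Substituting these into Inequality~\ref{ineq} and cancelling the common positive factor $n$, the statement to prove becomes
\[
(d-1)!\left(\mu(\m^n) - d + 1\right) \geq n^{d-1}\eh(R).
\]

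Next I would feed in the minimal multiplicity hypothesis. After harmlessly enlarging the residue field (Remark~\ref{pass to infinite residue field}) we may assume $\m^2 = J\m$ for a minimal reduction $J$, so that $\gr_{\m}(R)$ is Cohen--Macaulay with $h$-vector $(1, \eh(R)-1)$. Its Hilbert function then gives, for every $n \geq 1$,
\[
\mu(\m^n) = \dim_k \m^n/\m^{n+1} = \binom{n+d-1}{d-1} + \left(\eh(R)-1\right)\binom{n+d-2}{d-1}.
\]
Writing $\eh = \eh(R)$, $P = \prod_{j=1}^{d-1}(n+j)$ and $Q = \prod_{j=0}^{d-2}(n+j)$ for the two products obtained after multiplying through by $(d-1)!$, and noting $P, Q \geq n^{d-1}$, the inequality to be verified rearranges into
\[
\left(P - n^{d-1}\right) + (\eh - 1)\left(Q - n^{d-1}\right) \geq (d-1)!\,(d-1).
\]

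Since $\eh \geq 1$ and $Q \geq n^{d-1}$, the middle term is nonnegative, so I would reduce to proving $P - n^{d-1} \geq (d-1)!(d-1)$. Expanding $P$ in powers of $n$, its coefficients are the elementary symmetric functions of $\{1, \dots, d-1\}$, all nonnegative, and those of $P - n^{d-1}$ sum to $\prod_{j=1}^{d-1}(1+j) - 1 = d! - 1$; hence for $n \geq 1$ we get $P - n^{d-1} \geq d! - 1 \geq d! - (d-1)! = (d-1)!(d-1)$, the final step using $(d-1)! \geq 1$. The steps are all routine once the Hilbert function is recorded; the only delicate point, and the one I would treat most carefully, is this last inequality, since the two sides of Inequality~\ref{ineq} meet with almost no slack (indeed equality holds throughout when $d \leq 2$), so a crude estimate would not suffice.
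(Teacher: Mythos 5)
Your proof is correct and takes essentially the same route as the paper: both rest on Sally's Hilbert function for a Cohen--Macaulay local ring of minimal multiplicity (your $h$-vector $(1,\eh(R)-1)$ form is, via Pascal's rule, the same as the formula $\mu(\m^n)=\eh(R)\binom{n+d-2}{d-1}+\binom{n+d-2}{d-2}$ that the paper cites), followed by elementary binomial estimates. The paper's bookkeeping is just slightly shorter---it absorbs the $-(d-1)$ using $\binom{n+d-2}{d-2}\geq d-1$ and then uses $(d-1)!\binom{n+d-2}{d-1}\geq n^{d-1}$---which is exactly the slack your bound $P-n^{d-1}\geq (d-1)!(d-1)$ repackages.
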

\begin{proof}
By \cite[Theorem~1]{Sally2} the Hilbert function of $R$ can be written as
\[
\mu (\mf m^n) = \eh(R) \binom {n + d -2}{d - 1} + \binom{n + d -2}{d - 2}. 
\]
Since $n \geq 1$, $\binom{n + d - 2}{d-2} \geq d - 1$, so we have
\[
(d-1)! n\left( \mu(\mf m^n) - d + 1\right) \geq (d-1)! n\eh(R) \binom {n + d -2}{d - 1} \geq n^{d} \eh(R) =  \eh (\mf m^n).
\]
\end{proof}

\begin{remark}
By looking at the Hilbert functions  one can see, as we do below,  that minimal multiplicity is not necessary in dimension at least $3$. 
\end{remark}

Now let us investigate more carefully what happens in higher dimension. For the powers of maximal ideal, the problem immediately reduces to the graded case, so we 
assume $R$ is a standard graded $k$-algebra.  Suppose  that the Hilbert series of $R$ is given by
\[
H_R(t) = \frac{\sum_i a_it^i}{(1 - t)^d} = \left (\sum_{i = 0}^h a_it^i\right) \left( \sum_i \binom {i + d-1}{d - 1}t^i\right),
\]
so $(a_0, a_1, \ldots, a_h)$ is the $h$-vector.
Thus we have
\[
\mu(\mf m^c) = \sum_{i=c -h}^c \binom{i + d - 1}{d - 1}a_{c - i} =  p(c).
\]

Our inequality becomes:
\[
(d-1)!(p(c)-d+1)\geq \eh(R)c^{d-1}
\]
One can calculate the coefficients of  $p(c)$ for $c\gg 0$.  For instance, the first coefficient is $\frac{\sum a_i}{(d-1)!} = \frac{\eh(R)}{(d-1)!}$, so we can see that inequality $1$ is tight asymptotically.  Similarly, we can write the second coefficient (at $c^{d-2}$) of the polynomial $p(c)$ as 
\begin{align*}
&(1+...+(d-1))a_0 + (0 +...+(d-2))a_1  + (-1 +...+(d-3))a_2 + \cdots + ((1 - h) +...+(d-1 - h))a_h\\
&= \sum_{i = 0}^h \left ( \frac{(d - 1)d}{2} - (d - 1)i \right ) a_i.
\end{align*}

So asymptotically, we need this coefficient to be non-negative, etc. We summarize our findings in:

\begin{proposition} 
Let $(R,\m)$ be a local ring and let $(a_0, a_1, \dots, a_h)$ be the $h$-vector of the associated graded ring of $\m$. 
\begin{enumerate}
\item Inequality $1$ holds for all large powers of $\m$ if and only if $\sum_{i = 0}^h \left ( \frac{(d - 1)d}{2} - (d - 1)i \right ) a_i\geq 0$. 
\item Inequality $1$ holds for all large powers of $\m$ if $a_i \geq a_{d-i}$ for all $i\leq \frac{d}{2}$ (this implies that $a_i=0$ for $i>d$). 
\item When $d=3$, inquality $1$ becomes an equality for all large powers of $\m$ if $a_i= a_{3-i}$ for all $i$. 
\end{enumerate}

\end{proposition}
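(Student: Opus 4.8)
The plan is to reduce everything to the explicit polynomial identity for $\mu(\m^c)$ recorded just before the statement and then read off the sign of the relevant coefficient. Since the powers $\m^c$ and all three invariants $\mu,\loewy,\eh$ appearing in Inequality~\ref{ineq} depend only on the associated graded ring $\gr_\m(R)$, I would first pass to the standard graded $k$-algebra with the given $h$-vector $(a_0,\dots,a_h)$; here $a_0 = 1$ and $\sum_i a_i = \eh(R)$. For $I = \m^c$ we have $\loewy(\m^c) = c$ and $\eh(\m^c) = c^d\eh(R)$, so after dividing by $c$ the inequality for $\m^c$ is exactly $D(c) := (d-1)!\bigl(p(c) - d + 1\bigr) - \eh(R)c^{d-1} \geq 0$, where $p(c) = \mu(\m^c) = \sum_{i=0}^h \binom{c-i+d-1}{d-1}a_i$ is, for $c \gg 0$, a polynomial of degree $d-1$.

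The core computation is to expand $(d-1)!\binom{c-i+d-1}{d-1} = \prod_{k=1}^{d-1}(c-i+k)$ and collect the two top coefficients of $(d-1)!\,p(c)$. The $c^{d-1}$ coefficient is $\sum_i a_i = \eh(R)$, which cancels against $\eh(R)c^{d-1}$ in $D(c)$, reflecting the asymptotic tightness already noted. The $c^{d-2}$ coefficient is $S := \sum_{i=0}^h\bigl(\tfrac{(d-1)d}{2} - (d-1)i\bigr)a_i$. Hence $D(c) = S\,c^{d-2} + (\text{lower order terms})$, and for $d \geq 3$ the constant $-(d-1)!(d-1)$ is absorbed into the lower-order part. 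This yields part (1): for large $c$ the sign of $D(c)$ is governed by its leading coefficient $S$, so $D(c) \geq 0$ for all $c \gg 0$ whenever $S > 0$ and fails whenever $S < 0$.

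For part (2), I would rewrite the weight as $\tfrac{(d-1)d}{2} - (d-1)i = (d-1)\bigl(\tfrac{d}{2} - i\bigr)$ and exploit the pairing $i \leftrightarrow d-i$, under which the weights $\tfrac{d}{2}-i$ and $\tfrac{d}{2}-(d-i)$ are exactly opposite. Grouping the sum accordingly (the middle index $i = d/2$, present when $d$ is even, has weight $0$) gives $S = (d-1)\sum_{i < d/2}\bigl(\tfrac{d}{2}-i\bigr)\bigl(a_i - a_{d-i}\bigr)$, provided the $h$-vector is supported on $[0,d]$; each summand is nonnegative under the hypothesis $a_i \geq a_{d-i}$, so $S \geq 0$ and part (1) applies. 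For part (3), with $d = 3$ and $a_i = a_{3-i}$ for all $i$ (forcing $a_i = 0$ for $i > 3$ and $a_0 = a_3$, $a_1 = a_2$), a direct computation of the full polynomial $D(c)$ is warranted: the same pairing gives $S = 0$, while the constant term of $2p(c)$ equals $\sum_i (i-1)(i-2)a_i = 2(a_0 + a_3) = 4a_0 = 4$, which cancels precisely against $(d-1)!(d-1) = 4$. Thus $D(c) \equiv 0$ for $c \gg 0$, i.e. equality.

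The main obstacle is the boundary case $S = 0$ in part (1). When $S = 0$ the top term of $D(c)$ vanishes and the sign for large $c$ is decided by the next coefficient; part (3) is exactly the borderline situation where that next term makes $D(c)$ identically zero. A fully rigorous ``if and only if'' therefore requires care at $S = 0$ (and, as the $d = 2$ case shows, the constant $-(d-1)!(d-1)$ genuinely shifts the threshold there), so I would either restrict the clean statement to the dichotomy $S > 0$ versus $S < 0$ or record explicitly the supplementary lower-order condition needed at $S = 0$. The other point to state carefully is the support hypothesis $a_i = 0$ for $i > d$ used in the pairing of part (2).
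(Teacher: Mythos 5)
Your proposal is correct and is essentially the paper's own proof: the paper's argument is literally ``straightforward from the preceding discussion and some elementary algebra,'' and your reduction to the graded case, the expansion of $(d-1)!\,p(c)$ with leading coefficient $\eh(R)$ and second coefficient $S=\sum_i\bigl(\tfrac{(d-1)d}{2}-(d-1)i\bigr)a_i$, the pairing $i\leftrightarrow d-i$ for part (2), and the constant-term cancellation $\sum_i(i-1)(i-2)a_i-4=0$ for part (3) are exactly that elementary algebra made explicit. Your caveat about the boundary $S=0$ is a genuine imprecision in the paper's statement rather than a gap in your argument (the paper itself hedges with ``non-negative, etc.''), and your proposed fix --- the strict dichotomy $S>0$ versus $S<0$, plus the lower-order condition at $S=0$, together with the support hypothesis $a_i=0$ for $i>d$ in part (2) --- is the right way to make the proposition fully rigorous.
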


\begin{proof}
This is straightforward from the preceding discussion and some elementary algebra. 
\end{proof}

\begin{remark}
When $c$ is small, inequality~\ref{ineq} forces more stringent conditions on the singularity. For example, when $c=1$, we must have 
$$(d-1)!(n-d+1)\geq \eh(R)$$
here $n$ is the embedding dimension of $R$. We do not know if this holds for a large class of singularities. The most reasonable guess would be Gorenstein rational or terminal singularities. Note that similar  bounds have been proven for rational sinularities by Huneke-Watanabe in \cite{HW}. However, for $n$ large those bounds are weaker than the inequality above. One can also use examples from {\it loc. cit.} to see that for our inequality to hold for all integrally closed ideals, assuming $R$ to have rational singularities (but without being Gorenstein) is not enough.   
\end{remark}

\bibliographystyle{plain}
\bibliography{bounds}

\end{document}